%

\documentclass[aop,MSNbibl,seceqn,dvips]{arximspdf}

%

\doi{10.1214/13-AOP878} 
\volume{43}
\issue{1}
\pubyear{2015}
\firstpage{286}
\lastpage{338}

\makeatletter
\def\triangle{\Delta}

\newtheorem{theorem}{Theorem}[section]
\newtheorem{lemma}[theorem]{Lemma}
\newtheorem{proposition}[theorem]{Proposition}

\newproclaim{definition}[theorem]{Definition}
\newproclaim{remark}[theorem]{Remark}

\newcommand{\mc}[1]{{\mathcal #1}}

\newcommand{\bb}[1]{{\mathbb #1}}

\newcommand{\Z}{\mathbb Z}
\newcommand{\E}{\mathbb E}
\newcommand{\R}{\mathbb R}
\newcommand{\N}{\mathbb N}
\renewcommand{\S}{\mathbb S}
\newcommand{\Y}{\mathcal{Y}}
\newcommand{\M}{\mathcal{M}}
\newcommand{\I}{\mathcal{I}}
\newcommand{\B}{\mathcal{B}}
\newcommand{\D}{\mathcal{D}}
\newcommand{\C}{\mathcal{C}}
\newcommand{\A}{\mathcal{A}}

\renewcommand{\P}{\mathbb P}
\newcommand{\G}{\mathcal{G}}

\newcommand{\W}{\mathcal{W}}
\newcommand{\K}{\mathcal{K}}
\newcommand{\HW}{\widetilde{H}}
\renewcommand{\d}{w}

\newcommand{\eqref}[1]{(\ref{#1})}
\makeatother

\begin{document}
\begin{frontmatter}

\title{A stochastic Burgers equation from a class of microscopic interactions}
\runtitle{Stochastic Burgers from microscopic interactions}

\begin{aug}
\author[A]{\fnms{Patr{\'\i}cia} \snm{Gon\c{c}alves}\ead[label=e1]{patricia@mat.puc-rio.br}\thanksref{T1}},
\author[B]{\fnms{Milton} \snm{Jara}\ead[label=e2]{mjara@impa.br}}
\and
\author[C]{\fnms{Sunder} \snm{Sethuraman}\corref{}\ead[label=e3]{sethuram@math.arizona.edu}\thanksref{T2}}
\runauthor{P. Gon\c{c}alves, M. Jara and S. Sethuraman}
\thankstext{T1}{Supported by FCT research project ``Nonequilibrium
Statistical Physics'' PTDC/\break MAT/109844/2009 and PEst-C/MAT/UI0013/2011.}

\thankstext{T2}{Supported in part by NSF DMS-11-59026.}
\affiliation{PUC-RIO and Universidade do Minho, IMPA and University of Arizona}
\address[A]{P. Gon\c{c}alves\\
Departamento de Matemitica\\
UC-RIO\\
Rua Marquzs de Sco Vicente\\
no. 225, 22453-900\\
Givea, Rio de Janeiro\\
Brazil\\
and\\
CMAT\\
Centro de Matem\'atica\\
Universidade do Minho\\
Campus de Gualtar\\
4710-057 Braga\\
Portugal \\
\printead{e1}}
\address[B]{M. Jara\\
IMPA\\
Estrada Dona Castorina, 110\\
Horto, Rio de Janeiro\\
Brasil\\
\printead{e2}}
\address[C]{S. Sethuraman\\
Department of Mathematics\\
University of Arizona\\
617 N. Santa Rita Ave.\\
Tucson, Arizona 85721\\
USA\\
\printead{e3}}
\end{aug}

\received{\smonth{9} \syear{2012}}
\revised{\smonth{7} \syear{2013}}

%
\begin{abstract}
We consider a class of nearest-neighbor weakly asymmetric mass
conservative particle systems evolving on $\mathbb{Z}$, which includes
zero-range and types of exclusion processes, starting from a
perturbation of a stationary state. When the weak asymmetry is of order
$O(n^{-\gamma})$ for $1/2<\gamma\leq1$, we show that the scaling limit
of the fluctuation field, as seen across process characteristics, is a
generalized Ornstein--Uhlenbeck process. However, at the critical weak
asymmetry when $\gamma= 1/2$, we show that all limit points
satisfy a martingale formulation which may be interpreted in terms of a
stochastic Burgers equation derived from taking the gradient of the KPZ
equation. The proofs make use of a sharp ``Boltzmann--Gibbs'' estimate
which improves on earlier bounds.
\end{abstract}

%
\begin{keyword}[class=AMS]
\kwd[Primary ]{60K35}
\kwd[; secondary ]{82C22}
\end{keyword}
\begin{keyword}
\kwd{KPZ equation}
\kwd{Burgers}
\kwd{weakly asymetric}
\kwd{zero-range}
\kwd{kinetically constrained}
\kwd{speed-change}
\kwd{fluctuations}
\end{keyword}

\end{frontmatter}

\section{Introduction}
\label{intro_section}
There has been much recent work on the classification of fluctuations
of certain interfaces and currents, corresponding to mass conservative
particle dynamics in one-dimensional
nearest-neighbor interacting particle systems such as simple exclusion
and its variants, with respect to so-called Edwards--Wilkinson (EW) and
Kardar--Parisi--Zhang (KPZ) classes (cf. \cite{Corwinreview} for a
review and references).
Following recent sensibilities, a $d=1$ particle system is in the EW
class if the standard deviation of the associated ``height'' function
$h_t(x)$ of the interface at time $t$ and space point $x$, or the
integrated current at time $t\geq0$ across the space point $x\in\R$,
is of order $t^{1/4}$, and also spatial correlations are nontrivial at
range $t^{1/2}$. Examples in this class are independent random walk
systems, random averaging and reversible simple exclusion processes
starting from a stationary state or even in nonstationary states \cite
{Balazs-R-S,Ferrari-Fontes,Jara-Landim,Kumar,Seppalainen}.

On the other hand, a system is in the KPZ class if its ``height''
function and integrated current have standard deviation of order
$t^{1/3}$, and nontrivial spatial correlations at range $t^{2/3}$. A
well-studied particle system model in this class is the asymmetric
simple exclusion process starting from deterministic initial
configurations such as step profile and alternating conditions, or from
a stationary state (cf.
\cite{BDJ,Baik-Rains,Balazs-Seppalainen1,Balazs-Seppalainen,BCS,BFS,Ferrari-Spohn,Lee,Praehofer-Spohn,Quastel-Valko,TWreview} and references therein).

These two classes can be seen in the study of the famous KPZ stochastic
partial differential equation first mentioned in \cite{KPZ}:
%
\begin{equation}
\label{KPZ_eqn} \partial_t h_t(x) = D\Delta
h_t(x) + a \bigl(\nabla h_t(x) \bigr)^2 +
\sigma\dot\W_t(x),
\end{equation}
where $\dot\W_t(x)$ is a space--time
white noise with unit variance. When $a=0$ and
$D,\sigma> 0$, then $h_t(x)$ is a generalized Ornstein--Uhlenbeck
process in EW class. However, when
$a\neq0$ and $D, \sigma> 0$, a physical argument indicates
that
$h_t(x)$ is in the KPZ class (cf. \cite{BKS,KPZ}). Also, in
another sense, it has been shown that the ``Cole--Hopf'' solution of the
KPZ equation, starting from certain initial conditions, interpolates
between the two classes when the centered solution is examined in
different asymptotic scaling regimes, that is, when normalized by
$t^{1/3}$ as $t\uparrow\infty$ or when normalized by $t^{1/4}$ as
$t\downarrow0$, nontrivial limits are obtained (cf. \cite{ACQ,BQS}).

Moreover, it is believed that in many ``critical'' weakly asymmetric,
$d=1$ particle systems, that is, when the weak asymmetry is scaled at a
critical level, the diffusively scaled ``height'' function or integrated
current should converge to the solution of the KPZ equation with
parameters depending on the structure of particle interactions and
initial conditions. Recently, much progress has been made in making
clear this convergence. Part of the difficulty is that, since
``solutions'' to the KPZ equation are expected to be
distribution-valued, the nonlinear term in the equation does not make
sense, and so the equation is ill-posed. Hence, what does it mean to
solve the KPZ equation and also, when properly interpreted, how to
derive the KPZ equation from microscopic particle interactions?

One way to approach these questions is to observe that the Cole--Hopf
transformation $z_t(x): = \exp\{(a/D)h_t(x)\}$ linearizes the KPZ
equation to a stochastic heat equation
%
\begin{equation}
\label{linearization} \partial_t z_t(x) = D\Delta
z_t(x) + (a\sigma/D) z_t(x)\dot\W_t(x),
\end{equation}
which can be solved uniquely starting from a class of initial
conditions and is also strictly positive for times $t>0$ \cite
{Mueller,Walsh}. Then the ``Cole--Hopf'' solution is defined as
$h_t(x):= \log z_t(x)$. In \cite{BG}, starting from near stationary
measures in a certain weakly asymmetric simple exclusion process
observed in diffusive scale, this sentiment was made rigorous. Namely,
it was proved that the microscopic Cole--Hopf transform of the
microscopic height function, using a clever device in~\cite{Gartner}
which linearizes the simple exclusion dynamics to a more manageable
system, converges to the Cole--Hopf transform of the KPZ equation, the
solution to the stochastic
heat equation \eqref{linearization}. More recently, in \cite{ACQ,Sasamoto-Spohn} this notion of solution further gained traction in
that the result in \cite{BG} was nontrivially generalized to step
profile deterministic initial configurations. At the same time, in \cite
{Hairer}, it has been shown that $\log z_t(x)$ is the unique solution
of a well-posed equation on a torus,
derived from a ``rough paths'' approximation of \eqref{KPZ_eqn}, so that
it is clear what sort of KPZ equation the ``Cole--Hopf'' solution
actually solves.

In this article, another approach is considered which allows to
generalize the types of microscopic particle interactions considered,
given that the device in \cite{Gartner} seems limited to simple
exclusion and a few variants such as $q$-TASEP dynamics~\cite
{Borodin-Corwin}. At the microscopic level, the height function
$H_t(x)$, evaluated for $t\geq0$ and $x\in\Z$, takes form
%
\begin{equation}
\label{current-relation} H_t(x) = \cases{ %
\displaystyle J_0(t) - \sum_{y=0}^{x-1}
\eta_t(y),& \quad$\mbox{for }x\geq1,$
\vspace*{2pt}\cr
J_0(t), &\quad $\mbox{for }x=0,$
\vspace*{2pt}\cr
\displaystyle J_0(t) + \sum_{y=x}^{-1}
\eta_t(y), & \quad$\mbox{for } x\leq-1,$}
\end{equation}
where $J_y(t)$ is the current across bond $(y-1,y)$ and $\eta_t(y)$ is
the particle number at $y\in\Z$ at time $t\geq0$. Then the discrete
gradients of the microscopic height function are the particle numbers,
$H_t(x+1) - H_t(x) = \eta_t(x)$, and the corresponding fluctuation
field examined in diffusive scale, that is, when time is scaled in
terms of $n^2$ and space is scaled by $n$, is the particle density
fluctuation field $\Y^n_t$. The guiding idea is that $\Y^n_t$ should
converge to $\Y_t = \nabla h_t$ in some sense.

Formally, by carrying through the ``$\nabla$'' operation, $\Y_t$
satisfies a type of stochastic Burgers equation,
%
\begin{equation}
\label{conservative_eqn} \partial_t\Y_t(x) = D\Delta
\Y_t(x) + a\nabla \bigl(\Y_t(x) \bigr)^2 +
\sigma\nabla\dot\W_t(x),
\end{equation}
which again for the same reasons as for the original KPZ equation is
ill-posed when $a\neq0$. If $a=0$, however, it is a type of
Ornstein--Uhlenbeck equation which possesses a unique solution when
starting from a large class of initial distributions (cf. \cite{BC,Walsh}).

A main contribution of the article is to understand the derived
stochastic Burgers equation \eqref{conservative_eqn}
in the context of
a general class of nearest-neighbor weakly asymmetric interacting
particle systems on $\Z$, starting from perturbations of the invariant
measure $\nu_\rho$. This class is composed of systems with ``gradient''
dynamics, not necessarily product invariant measures, sufficient
spectral gap and ``equivalence of ensembles'' estimates among other
technical conditions (cf. Section~\ref{notation}), which include in
particular the already studied simple exclusion process, and also
zero-range and exclusion models with kinetically constrained or
speed-change interactions, which have varying and sometimes slow mixing
behaviors. The initial distributions consist of ``bounded entropy''
perturbations of the invariant measure~$\nu_\rho$ (cf. Section~\ref{notation} for a precise statement).

Our results describe the limit points of the fluctuation field $\Y
^{n,\gamma}_t$ in diffusive scale, in a reference frame moving with a
process characteristic velocity $\upsilon_n(t) \sim n^{-1}\lfloor
n^2(p_n-q_n)\upsilon t\rfloor$. Here, $p_n-q_n$ is the difference of
the single particle jump rates which identifies the strength of the
weak asymmetry considered, and $\upsilon$ is a homogenized velocity
parameter depending on the particle dynamics. Given the size of
$p_n-q_n$, a dichotomy emerges in the form of the limits derived.
Namely, for $p_n-q_n = O(n^{-\gamma})$, when $1/2<\gamma\leq1$, we
show a ``crossover result'' (Theorem~\ref{th crossover}) that $\Y
^{n,\gamma}_t$ converges to an Ornstein--Uhlenbeck field
with certain homogenized parameters. When $\gamma=1$, convergence of
$\Y^{n,\gamma}_t$ to the same Ornstein--Uhlenbeck field has been known
for many particle systems since the work \cite{BR}. For discussions of
``crossover'' results with respect to simple exclusion, see \cite
{Sasamoto-Spohn-crossover,GJ5}.

However, when $\gamma= 1/2$, a critical value, we prove (Theorem~\ref
{th kpz scaling}) that limit points of $\Y^{n,\gamma}_t$
satisfy a martingale formulation, which we dub as an ``energy''
formulation (cf. Theorem~\ref{th kpz scaling}),
also with homogenized constants, which interprets the stochastic
Burgers equation: Namely, the nonlinear term in \eqref
{conservative_eqn} is understood in terms of a certain Cauchy limit of
a function of the fluctuation field acting on an approximation of a
point mass as the approximation becomes more refined.
We remark, however, with respect to simple exclusion processes,
convergence of $\Y^{n,\gamma}_t$ to a unique limit when $\gamma=1/2$ is
already known, and this limit is understood in the ``Cole--Hopf'' sense
as mentioned above \cite{BG}. Therefore, our results imply that the
``Cole--Hopf'' limit of the fluctuation field satisfies also our ``energy''
formulation. In this context, we note \cite{Assing} further clarifies the
``energy'' formulation of the simple exclusion limit starting from the
invariant state $\nu_\rho$
(cf. point $2$ of Remark~\ref{kpz_remark}).
Also, we note another
martingale formulation
was given with respect to the Burgers equation in \cite{Assingmartingale}.

In our general framework, convergence of $\Y^{n,\gamma}_t$ to a unique
limit when $\gamma=1/2$
has not been shown,
an important open question (cf. Remark~\ref{kpz_remark}).
However, one may still try to characterize limit points of the height
function across process characteristics,
$H^{n,\gamma}_t(x):= n^{-1/2}H_{n^2t} (nx-n\upsilon_n(t) )$, via
\eqref{current-relation} given subsequential convergence of $\Y
^{n,\gamma}_t$. Although this is not the purpose of this paper, we
indicate how this might be accomplished to be more complete. Indeed, by
\eqref{current-relation} and $J_0(t) -J_{x}(t) = \sum_{y=0}^{x-1}
(\eta_t(y) - \eta_0(y) )$, one has $H^{n,\gamma}_t(x) =
n^{-1/2}J_{nx-n\upsilon_n(t)}(n^2t) - n^{-1/2}\sum_{y=0}^{nx-n\upsilon
_n(t)-1} \eta_0(y)$, say for $x>\upsilon_n(t)$. To write the current in
terms of the fluctuation field, formally, $n^{-1/2}J_{ nx-n\upsilon
_n(t)}(n^2t) = \Y^{n,\gamma}_t(1_{[x,\infty)}) - \Y^{n,\gamma
}_0(1_{[x,\infty)}) + o(1)$, although as there are an infinite number
of particles and $1_{[x,\infty)}$ is not a compactly supported function
some sort of truncation is needed to make a rigorous argument. Using
the method in \cite{Rost-Vares} and \cite{Jara-Landim}, one can
approximate $n^{-1/2}J_{nx-n\upsilon_n(t)}(n^2t)$ by $\Y^{n,\gamma
}_t(G_{k,x}) - \Y^{n,\gamma}_0(G_{k,x})$ for large $k$ where
$G_{k,x}(z) = (1-(z-x)_+/k)_+$, and so it is possible to take
subsequential limits of $H^{n,\gamma}_t$.

Finally, we remark if uniqueness of solution for the $\gamma=1/2$
``energy''
formulation were known in our more general framework, one would be able
to identify the solution, modulo parameters, as the limit already
identified for simple exclusion through the Cole--Hopf apparatus. In
this way, one should be able to determine that the height function
limits, with respect to a general class of interactions starting from
nearly the invariant measure, are in the KPZ class for instance.

We now remark on the argument for Theorems \ref{th crossover} and \ref
{th kpz scaling}. We take a stochastic differential of $\Y^{n,\gamma
}_t$, namely
\[
d\Y^{n,\gamma}_t = \bigl(\partial_t
\Y^{n,\gamma}_t + L_n \Y ^{n,\gamma}_t
\bigr) \,dt + d\M^{n,\gamma}_t,
\]
where $L_n$ is the system infinitesimal generator and $\M^{n,\gamma}_t$
is a martingale. We note, because the reference frame moves with
velocity $\upsilon_n(t)$, the term $\partial_t \Y^{n,\gamma}_t$ does
not vanish. Beginning in a perturbed invariant measure, the martingale
term can be handled by an ergodic theorem. However, to write the drift term
$\partial_t \Y^{n,\gamma}_t + L_n \Y^{n,\gamma}_t$, in terms of the
fluctuation field itself and, therefore, ``close'' the equation, is a
more difficult task, and requires what has been known as a
``Boltzmann--Gibbs'' principle. Such a principle was first proved in \cite
{BR} when $\gamma=1$. In our context, we would like to recover a
second-order term, and the principle would replace
\[
\int_0^t \frac{1}{n^{\gamma- 1/2}}\sum
_{x\in\Z} \nabla G(x/n)\tau_x V(
\eta_{n^2s}) \,ds
\]
with
\begin{eqnarray*}
&&\frac{\varphi''_{V}(\rho)}{2}\int_0^t
\frac{1}{n^{\gamma+ 1/2}}\sum_{x\in\Z} \nabla G(x/n)
\\
&&\hspace*{23pt}\qquad{} \times \biggl\{\Y^{n,\gamma}_s \biggl(\frac
{1}{2\varepsilon}1_{[x-\varepsilon,x+\varepsilon]}
\biggr)^2 - \E_{\nu_\rho} \biggl[\Y^{n,\gamma}_s
\biggl(\frac{1}{2\varepsilon
}1_{[x-\varepsilon,x+\varepsilon]} \biggr)^2 \biggr] \biggr
\}\,ds
\end{eqnarray*}
in $L^2(\mathbb{ P}_{\nu_\rho})$ as $n\uparrow\infty$ and $\varepsilon
\downarrow0$.
Here, $G$ is a function in the Schwarz class, $\tau_x$ is the $x$-shift
operator, $V$ is a mean-zero function
with the property that the derivative of its ``tilted mean'' $\varphi
_{V}(z)$ vanishes at $z=\rho$ [cf. definition near \eqref
{varphi_derivatives}]. Given such a replacement principle (cf.
Section~\ref{BG_statement} for precise statements), one can prove
the sequence $\Y^{n,\gamma}_t$ is tight and derive martingale
formulations of limit points as desired.

The case $\gamma=1/2$ is the most difficult since there is no spatial
averaging at all. However, there is much cancelation with respect to
the time integral which helps to prove the estimate needed. We show the
cases $1/2<\gamma\leq1$ would follow from the $\gamma= 1/2$
replacement. A similar replacement for symmetric simple exclusion,
using specific duality methods, was performed in \cite{Assingreplacement}.

The method given here, in our general framework, is quite different.
The main idea is to use an involved $H_{-1}$ renormalization scheme to
bound errors in the replacement. Such a scheme makes good use of three
assumed ingredients [cf. precise statements (R), (G), (EE) in
Section~\ref{notation}]: First, the measure $\nu_\rho$ is invariant
with respect to all asymmetric and symmetric versions of the process,
the main reason for the ``gradient dynamics'' condition. Second, a
spectral gap lower bound for the symmetric process localized on a
interval $\Lambda_\ell$ with width $\ell$ and $\sum_{x\in\Lambda_\ell
}\eta(x)$ particles which, after averaging with respect to $\nu_\rho$,
is of order $O(\ell^{-2})$. Also, third, an ``equivalence of ensembles''
estimate holds with respect to canonical $\nu_\rho(\cdot| \sum_{|x|\leq\ell}\eta(x) = k)$ and grand canonical $\nu_\rho$ measures.

We note the current article is an evolution of the arXiv paper \cite
{GJ6}, encompassing the work there on a type of exclusion model
starting from a
Bernoulli product invariant measure and a model specific Boltzmann--Gibbs
principle. See also \cite{Assing} for a different type of resolvent
method specific to simple exclusion.
In this context, the current article is a nontrivial generalization to
more diverse
models, starting from perturbations of the stationary state, using a
more general $H_{-1}$ renormalization scheme. We remark that part of
this improvement, of its own interest, is that the
Boltzmann--Gibbs principle (Theorem~\ref{gbg_L2}) shown here does not
rely on
the independence structure of a product measure, or on a sharp spectral
gap estimate, or on a process ``duality.''
Finally, we note
elements of our $H_{-1}$ renormalization scheme go back to \cite{G.}
and \cite{SX} in different contexts.

We now give the structure of the article. In Section~\ref{notation-results}, the general class of models studied, results and
specific systems satisfying the class assumptions are discussed. Then,
in Section~\ref{proofs}, we outline the proof of the main results,
Theorems \ref{th crossover} and \ref{th kpz scaling}, stating the form
of Boltzmann--Gibbs principle used. In Section~\ref{BG}, this principle
is proved. Finally, in Section~\ref{EE_section}, we prove for a class
of systems, including the specific processes discussed in Section~\ref{notation-results}, the ``equivalence of ensembles'' estimate assumed for
the proofs in Section~\ref{proofs}.

\section{Abstract framework,
results and models}
\label{notation-results}
We now discuss the abstract framework we work with in Section~\ref{notation}, and state results in this framework in Section~\ref{results}. This framework covers a wide class of models such as
zero-range models and different types of exclusion processes which we
detail in Sections~\ref{zero-range}--\ref{speed_change_model}. A
reader focusing on one of these models, might skip to its subsection
while referring to Section~\ref{notation}, and then proceed to
results in Section~\ref{results}.

\subsection{Notation and assumptions}
\label{notation}

We consider a sequence of ``weakly asymmetric'' nearest-neighbor ``mass
conservative'' particle systems $\{\eta^n_t\dvtx t\geq0\}$ on the state
space $\Omega= \N_0^\Z$ where $\N_0=\{0,1,2,\ldots\}$. The
configuration of the system $\eta_t = \{\eta_t(x)\dvtx x\in\Z\}$ is a
collection of occupation numbers $\eta_t(x)$ which counts the numbers
of particles at sites $x\in\Z$ at time $t\geq0$. In some of the
examples we will consider, the occupation number is bounded by $1$, in
which case
the effective state space reduces to $\{0,1\}^\Z$.

\textit{``Gradient'' dynamics.}
The dynamics will be of ``gradient'' type. That is, we suppose there are
functions $\{b^{R,n}_x\}_{n\geq1}$ and $\{b^{L,n}_x\}_{n\geq1}$
satisfying the following conditions (R1) and (R2). Let $\tau_x$ be the
shift operator where $(\tau_x\eta)(z) = \eta(x+z)$ and $\tau_x f(\eta)
= f(\tau_x\eta)$ for $x\in\Z$. Let also $\Lambda_k = \{j\dvtx |j|\leq k\}
\subset\Z$ for $k\geq1$.
\begin{longlist}[(R1)]
\item[(R1)] For all $n\geq1$, $b^{R,n}_x=\tau_x b_0^{R,n}$ and
$b^{L,n}_x=\tau_x b^{L,n}_0$ are nonnegative finite-range functions on
$\Omega$ such that $b_0^{R,n}$ and $b_0^{L,n}$ are supported on $\{\eta
(y)\dvtx y\in\Lambda_R\}$ for some $R\geq1$. We suppose uniformly in $n$ that
$|b_0^{R,n}(\eta)| + |b_0^{L,n}(\eta)| \leq C\sum_{y\in\Lambda_R}\eta
(y)$. Moreover, there are nonnegative functions $c^n_x = \tau_x c^n$ on
$\Omega$, supported on $\{\eta(x)\dvtx x\in\Lambda_R\}$ such that
\[
b^{R,n}_x(\eta) - b^{L,n}_x(\eta)
= c_x^n(\eta) -c_{x+1}^n(\eta).
\]
In addition, suppose there are fixed functions $b_0^R$, $b_0^L$ and
$c_0$ such that configuration-wise
\[
\lim_{n\uparrow\infty} b_0^{R,n}(\eta) =
b_0^R(\eta), \qquad\lim_{n\uparrow\infty}
b_0^{L,n}(\eta)= b_0^L(\eta)\quad \mbox{and} \quad \lim_{n\uparrow\infty} c_0^n(\eta)=
c_0(\eta).
\]
\end{longlist}

In some of the models considered, such as zero-range processes in
Section~\ref{zero-range}, the functions $b_0^{R,n} = b_0^R$,
$b_0^{L,n}=b_0^L$ and $c_0^n=c_0$ are fixed and do not depend on the
parameter $n$. However, for the kinetically constrained exclusion
models in Section~\ref{porous}, the rates do depend on $n$.

\begin{longlist}[(R2)]
\item[(R2)] With respect to a fixed measure $\nu_\rho$ on $\Omega$,
for all $n\geq1$, we have
\[
b_x^{R,n}\bigl(\eta^{x+1,x}\bigr)
\frac{d\nu_\rho^{x+1,x}}{d\nu_\rho}(\eta) = b_x^{L,n}(\eta),
\]
where $\nu_\rho^{x+1,x}$ is the measure of the variable $\zeta= \eta
^{x+1,x}$ under $\nu_\rho$.
\end{longlist}
We also define $b^n_x(\eta) = b_x^{R,n}(\eta) + b^{L,n}_x(\eta)$,
$b^n(\eta) = b^n_0(\eta)$ and $c^n(\eta) = c^n_0(\eta)$ to simplify notation.

We now specify the process generator. For $a\in\R$ and $\gamma>0$, let
\[
p_n = \frac{1}{2} + \frac{a}{2n^\gamma} \quad\mbox{and}\quad
q_n = 1-p_n = \frac{1}{2}- \frac{a}{2n^\gamma}.
\]
Let also $n_0$ be such that $0\leq p_{n_0},q_{n_0}\leq1$, and $T>0$ be
a fixed time.

\begin{longlist}[(M)]
\item[(M)] Suppose, for each $a\in\R$, $\gamma>0$ and $n\geq n_0$,
that $\{\eta^n_t\dvtx t\in[0,T]\}$ is a $L^2(\nu_\rho)$ Markov process
with strongly continuous Markov semigroup $P_t^n$ and Markov generator
$L_n$ (cf. Chapter I; Section IV.4 of \cite{Liggett}) with a core
composed of local $L^2(\nu_\rho)$ functions on which
\end{longlist}
%
\begin{equation}
\label{generator} L_n f(\eta) = n^2\sum
_{x\in\Z} \bigl\{b^{R,n}_x(
\eta)p_n \nabla _{x,x+1}f(\eta) + b^{L,n}_x(
\eta)q_n\nabla_{x+1,x}f(\eta) \bigr\},
\end{equation}
where
$\nabla_{x, y}f(\eta) = f(\eta^{x,y})-f(\eta)$, and
$\eta^{x,y}$ is the configuration obtained from $\eta$ by moving a
particle from $x$ to $y$:
\[
\eta^{x,y}(z) = \cases{ %
\eta(y)+1, & \quad$\mbox{when } z=y,$
\vspace*{2pt}\cr
\eta(x)-1,& \quad$\mbox{when } z=x,$
\vspace*{2pt}\cr
\eta(z), & \quad$\mbox{otherwise.}$}
\]
The role of $a\in\R$ and $\gamma>0$ is to control the strength of the
``weak asymmetry'' in the model.

\textit{Invariant measure $\nu_\rho$.}
We now specify some technical properties which $\nu_\rho$ should
satisfy. Define for a probability measure $\kappa$, the path measure $\P
_\kappa$ governing the process $\{\eta^n_t\dvtx t\in[0,T]\}$ with initial
configurations $\eta_0$ distributed according to~$\kappa$. Let then
$E_\kappa$ and $\E_\kappa$ denote expectations with respect to $\kappa$
and $\P_\kappa$, respectively.

\begin{longlist}[(IM1)]
\item[(IM1)]
Suppose $\nu_\rho$ is a translation-invariant measure which is
``spatially mixing.'' That is, for local $L^2(\nu_\rho)$ functions $f$
and $h$,
\[
\lim_{|x|\uparrow\infty}E_{\nu_{\rho}}\bigl[f(\eta)\tau_x
h(\eta)\bigr] = E_{\nu_\rho}[f]E_{\nu_\rho}[h].
\]
In addition, suppose the mean $E_{\nu_\rho}[\eta(0)]=\rho$, and
moment-generating function $E_{\nu_\rho}[e^{\lambda\eta(0)}]<\infty$
for $|\lambda|\leq\lambda^*$ for a $\lambda^*>0$.
\end{longlist}
Although product measures $\nu_\rho$ are considered in most of the
examples, we note, in Section~\ref{speed_change_model}, a nonproduct
measure $\nu_\rho$ corresponding to an exponentially mixing ergodic
Markov chain is used.

Now, the measure $\nu_\rho$, by (IM1) and the ``gradient dynamics''
conditions (R1) and (R2), is an invariant measure with respect to $L_n$
for all $a\in\R$ and $\gamma> 0$. Indeed, let $\phi$ be a local
$L^2(\nu_\rho)$ function supported with respect to sites in $\Lambda
_k$. Then, for $\ell>k$, we have
\begin{eqnarray*}
E_{\nu_\rho}[L_n\phi] & = & -n^2E_{\nu_\rho}
\biggl[\sum_{|x|\leq\ell} (p_n-q_n)
\phi(\eta) \bigl[c^n_x(\eta)-c^n_{x+1}(
\eta) \bigr] \biggr]
\\
&=& -n^2(p_n-q_n)E_{\nu_\rho} \bigl[
\phi(\eta) \bigl(c^n_{-\ell}(\eta) - c^n_{\ell+1}(
\eta) \bigr) \bigr].
\end{eqnarray*}
The limit as $\ell\uparrow\infty$ vanishes, by translation-invariance
and the spatial mixing assumption in (IM1).

One can also compute that the $L^2(\nu_\rho)$ adjoint $L^*_n$ is the
generator with parameter $-a$, that is, when the jump probability is
reversed. Define $S_n = (L_n + L^*_n)/2$. Then the Dirichlet form
$D_{\nu_\rho,n}(f):= E_{\nu_\rho}[f(-L_n f)] =\break E_{\nu_\rho}[f(-S_n
f)]$ on local $L^2(\nu_\rho)$ functions is given by
%
\begin{equation}
\label{dirichlet_form} D_{\nu_\rho}(f) = \frac{1}{2}\sum
_{x\in\Z} E_{\nu_\rho} \bigl[ b^{R,n}_x(
\eta) \bigl(\nabla_{x,x+1}f(\eta) \bigr)^2 \bigr].
\end{equation}
Moreover, when $a=0$, $S_n$ is the generator of the associated process
and $\nu_\rho$ is a reversible measure.

Consider now the empirical measure
\[
\Y^n_0 = \frac{1}{\sqrt{n}}\sum
_{x\in\Z}\bigl(\eta(x)-\rho\bigr)\delta_{x/n}
\]
and its covariance under measure $\kappa$, on compactly supported functions,
\[
\C^n_\kappa(G,H) = E_{\kappa} \bigl[ \bigl(
\Y_0^n(G) - E_\kappa \bigl[\Y
_0^n(G) \bigr] \bigr) \bigl(\Y_0^n(H)
- E_\kappa \bigl[\Y_0^n(H) \bigr] \bigr)
\bigr].
\]

\begin{longlist}[(IM2)]
\item[(IM2)] We assume, starting from $\nu_\rho$, that $\Y^n_0$
converges weakly to a spatial Gaussian process with covariance
$\C_{\nu_\rho}(G,H):= \lim_{n\uparrow\infty} \C^n_{\nu_\rho}(G,H)$
such that $\C_{\nu_\rho}(G,G) \leq C(\rho)\|G\|^2_{L^2(\R)}$.
Also, suppose the moment bound holds
$\sup_{\ell\geq1} \| (\frac{1}{\sqrt{\ell}}\sum_{x=1}^\ell(\eta
(x)-\rho) )^2 \|_{L^4(\nu_\rho)} < \infty$.
\end{longlist}
It will be convenient to define the variances
\[
\sigma^2_n(\rho):= \C^n_{\nu_\rho}(H,H)
= E_{\nu_\rho} \biggl[ \biggl(\frac{1}{\sqrt{2n+1}}\sum
_{x\in\Lambda_n} \bigl(\eta(x)-\rho\bigr) \biggr)^2 \biggr]
\]
and $\sigma^2(\rho) = \C_{\nu_\rho}(H,H) = \lim_{n\uparrow\infty}\sigma
^2_n(\rho)$ when $H(x) = 1_{[-1,1]}(x)$.

When $\nu_\rho$ is sufficiently mixing, the case of our examples, (IM2)
holds with $\C_{\nu_\rho}(G,H) = \sigma^2(\rho)\langle G, H\rangle
_{L^2(\R)}$.

Now, for $\lambda\in(-\lambda^*, \lambda^*)$, consider the tilted
measure $\nu^\lambda_\rho$ with ``tilt'' or ``chemical potential'' $\lambda
$ given by its finite-dimensional projections
%
\begin{equation}
\label{tilted_measure} \qquad\frac{d\nu^\lambda_\rho}{d\nu_\rho} \bigl(\eta(x) = e(x), x\in\Lambda
_\ell | \eta(x) = \xi(x), x\notin\Lambda_\ell \bigr) =
\frac
{e^{\lambda\sum_{x\in\Lambda_\ell}(e(x) - \rho)}}{Z(\lambda,\ell,\xi)},
\end{equation}
where $e,\xi\in\Omega$ and $Z(\lambda,\ell,\xi)$ is the normalization.

\begin{longlist}[(D1)]
\item[(D1)] We will assume the measures $\{\nu^\lambda_\rho\dvtx  |\lambda
|<\lambda^*\}$ are well defined on $\Omega$, that is a limit of \eqref
{tilted_measure} as $\Lambda_\ell\nearrow\Z$ can be taken, not
depending on $\xi$. Also, we assume that the measures can be indexed by
density, that is, $E_{\nu_\rho^\lambda}[\eta(0)]$ is strictly
increasing in $\lambda$ for $|\lambda|\leq\lambda^*$.
\end{longlist}
These assumptions hold when $\nu_\rho$ is a nontrivial product measure
satisfying (IM1): $\frac{d}{d\lambda}E_{\nu_\rho^\lambda}[\eta(0)] =
E_{\nu_\rho^\lambda} [ (\eta(0)-E_{\nu_\rho^\lambda}[\eta(0)]
)^2 ]>0$. They also hold when $\nu_\rho$
corresponds to the ergodic Markov chain in the case for the exclusion
with speed-change model (cf. details in Section~\ref{speed_change_model}).

The measures $\{\nu^\lambda_\rho\dvtx  |\lambda|<\lambda^*\}$ are
translation-invariant since $\nu_\rho$ is assumed translation-invariant
(IM1). Also, given exponential moments of $\nu_\rho$ (IM1), $E_{\nu
^\lambda_\rho}[\eta(0)]$ is continuous in $\lambda$ for $|\lambda
|<\lambda^*$. Hence, by the strict increasing assumption in (D1), one
can reparameterize $\{\nu^\lambda_\rho\}$ in terms of density: Let
$z\in(\rho_*, \rho^*)$ where $\rho_* = \lim_{\lambda\downarrow-\lambda
^*}E_{\nu^\lambda_\rho}[\eta(0)]$ and $\rho^* = \lim_{\lambda\uparrow
\lambda^*}E_{\nu^\lambda_\rho}[\eta(0)]$. Let $\lambda(z)\in(-\lambda
^*, \lambda^*)$ be the parameter such that $E_{\nu^{\lambda(z)}_\rho
}[\eta(0)]=z$. Then we will
define\vspace*{-1pt} $\nu_z = \nu^{\lambda(z)}_\rho$.\vspace*{1pt}

Define also, for a local $L^2(\nu_\rho)$ function $f$, the ``tilted
mean'' function
$\varphi_f(z) \dvtx  (\rho_*,\rho^*) \rightarrow \R$ where
\[
\varphi_f(z) = E_{\nu_z} \bigl[f(\eta) \bigr].
\]

We consider the derivatives of $\varphi_f(z)$ as the formal limits of
the derivatives of $E_{\nu_z}[f(\eta)|\eta(x) = \xi(x), x\in\Lambda
_\ell]$ as $\ell\uparrow\infty$. Define
\[
\varphi'_f(z) :=  \lambda'(z)E_{\nu_z}
\biggl[\bigl(f(\eta)-E_{\nu_z}[f]\bigr) \biggl(\sum
_{x\in\Z}\bigl(\eta(x) - z\bigr)\biggr)\biggr],
\]
%
\begin{eqnarray}
\label{varphi_derivatives} \varphi''_f(z) &:=&
\bigl(\lambda'(z)\bigr)^2 E_{\nu_z}\biggl[
\bigl(f(\eta)-E_{\nu
_z}[f]\bigr) \biggl(\sum_{x\in\Z}
\bigl(\eta(x) - z\bigr)\biggr)^2\biggr]
\nonumber
\\[-8pt]
\\[-8pt]
\nonumber
&& {}+\lambda''(z)E_{\nu_z}\biggl[\bigl(f(
\eta)-E_{\nu_z}[f]\bigr) \biggl(\sum_{x\in\Z
}
\bigl(\eta(x) - z\bigr)\biggr)\biggr].
\end{eqnarray}
For the $0$th derivative, we set $\varphi^{(0)}_f(z):= E_{\nu_z}[f]$.

\begin{longlist}[(D2)]
\item[(D2)] For local $L^2(\nu_\rho)$ functions $f$, suppose the limits
\eqref{varphi_derivatives} are well defined and $|\varphi'_f(\rho)|,
|\varphi''_f(\rho)| \leq C(\rho)\|f\|_{L^2(\nu_\rho)}$; already,
$|\varphi_f(\rho)|\leq\|f\|_{L^2(\nu_\rho)}$. Also, suppose
\[
\lim_{n\uparrow\infty} \varphi'_{f_n}(\rho) =
\varphi'_f(\rho) \quad\mbox{and}\quad \lim_{n\uparrow\infty}
\varphi''_{f_n}(\rho) =
\varphi''_f(\rho)
\]
when $\{f_n\}$ and $f$ are local functions such that $\lim_{n\uparrow
\infty} f_n(\eta) = f(\eta)$ and $f_n(\eta) \leq\hat f(\eta)$
configuration-wise for each $n$ where $\hat f\in L^2(\nu_\rho)$.
\end{longlist}
When $\{\nu_x\}$ are product or rapidly mixing Markov measures, again
the case for our examples, this condition also holds by calculation
with \eqref{tilted_measure}.

\textit{Spectral gap}. We now give a ``spectral gap'' condition.
For $\ell\geq1$, recall $\Lambda_\ell$ is the box of size $2\ell+1$,
namely $\Lambda_\ell:=\{x\in\Z\dvtx |x|\leq\ell\}$. Let also, for $k\geq
0$ and $\xi\in\Omega$, $\G_{k,\ell, \xi}:= \{ \eta\dvtx  \sum_{x\in\Lambda
_\ell} \eta(x) = k, \eta(y)=\xi(y)  \mbox{ for } y\notin\Lambda_\ell
\}$ be the
hyperplane of configurations on $\Lambda_\ell$ with $k$ particles which
equal $\xi$ outside $\Lambda_\ell$. Denote by $\nu_{k,\ell, \xi}$ the
canonical measure on $\G_{k,\ell, \xi}$, namely
\[
\nu_{k,\ell, \xi}(\cdot):= \nu_{\rho} \biggl(\cdot\Big|\sum
_{x\in{\Lambda
_\ell}}\eta(x)=k, \eta(y) = \xi(y) \mbox{ for }y\notin
\Lambda_\ell \biggr).
\]

Consider now the process, restricted to the hyperplane $\G_{k,\ell,\xi
}$ with generator
\[
\mathcal{S}_{n,\G_{k,\ell, \xi}}f(\eta) = \frac{1}{2}\mathop{\sum
_{|x-y|=1}}_{x,y\in\Lambda_{\ell}} b^n_x(\eta)
\nabla_{x,y} f(\eta).
\]
This is a finite-state Markov process with reversible invariant measure
$\nu_{k,\ell, \xi}$. Denote by $\lambda_{k,\ell,\xi,n}$ the spectral
gap, that is the second largest eigenvalue of $-\mathcal{S}_{n,\G
_{k,\ell,\xi}}$ (with $0$ being the largest).
Let $W(k,\ell,\xi,n)$ denote the reciprocal of $\lambda_{k,\ell,\xi
,n}$, which is set to $\infty$ if $\lambda_{k,\ell,\xi,n} = 0$. Then
the associated
Poincar\'e-inequality reads as
%
\begin{equation}
\label{poincare inequality} \operatorname{Var}(f,\nu_{k,\ell, \xi}) \leq {W(k,\ell, \xi,n)\mathcal
{D}_n(f,\nu_{k,\ell, \xi})},
\end{equation}
where $\operatorname{Var}(f,\nu_{k,\ell, \xi})$ is the variance of $f$ with
respect to $\nu_{k,\ell, \xi}$ and the canonical Dirichlet form
$\mathcal{D}_n(f,\nu_{k,\ell, \xi})$ is given by
\[
\mathcal{D}_n(f,\nu_{k,\ell, \xi}):= \frac{1}{2}\sum
_{x,x+1\in
\Lambda_\ell} E_{\nu_{k,\ell, \xi}} \bigl[ b^{R,n}_x(
\eta) \bigl(\nabla _{x,x+1}f(\eta) \bigr)^2 \bigr].
\]
When $W(k,\ell,\xi,n)<\infty$, the process is ergodic and $\nu_{k,\ell
,\xi}$ is the unique invariant measure.

Denote the ``outside variables'' by $\eta^c_\ell= \{\eta(x)\dvtx x\notin
\Lambda_\ell\}$. We will assume the following condition on $W(k,\ell,
\xi,n)$.
\begin{longlist}[(G)]
\item[(G)] Suppose there is a constant $C=C(\rho)$ such that, for
$n\geq1$, we have
\[
E_{\nu_{\rho}} \biggl[W \biggl(\sum_{x\in\Lambda_\ell}
\eta(x),\ell, \eta ^c_\ell,n \biggr)^2 \biggr]
\leq {C\ell^4}.
\]
\end{longlist}

We remark a sufficient condition to verify (G) would be the uniform
bound $\sup_{k,\xi,n} \ell^{-2}W(k,\ell, \xi, n) <\infty$, which holds
for some types but not all of the specific models discussed.

\textit{Equivalence of ensembles.} We will also assume an ``equivalence of
ensembles'' estimate between the canonical and grand-canonical measures.
Define, for $\ell\geq1$ and $\eta\in\Omega$, the empirical average
\[
\eta^{(\ell)} = \frac{1}{2\ell+1}\sum_{y\in\Lambda_\ell}
\eta(y).
\]

\begin{longlist}[(EE)]
\item[(EE)] For local $L^5(\nu_\rho)$ functions $f$, supported on $\{
\eta(x)\dvtx x\in\Lambda_{\ell_0}\}$, such that $\varphi_f(\rho)=\varphi
'_f(\rho)=0$, and $\ell\geq\ell_0$, there exist constants $\alpha_0>0$
and $C=
C(\rho,\ell_0,\alpha_0)$ where
\[
\biggl\| E_{\nu_\rho} \bigl[f|\eta^{(\ell)}, \eta^c_\ell
\bigr] - \frac{\varphi_f''(\rho)}{2} \biggl[\bigl(\eta^{(\ell)} -\rho
\bigr)^2 - \frac{\sigma
^2_\ell(\rho)}{2\ell+1} \biggr] \biggr\|_{L^4(\nu_\rho)} \leq
\frac{C\|f\|_{L^5(\nu_\rho)}}{\ell^{1+\alpha_0/2}}.
\]

On the other hand, when only $\varphi_f(\rho)=0$ is known,
\[
\bigl\| E_{\nu_\rho} \bigl[f|\eta^{(\ell)}, \eta^c_\ell
\bigr] - \varphi_f'(\rho) \bigl(\eta^{(\ell)} -
\rho \bigr)\bigr \|_{L^4(\nu_\rho)} \leq \frac{C\|f\|_{L^5(\nu_\rho)}}{\ell^{1/2+\alpha_0/2}}.
\]
\end{longlist}

We remark, a weaker version, where the $L^2(\nu_\rho)$ norm, instead
of the $L^4(\nu_\rho)$ norm of the difference, is say less than the
same right-hand side expressions with $\|f\|_{L^3(\nu_\rho)}$ in place
of $\|f\|_{L^5(\nu_\rho)}$ would be sufficient for our purposes if
there is a uniform bound on the inverse gap: $\sup_{k,\xi,n}\ell
^{-2}W(k,\ell,\xi,n)<\infty$.

Usually, such estimates follow from a local central limit theorem. In
Proposition~\ref{2EE}, we show, when $\nu_\rho$ is a nondegenerate
product measure, that (EE) holds with $\alpha_0 =1$. In Proposition~\ref
{Markov_EE}, with respect to a Markovian measure, we prove (EE) holds
with $\alpha_0 = 1-\varepsilon$ for any fixed $0<\varepsilon<1$. These
two propositions cover the examples discussed in the article.

\textit{Initial conditions.} We will start from initial measures $\{\mu^n\}
$ which have bounded relative entropy $H(\mu^n;\nu_\rho)$ with respect
to $\nu_\rho$.
\begin{longlist}[(BE)]
\item[(BE)] Suppose $\{\mu^n\}$ satisfies
\[
\sup_n H\bigl(\mu^n;\nu_\rho\bigr) =
\sup_n E_{\nu_\rho} \biggl[\frac{d\mu
^n}{d\nu_\rho}\log
\frac{d\mu^n}{d\nu_\rho} \biggr] < \infty.
\]
\end{longlist}

In addition, we presume a diffusive initial limit starting from $\{\mu
^n\}$.

\begin{longlist}[(CLT)]
\item[(CLT)] Under initial measures $\{\mu^n\}$, we suppose $\Y^n_0$
converges weakly to a spatial Gaussian process $\bar\Y_0$ with
covariance $\C(G,H) = \lim_{n\uparrow\infty} \C^n_{\mu^n}(G,H)$ for
compactly supported functions $G,H$.
\end{longlist}

Of course, if $\mu^n\equiv\nu_\rho$, (BE) and (CLT) trivially hold
with $\C(G,H) = \C_{\nu_\rho}(G,H)$.
When $\nu_\rho$ is a product measure, a possible way to get nontrivial
examples of measures $\{\mu^n\}$ satisfying (BE) and (CLT) is the
following. For simplicity, we consider the case on which $\nu_\rho$ is
a Bernoulli product measure on $\{0,1\}^{\bb Z}$. Let $\{\kappa_x^n\dvtx x
\in\bb Z\}$ be a given bounded sequence and define $\mu^n$ as the
nonhomogeneous Bernoulli product measure satisfying
\[
\mu_n\bigl(\eta(x)=1\bigr) = \rho+ \frac{\kappa_x^n}{\sqrt n}.
\]
A simple computation shows that
\[
H\bigl(\mu^n; \nu_\rho\bigr) \leq \frac{C(\|\kappa\|_{\ell^\infty})}{n} \sum
_{x \in\bb Z} \bigl(\kappa_x^n
\bigr)^2.
\]
Therefore, taking $\kappa_x^n = \kappa(x/n)$, where $\kappa\dvtx \bb R \to
\bb R$ is bounded and in $L^2(\bb R)$, we see that
$
\sup_n H(\mu^n; \nu_\rho)<\infty$,
and (BE) is satisfied. On the other hand, since the measure $\mu^n$ is
product, a simple computation shows that, under $\{\mu^n\}$, the
process $\mc Y_0^n$ converges in distribution to $\bar{\mc Y}_0 +\kappa
$, where $\bar{\mc Y}_0$ is a white noise with variance $\rho(1-\rho)$.
In \cite{Quastel-Remenik}, the Cole--Hopf solution of KPZ is considered
starting from such initial conditions.

One may relate probabilities of events $A$ under $\mu^n$ with those
under $\nu_\rho$ by an application of the entropy inequality:
%
\begin{equation}
\label{relative_entropy} \P_{\mu^n}(A) \leq
\frac{\log2 + H(\mu^n;\nu_\rho)}{\log ( 1 + {\mathbb P}_{\nu_\rho}(A)^{-1} )   }.
\end{equation}
For instance, let $r\in L^2(\nu_\rho)$ be a local function. By the
spatial mixing assumption~(IM2), under $\nu_\rho$, we have the
convergence in probability,
%
\begin{equation}
\label{baby_hydrodynamics} \lim_{n\rightarrow\infty} \int_0^T
\frac{1}{2n+1}\sum_{x\in\Lambda_n} \tau_x r
\bigl(\eta^n_s\bigr)\,ds = E_{\nu_\rho}\bigl[r(\eta)
\bigr].
\end{equation}
Then, by the entropy relation, also under $\{\mu^n\}$, the same limit
also holds in probability.

Of course, given that we begin from nearly the invariant measure $\nu
_\rho$, \eqref{baby_hydrodynamics} is a trivial case of
``hydrodynamics.'' Formally, starting from more general measures, the
hydrodynamic equation for the limiting empirical density $\rho=\rho
(x,t)$ would read
%
\begin{equation}
\label{hyd_equation} \partial_t \rho(x,t) + \frac{a}{2}\nabla
\varphi_b \bigl(\rho(x,t) \bigr) = \frac{1}{2}\Delta
\varphi_c \bigl(\rho(x,t) \bigr).
\end{equation}

In a sense, the main results of the paper are on the different
fluctuations from the law of large numbers \eqref{baby_hydrodynamics}
which arise for different regimes of the strength asymmetry parameters
$a$ and $\gamma$.

\subsection{Results}
\label{results}

Denote by $\S(\R)$ the standard Schwarz space of rapidly decreasing
functions equipped with the usual metric, and let $\S'(\mathbb{R})$ be
its dual, namely the set of tempered distributions in $\mathbb{R}$,
endowed with the strong topology.
Denote the density fluctuation field acting on functions $H\in\S
(\mathbb{R})$ as
\[
\mathcal{Y}_{t}^{n}(H) =\frac{1}{\sqrt{n}}\sum
_{x\in{\mathbb{Z}}}H \biggl(\frac{x}{n} \biggr) \bigl(\eta
^n_{t}(x)-\rho\bigr). \label{eq:densfieldinz}
\]
Denote by $D([0,T],\S'(\mathbb{R}))$ and $C([0,T],\S'(\mathbb{R}))$ the
spaces of right continuous functions with left limits and continuous
functions respectively from $[0,T]$ to $\S'(\mathbb{R})$.

We now state a result from the literature which has been proved for
some processes (cf. \cite{Ferrarizrp}, Chapter~11 in \cite{KL} for
zero-range processes with bounded rate, \cite{Ravishankar,Dittrich} for simple exclusion processes, and Section II.2.10 of \cite
{Spohn} for exclusion systems with speed-change), sometimes from more
general initial conditions, when the asymmetry is of order $O(n^{-1})$.

\begin{proposition}
\label{OU_oldprop}
For $\gamma=1$, starting from $\{\mu^n\}$, the sequence $\{\mathcal
{Y}_t^{n}; n\geq{1}\}$ converges in the uniform topology on $D([0,T],
\mathcal{S}'(\mathbb{R}))$ to the process $\mathcal{Y}_t$ which solves
the Ornstein--Uhlenbeck equation
%
\begin{equation}
\label{OU1} \partial_t \mathcal{Y}_t =
\frac{1}{2} \varphi_c'(\rho) \Delta
\mathcal{Y}_t + \frac{a}{2}\varphi_b'(
\rho) \nabla\mathcal{Y}_t +\sqrt{\frac{1}{2}
\varphi_b(\rho)} \nabla \dot\W_t,
\end{equation}
where $\dot\W_t$ is a space--time white noise with unit variance, and $\Y
_0=\bar\Y_0$, the field given in \textup{(CLT)}.
\end{proposition}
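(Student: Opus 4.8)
The plan follows the classical route to Ornstein--Uhlenbeck limits of conservative fluctuation fields: a Dynkin martingale decomposition, a Boltzmann--Gibbs replacement to close the drift, tightness by Mitoma's criterion, identification of limit points via the martingale problem associated with \eqref{OU1}, and well-posedness of that martingale problem. Fix $H\in\S(\R)$. By assumption (M) and \eqref{generator},
\[
\Y^n_t(H)\ =\ \Y^n_0(H)+\int_0^t L_n\Y^n_s(H)\,ds+\M^n_t(H),
\]
where $\M^n_t(H)$ is a martingale whose predictable quadratic variation is the carr\'e du champ, $\langle\M^n(H)\rangle_t=\int_0^t\frac1n\sum_x\big(n[H(\frac{x+1}{n})-H(\frac xn)]\big)^2(p_nb^{R,n}_x+q_nb^{L,n}_x)(\eta^n_s)\,ds$. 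Using (R1) in the form $p_nb^{R,n}_x-q_nb^{L,n}_x=\frac12(c^n_x-c^n_{x+1})+\frac a{2n}b^n_x$ (this is where $\gamma=1$ enters) and two discrete summations by parts, I would rewrite the drift as $L_n\Y^n_s(H)=\frac1{2\sqrt n}\sum_x\Delta_nH(\frac xn)\,c^n_x(\eta^n_s)+\frac a2\frac1{\sqrt n}\sum_x\nabla_nH(\frac xn)\,b^n_x(\eta^n_s)$, with $\Delta_n,\nabla_n$ the discrete Laplacian and gradient; since $\sum_x\Delta_nH(\frac xn)=\sum_x\nabla_nH(\frac xn)=0$, the cylinder functions $c^n_x$ and $b^n_x$ may be recentered to $c^n_x-\varphi_{c^n}(\rho)$ and $b^n_x-\varphi_{b^n}(\rho)$ at no cost.

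The key step is the Boltzmann--Gibbs principle. Since $\gamma=1$ there is a full factor $n^{1/2}$ of spatial averaging in these sums, so the classical spectral-gap replacement of \cite{BR} — or, a fortiori, the sharper Theorem \ref{gbg_L2} — lets one replace, up to an error vanishing in $L^2(\P_{\nu_\rho})$ as $n\to\infty$ and then $\varepsilon\downarrow0$, each recentered $\tau_x(c^n-\varphi_{c^n}(\rho))$, resp. $\tau_x(b^n-\varphi_{b^n}(\rho))$, inside the time integral by $\varphi'_{c^n}(\rho)$, resp. $\varphi'_{b^n}(\rho)$, times the block average $\frac1{2\varepsilon n+1}\sum_{|y-x|\le\varepsilon n}(\eta(y)-\rho)$; via (BE) and the entropy inequality \eqref{relative_entropy} the same holds under $\{\mu^n\}$. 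Passing $\varphi'_{c^n}(\rho)\to\varphi'_c(\rho)$, $\varphi'_{b^n}(\rho)\to\varphi'_b(\rho)$ by (D2), and using that $\Y^n_s$ smeared at scale $\varepsilon$ tested against $\Delta_nH$ (resp. $\nabla_nH$) tends to $\Y_s(\Delta H)$ (resp. $\Y_s(\nabla H)$), the drift converges to $\int_0^t\big(\frac12\varphi'_c(\rho)\Y_s(\Delta H)+\frac a2\varphi'_b(\rho)\Y_s(\nabla H)\big)ds$. For the quadratic variation, $p_nb^{R,n}_x+q_nb^{L,n}_x\to\frac12 b^n_x$ and the equilibrium law of large numbers \eqref{baby_hydrodynamics} (transported to $\{\mu^n\}$ by \eqref{relative_entropy}) give $\langle\M^n(H)\rangle_t\to\frac12\varphi_b(\rho)\|\nabla H\|^2_{L^2(\R)}\,t$.

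With these inputs, tightness is routine: jumps of $\Y^n_\cdot(H)$ are $O(n^{-1/2})$, the quadratic variations are bounded, the $L^2$ norm of the drift is bounded uniformly in $n$ by the Boltzmann--Gibbs estimate, and (CLT) controls the initial data, so each real process $\Y^n_\cdot(H)$ is tight with continuous limit points; Mitoma's criterion then yields tightness of $\{\Y^n_\cdot\}$ in $D([0,T],\S'(\R))$ with limit points in $C([0,T],\S'(\R))$. Any limit point $\Y$ satisfies $\Y_0=\bar\Y_0$ and, for every $H\in\S(\R)$, that both $N_t(H):=\Y_t(H)-\Y_0(H)-\int_0^t\big(\frac12\varphi'_c(\rho)\Y_s(\Delta H)+\frac a2\varphi'_b(\rho)\Y_s(\nabla H)\big)ds$ and $N_t(H)^2-\frac12\varphi_b(\rho)\|\nabla H\|^2_{L^2(\R)}\,t$ are martingales (the required uniform integrability coming from the fourth-moment bound in (IM2) and the exponential moments in (IM1)). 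This is exactly the martingale problem for the generalized Ornstein--Uhlenbeck field \eqref{OU1}; its solution is unique, since a Galilean change of frame removes the advection term $\frac a2\varphi'_b(\rho)\nabla$ and reduces it to the classical, well-posed Ornstein--Uhlenbeck equation. Hence the full sequence converges, and since the limit lies in $C([0,T],\S'(\R))$ the convergence holds in the uniform topology on $D([0,T],\S'(\R))$.

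The main obstacle is the Boltzmann--Gibbs replacement. In the regime $\gamma=1$, however, it is the least delicate case: the full $n^{1/2}$ spatial averaging makes the one-block/two-block (or $H_{-1}$) estimate available directly, without the delicate time cancellation that drives Theorems \ref{th crossover} and \ref{th kpz scaling}. The only feature genuinely specific to the present generality is the $n$-dependence of the rates $b^{R,n},b^{L,n},c^n$, which is absorbed by the uniform bounds and the configurationwise convergences in (R1) together with (D2).
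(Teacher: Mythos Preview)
The paper does not actually prove Proposition~\ref{OU_oldprop}; it is stated as a known result from the literature (with citations to \cite{Ferrari_zrp}, \cite{KL}[Chapter~11], \cite{Ravishankar}, \cite{Dittrich}, \cite{Spohn}) for the specific models considered. Your sketch is the classical Brox--Rost route that those references use, and it coincides with the paper's own argument for the closely related Theorem~\ref{th crossover} (the $1/2<\gamma<1$ case), specialized to the unshifted field and $\gamma=1$: Dynkin martingale decomposition \eqref{mart_decomposition}, the linear Boltzmann--Gibbs replacement (here Proposition~\ref{usualbg} already suffices; the sharper Theorem~\ref{gbg_L2} is not needed when $\gamma=1$), tightness by Mitoma and Kolmogorov--Centsov, identification of the quadratic variation via \eqref{baby_hydrodynamics}, and uniqueness of the Ornstein--Uhlenbeck martingale problem. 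So your approach is correct and is essentially the standard one the paper defers to.
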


The Ornstein--Uhlenbeck equation \eqref{OU1} has a drift term coming
from the weak asymmetry of the jump rates. The drift, as is well known,
can be understood in terms of a characteristic velocity $\upsilon=
(a/2)\varphi'_b(\rho)$ from considering the linearization of the
hydrodynamic equation \eqref{hyd_equation} (cf. Chapter II.2 of \cite
{Spohn}). However, it can be removed from the limit field by observing
the density fluctuation field in the frame of an observer moving along
the process characteristics. Define
\[
\mathcal{Y}_{t}^{n,\gamma}(H) =\frac{1}{\sqrt{n}}\sum
_{x\in{\mathbb{Z}}}H \biggl(\frac{x}{n}-\frac
{1}{n} \biggl\{
\frac{a\varphi_{b^n}'(\rho)tn^2}{2n^\gamma} \biggr\} \biggr) \bigl(\eta ^n_{t}(x)-
\rho\bigr). \label{eq:densfieldinzgamma}
\]
If $\gamma=1$, Proposition~\ref{OU_oldprop} is equivalent to the
statement that $\mathcal{Y}_t^{n,\gamma}$ converges in the uniform
topology on $D([0,T], \S'(\R))$ to $\mathcal{Y}_t$, the unique solution
of the drift-removed Ornstein--Uhlenbeck equation
%
\begin{equation}
\label{OU2} \partial_t \mathcal{Y}_t =
\tfrac{1}{2} \varphi_c'(\rho) \Delta\mathcal
{Y}_t +\sqrt{\tfrac{1}{2}\varphi_b(\rho)} \nabla
\dot\W_t.
\end{equation}
This equation of course corresponds to \eqref{OU1} with $a=0$, is well
posed and has a unique solution (cf. \cite{Walsh}).

Now we increase the strength of the asymmetry in the jump rates by
decreasing the value of $\gamma$. We show for $1/2<\gamma<1$, starting
from the measures $\{\mu^n\}$, that there is no effect in the
convergence result of the fluctuation field.

\begin{theorem}[(Crossover fluctuations)]\label{th crossover}
For $1/2<\gamma<1$, starting from initial measures $\{\mu^n\}$, the
sequence $\{\mathcal{Y}_t^{n,\gamma}; n\geq{1}\}$ converges in the
uniform topology on $D([0,T], \S'(\R))$ to the process $\mathcal{Y}_t$
which is the solution of the Ornstein--Uhlenbeck equation \eqref{OU2}
with initial condition $\Y_0= \bar{\Y}_0$ given in \textup{(CLT)}.
\end{theorem}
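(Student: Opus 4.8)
The plan is to proceed via the standard martingale/tightness route, using the stochastic differential of the shifted field and a Boltzmann--Gibbs replacement to close the equation, and then show that the asymmetric nonlinear contribution is negligible in the regime $1/2<\gamma<1$. Writing $\Y^{n,\gamma}_t$ for the field as seen along the characteristic, Dynkin's formula gives
\begin{equation*}
\Y^{n,\gamma}_t(H) \ = \ \Y^{n,\gamma}_0(H) + \int_0^t \big(\partial_s + L_n\big)\Y^{n,\gamma}_s(H)\, ds + \M^{n,\gamma}_t(H),
\end{equation*}
where $\M^{n,\gamma}_t(H)$ is an $L^2$ martingale. First I would compute the action of $L_n$ on the linear functional and split it into its symmetric part, which after summation by parts and the gradient condition (R1) produces the discrete Laplacian term $\frac12\varphi_c'(\rho)\Delta H$ contribution plus an error, and its antisymmetric part of order $n^{2-\gamma}$, which contains the nonlinear current $\tau_xV$ with $V = b^R - b^L$ (up to the $c$-gradient bookkeeping). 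The shift term $\partial_s \Y^{n,\gamma}_s(H)$ is designed precisely to cancel the ``linear in $\eta-\rho$'' part of the antisymmetric contribution, leaving only the genuinely nonlinear fluctuation term
\begin{equation*}
\frac{1}{n^{\gamma-1/2}}\int_0^t \sum_{x\in\Z} \nabla H(x/n)\,\tau_x \widetilde V(\eta^n_{n^2 s})\, ds,
\end{equation*}
where $\widetilde V$ is the mean-zero function whose tilted mean has vanishing first derivative at $\rho$; this is exactly the object the Boltzmann--Gibbs principle (Subsection \ref{BG_statement}, Theorem \ref{gbg_L2}) addresses.

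Next I would invoke that Boltzmann--Gibbs estimate, valid in its sharpened $L^2(\P_{\nu_\rho})$ form for all $1/2\le\gamma\le 1$, to replace $\tau_x\widetilde V$ by $\tfrac{\varphi_{\widetilde V}''(\rho)}{2}$ times the quadratic expression in a smoothed field $\Y^{n,\gamma}_s(\iota_\varepsilon^x)^2$ minus its $\nu_\rho$-mean, with an error that is $o(1)$ as $n\uparrow\infty$ then $\varepsilon\downarrow 0$. The crucial point distinguishing $\gamma<1$ from $\gamma=1/2$ is the prefactor: after the replacement this quadratic term carries a factor $n^{1/2-\gamma}\cdot(\text{bounded})$, which for $1/2<\gamma<1$ tends to zero (uniformly, using the fourth-moment bound in (IM2) and stationarity under $\nu_\rho$ to control the $L^1$ norm of the quadratic expression). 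Hence the entire nonlinear contribution vanishes in the limit, and one is left with the linear OU structure. The martingale term is handled by an ergodic-theorem computation of its quadratic variation, which under $\nu_\rho$ converges to $\tfrac12\varphi_b(\rho)\|\nabla H\|^2_{L^2(\R)}\,t$; the entropy inequality \eqref{relative_entropy} transfers all these $\P_{\nu_\rho}$-probability estimates to $\P_{\mu^n}$ since $\sup_n H(\mu^n;\nu_\rho)<\infty$ by (BE).

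For tightness I would use Mitoma's criterion, reducing to tightness of the real-valued processes $\{\Y^{n,\gamma}_\cdot(H)\}$ for each fixed $H\in\S(\R)$, which follows from the decomposition above together with Aldous' criterion: the drift term is controlled using the $\Delta H$ bound plus the just-established smallness of the nonlinear term, and the martingale by its quadratic variation. Any limit point is then supported on $C([0,T],\S'(\R))$ (jumps are $O(n^{-1/2})$) and, passing to the limit in the martingale formulation, satisfies the martingale problem associated with \eqref{OU2} with initial law $\bar\Y_0$ from (CLT); uniqueness for that linear equation (cf. \cite{Walsh}) upgrades convergence of subsequences to convergence of the full sequence, and the uniform topology on $D([0,T],\S'(\R))$ is obtained as in Proposition \ref{OU_oldprop}. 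The main obstacle is the Boltzmann--Gibbs step: one must ensure the estimate of Theorem \ref{gbg_L2} indeed holds with a bound uniform in $\gamma\in[1/2,1]$ and with the right power of $n$, so that the $n^{1/2-\gamma}$ gain is genuinely available; everything else is a matter of combining summation by parts, the gradient and equivalence-of-ensembles conditions, and standard tightness arguments.
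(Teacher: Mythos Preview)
Your proposal is correct and follows essentially the same route as the paper: the martingale decomposition of $\Y^{n,\gamma}_t(H)$, the generalized Boltzmann--Gibbs principle (Theorem~\ref{gbg_L2}) to handle the nonlinear term $\B^{n,\gamma}_t(H)$, tightness via Mitoma, transfer from $\nu_\rho$ to $\mu^n$ via the entropy inequality, and identification of any limit point as a solution of the OU martingale problem whose uniqueness yields full convergence. The paper's treatment of the vanishing of $\B_t(H)$ for $\gamma\in(1/2,1]$ is exactly your observation that the $\gamma=1/2$ bound on $\B^{n,\gamma}_t$ picks up an extra factor $n^{-(2\gamma-1)}$.

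Two small points where the paper is slightly more careful than your sketch: (i) the paper introduces a separate correction term $\K^{n,\gamma}_t(H)$ to account for the discrepancy between the integer-lattice shift $H_{\gamma,s}$ and the real shift $\widetilde H_{\gamma,s}$ in the moving frame, which must be shown to vanish (it is $O(n^{-1})$); (ii) for tightness of the drift terms $\I^{n,\gamma}_t$ and $\B^{n,\gamma}_t$ the paper uses a Kolmogorov--Centsov moment bound (deriving $\E_{\nu_\rho}[(\B^{n,\gamma}_t(H))^2]\le C t^{4/3}$ via Theorem~\ref{gbg_L2} with an optimal choice of $\ell$) rather than Aldous, and for the martingale it uses a fourth-moment bound obtained from an exponential martingale. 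These are implementation details; your outline would go through with minor adjustments.
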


However, for $\gamma=1/2$, which is a threshold, a much different
qualitative limit behavior is obtained as the strength of the weak
asymmetry in the jump rates is big enough to influence the limit field.
As mentioned in the \hyperref[intro_section]{Introduction}, the limit field $\Y_t$ should
satisfy, in some sense, a stochastic Burgers equation, written in our
framework as
%
\begin{equation}
\label{BE} \partial_t\mathcal{Y}_t =
\frac{\varphi_c'(\rho)}{2} \Delta\mathcal {Y}_t + \frac{a}{2}
\varphi_b''(\rho) \nabla
\mathcal{Y}_t^2 + \sqrt {\frac{1}{2}
\varphi_b(\rho)} \nabla \dot\W_t,
\end{equation}
although it is ill-posed.

We now detail in what sense we mean to ``solve'' \eqref{BE} in terms of a
martingale formulation.
Let $\iota\dvtx  \bb R \to[0,\infty)$ be the function $\iota(z) =
(1/2)1_{[-1,1]}(z)$. Also, for $0<\varepsilon\leq1$, define
$\iota_\varepsilon(z) = \varepsilon^{-1} \iota(\varepsilon^{-1}z)$
and let $G_\varepsilon\dvtx  \bb R \to[0,\infty)$ be a smooth compactly
supported function in $\S(\R)$ which approximates $\iota_\varepsilon$:
That is, $\|G_\varepsilon\|^2_{L^2(\R)} \leq2\|\iota_\varepsilon\|
^2_{L^2(\R)}=\varepsilon^{-1}$ and
\[
\lim_{\varepsilon\downarrow0}\varepsilon^{-1/2}\|G_\varepsilon-\iota
_\varepsilon\|_{L^2(\R)} = 0.
\]
Such choices can be readily found by convoluting $\iota_\varepsilon$
with smooth kernels.
Also, for $x\in\R$, define the shift $\tau_x$ so that $\tau_x
G_\varepsilon(z) = G_\varepsilon(x+z)$.

Consider now an $\S'(\R)$-valued process $\{\mc Y_t; t \in[0,T]\}$ and
for $0\leq s\leq t\leq T$ let
\[
\mc A_{s,t}^\varepsilon(H) = \int_s^t
\int_{\bb R} \nabla H(x) \bigl[\mc\Y_u(
\tau_{-x} G_\varepsilon) \bigr]^2 \,dx \,du.
\]
We say the process $\Y_\cdot$ satisfies the \emph{probability energy
condition} if
for each $H\in\S(\R)$,
%
\begin{equation}
\label{en.cond} \bigl\{\A_{s,t}^\varepsilon(H)\bigr\} \mbox{ is
Cauchy in probability as }\varepsilon\downarrow0
\end{equation}
and the limit in probability does not depend on the particular
smoothing family $\{G_\varepsilon\}$.
This limit defines the process
$\{\mc A_{s,t}; 0\leq s\leq t\leq T\}$ given by
\[
\mc A_{s,t}(H):= \lim_{\varepsilon\downarrow0} \mc
A^\varepsilon_{s,t}(H),
\]
which is $\S'(\R)$ valued (cf. pages~364--365; Theorem~6.15
of \cite{Walsh}).

We will say that $\{\mc Y_t; t \in[0,T]\}$ is a \emph{probability
energy solution} of \eqref{BE} if the following conditions hold:
\begin{longlist}[(iii)]
\item[(i)] Initially, $\mc Y_0$ is a spatial Gaussian process with
covariance $\C(G,H)$ for $G,H\in\S(\R)$.
\item[(ii)] The process $\{\mc Y_t; t \in[0,T]\}$ satisfies the
probability energy condition \eqref{en.cond}.
\item[(iii)] Then, the $\S'(\R)$ valued
process $\{\M_t\dvtx t\in[0,T]\}$ where
%
\begin{equation}
\label{formal_conservative}\qquad \mc M_t(H):= \mc Y_t(H) - \mc
Y_0(H) - \frac{\varphi'_c(\rho
)}{2}\int_0^t
\mc Y_s(\Delta H) \,ds - \frac{a \varphi''_b(\rho)}{2} \mc A_{0,t}(H)
\end{equation}
is a continuous martingale with quadratic variation
\[
\bigl\langle \mc M_t(H)\bigr\rangle = \frac{\varphi_b(\rho) t}{2}\|\nabla H\|_{L^2(\bb R)}^2.
\]
\end{longlist}
In particular, condition (iii) specifies by L\'evy's theorem that $\M
_t(H)$ is a Brownian motion with variance $(\varphi_b(\rho)/2)t\|\nabla
H\|^2_{L^2(\R)}$.

We also define a stronger notion of solution to \eqref{BE} which may be
verified in some cases. We say that $\Y_t$ satisfies the \emph{$L^2$
energy condition} if in \eqref{en.cond}, instead of in the probability
sense, we assert $\{\A^\varepsilon_{s,t}(H)\}$ is Cauchy in $L^2$ with
respect to the underlying probability measure, and $\A_{s,t}(H)$ is its
$L^2$ limit. Then we say $\Y_t$ is an \emph{$L^2$ energy solution} of
\eqref{BE} if (i) holds as before, (ii) the $L^2$ energy condition
holds and (iii) holds with respect to the $L^2$ limit $\A_{s,t}(H)$.

\begin{theorem}[(KPZ fluctuations)]
\label{th kpz scaling}
For $\gamma= 1/2$, starting from initial measures $\{\mu^n\}$, the
sequence of processes $\{\mathcal{Y}_t^{n,\gamma}\dvtx n\geq n_0\}$ is
tight in the uniform topology on
$D([0,T],\mathbb{S}'(\mathbb R))$. Moreover, any limit point of
$\mathcal{Y}_t^{n,\gamma}$ is a probability energy solution with
respect to \eqref{BE} with initial field $\bar\Y_0$ given in \textup{(CLT)}.

If the initial measure is $\mu^n\equiv\nu_\rho$, any limit point of $\Y
_t^{n,\gamma}$ is an $L^2$ energy solution of \eqref{BE} with initial
field $\bar\Y_0$ given in \textup{(CLT)}.
\end{theorem}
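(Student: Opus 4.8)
The plan is to work from Dynkin's formula for the shifted field. For $H\in\S(\R)$,
\[
\Y^{n,\gamma}_t(H) \ = \ \Y^{n,\gamma}_0(H) + \int_0^t\big(\partial_s + L_n\big)\Y^{n,\gamma}_s(H)\,ds + \M^{n,\gamma}_t(H),
\]
where $\M^{n,\gamma}_t(H)$ is an $L^2(\P_{\mu^n})$ martingale whose quadratic variation is explicitly computable. First I would make the drift $(\partial_s+L_n)\Y^{n,\gamma}_s(H)$ explicit: by the gradient identity (R1), $L_n\eta(w)=\tfrac{n^2}{2}(c^n_{w-1}-2c^n_w+c^n_{w+1})+\tfrac{an^{2-\gamma}}{2}(b^n_{w-1}-b^n_w)$, so after two discrete summations by parts, and on decomposing $c^n=\varphi_{c^n}(\rho)+\varphi'_{c^n}(\rho)(\eta(0)-\rho)+U^n$ and $b^n=\varphi_{b^n}(\rho)+\varphi'_{b^n}(\rho)(\eta(0)-\rho)+V^n$ so that $\varphi_{U^n}(\rho)=\varphi'_{U^n}(\rho)=0$ and similarly for $V^n$, the symmetric part yields $\tfrac{\varphi'_{c^n}(\rho)}{2}\Y^{n,\gamma}_s(\Delta H)$ plus an additive functional of $U^n$ carrying an extra factor $n^{-1}$, while the asymmetric part yields $\tfrac{a}{2}n^{1-\gamma}\varphi'_{b^n}(\rho)\Y^{n,\gamma}_s(\nabla H)$ plus $\tfrac{a}{2}\,n^{1/2-\gamma}\sum_w\nabla H\big(\tfrac{w}{n}-\theta_n(s)\big)\tau_w V^n(\eta^n_{n^2s})$, modulo negligible pieces. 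The frame velocity $\theta_n$ is chosen precisely so that $\partial_s\Y^{n,\gamma}_s(H)=-\tfrac{a}{2}n^{1-\gamma}\varphi'_{b^n}(\rho)\Y^{n,\gamma}_s(\nabla H)$ cancels the diverging term; at the critical value $\gamma=1/2$ the exponent $n^{1/2-\gamma}$ equals $1$, so the $V^n$-functional does not decay and must be re-expressed through the field.

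Next I would invoke the Boltzmann-Gibbs estimate (Theorem \ref{gbg_L2}) twice. Applied to $V^n$, for which $\varphi''_{V^n}(\rho)=\varphi''_{b^n}(\rho)\to\varphi''_b(\rho)$ by (D2), it replaces the surviving additive functional, up to an error vanishing as $n\uparrow\infty$ and then $\varepsilon\downarrow0$, by a constant multiple (matching \eqref{formal_conservative}) of
\[
\A^{n,\varepsilon}_{0,t}(H) \ := \ \int_0^t\!\!\int_\R\nabla H(x)\,\big[\Y^{n,\gamma}_s(\tau_{-x}G_\varepsilon)\big]^2\,dx\,ds,
\]
the centering constant produced by the estimate integrating against $\nabla H$ to zero. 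Applied to $U^n$, whose functional carries the additional $n^{-1}$, the estimate lands in a sub-critical regime where both the functional and the replacement error vanish in $L^2(\P_{\nu_\rho})$. Since by (BE) the $\mu^n$ are only entropy-bounded perturbations of $\nu_\rho$, these $L^2(\P_{\nu_\rho})$ bounds are transferred to $\P_{\mu^n}$ via the entropy inequality \eqref{relative_entropy} as convergences in $\P_{\mu^n}$-probability; when $\mu^n\equiv\nu_\rho$ the $L^2$ bounds survive unchanged, which is exactly why the theorem gives a Cauchy energy solution in general but an $L^2$ energy solution in the stationary case.

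For tightness I would use Mitoma's criterion, reducing to tightness of $\{\Y^{n,\gamma}_\cdot(H)\}$ in $D([0,T],\R)$ for each $H\in\S(\R)$, verified through Aldous' criterion applied to the decomposition above. The martingale term is controlled by computing $\<\M^{n,\gamma}_t(H)\>$, which converges to $\tfrac{\varphi_b(\rho)t}{2}\|\nabla H\|^2_{L^2(\R)}$, together with the fact that its jumps are $O(n^{-1/2})$; the linear drift $\tfrac{\varphi'_c(\rho)}{2}\int_0^s\Y^{n,\gamma}_u(\Delta H)\,du$ is uniformly Lipschitz in time given the $L^2$ bound on $\Y^{n,\gamma}_u(\Delta H)$ from (IM2) and the entropy bound (BE); and the nonlinear term $\A^{n,\varepsilon}_{0,t}(H)$ calls for an \emph{energy estimate}, a bound on $\E_{\mu^n}\big[\A^{n,\varepsilon}_{0,t}(H)^2\big]$ uniform in $n$ and $\varepsilon$, which Theorem \ref{gbg_L2} supplies since the a priori $L^2$ control of the $V^n$-functional is $\varepsilon$-free. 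Assembling these gives tightness of $\{\Y^{n,\gamma}_\cdot\}$ in $D([0,T],\S'(\R))$, and the $O(n^{-1/2})$ jumps force every limit point into $C([0,T],\S'(\R))$.

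Finally, to characterize a subsequential limit $\Y$ I would check the three requirements of a Cauchy energy solution: (i) $\Y_0$ is distributed as $\bar\Y_0$ by (CLT); (ii) the Cauchy energy condition \eqref{en.cond} follows by transporting the uniform Boltzmann-Gibbs control to $\Y$, since $\A^{n,\varepsilon}_{0,t}(H)$ approximates the microscopic $V^n$-functional with an error that is uniform in $n$ and $o_\varepsilon(1)$, so the family $\A^\varepsilon_{0,t}(H)$ attached to $\Y$ is Cauchy in probability as $\varepsilon\downarrow0$, and independence of the smoothing family is read off from the hypothesis $\varepsilon^{-1/2}\|G_\varepsilon-\iota_\varepsilon\|_{L^2(\R)}\to0$; (iii) passing to the limit in the microscopic martingale and using the replacement from the Boltzmann-Gibbs step identifies the limit of $\M^{n,\gamma}_t(H)$ with $\M_t(H)$ of \eqref{formal_conservative}, which by the convergence of quadratic variations and path continuity is a continuous martingale with $\<\M_t(H)\>=\tfrac{\varphi_b(\rho)t}{2}\|\nabla H\|^2_{L^2(\R)}$, hence a Brownian motion by L\'evy's theorem. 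I expect the main obstacle to be (ii) together with the identification in (iii): the Boltzmann-Gibbs estimate, proved through the $H_{-1}$ renormalization scheme, controls the $V^n$-functional only in $L^2(\P_{\nu_\rho})$ and thus, after the entropy step, only in $\P_{\mu^n}$-probability, so one must interchange the limits $n\uparrow\infty$ and $\varepsilon\downarrow0$ carefully to produce a single object $\A_{0,t}(H)$ that is at once the limit of $\A^{n,\varepsilon}_{0,t}(H)$ and the $\varepsilon$-Cauchy limit associated with $\Y$, and to check that it does not depend on $\{G_\varepsilon\}$; this interchange is the delicate point, and it is what forces the weaker Cauchy energy notion away from stationarity.
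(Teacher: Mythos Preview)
Your proposal is correct and follows essentially the same route as the paper: Dynkin decomposition of $\Y^{n,\gamma}_t(H)$, closure of the nonlinear drift via the Boltzmann--Gibbs estimate (Theorem~\ref{gbg_L2}), tightness through Mitoma's criterion, and transfer from $\nu_\rho$ to $\{\mu^n\}$ via the entropy inequality~\eqref{relative_entropy}, which is precisely what forces the Cauchy rather than $L^2$ energy notion away from stationarity. The only notable methodological differences are that the paper obtains martingale tightness in the \emph{uniform} topology through a fourth-moment bound derived from an exponential martingale (rather than Aldous plus jump-size control), handles $\B^{n,\gamma}_t$ via a direct Kolmogorov--Centsov bound $\E_{\nu_\rho}[(\B^{n,\gamma}_t)^2]\le Ct^{4/3}$ obtained by optimizing $\ell\sim t^{1/3}n$ in Theorem~\ref{gbg_L2}, and separates out an explicit remainder $\K^{n,\gamma}_t$ coming from the discrepancy between the integer and fractional characteristic shifts---all points you absorb into ``negligible pieces'' or treat by equivalent means.
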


\begin{remark}
\label{kpz_remark}
We now make the following comments:
\begin{longlist}[1.]
\item[1.] Formally, equation \eqref{formal_conservative} corresponds to the
stochastic Burgers equation~\eqref{BE} where the nonlinear term is
represented by $\mc A_{0,t}$.
We remark, as in \cite{Assing}, by taking a fast subsequence in
$\varepsilon$, one may write $\mc A_{0,t}$ as a function of $\{\Y_u\dvtx u\leq t\}$, and form an equation in which $\Y_t$ satisfies \eqref{BE}
a.s. on a type of negative order Hermite Hilbert space.

\item[2.] We also remark, as alluded to in the \hyperref[intro_section]{Introduction}, if there were a
unique probability or $L^2$ energy solution, that is uniqueness of
process in the associated ``martingale
formulation,'' since with respect to simple exclusion the fluctuation
field limit is known in terms of the ``Cole--Hopf'' solution of the KPZ
equation~\cite{BG}, not only could one conclude a unique fluctuation
field limit in Theorem~\ref{th kpz scaling} in the framework of the
particle systems considered, but also identify it in terms of the
``Cole--Hopf'' apparatus.
What is required to show uniqueness of $\Y_t$ is to determine uniquely
its finite dimensional distributions (cf. Section~4.4 of \cite{EK}),
which the nonlinearity of $\mc A_{0,t}$ makes difficult.

\item[3.] We also note that the statement of Theorem~\ref{th kpz scaling} is
nontrivial when $a\neq0$ and $b$ is such that
%
\begin{equation}
\label{KPZ condition} \varphi_b''(\rho)
\neq{0}.
\end{equation}
Otherwise, when $\varphi''_b(\rho)=0$, the limit field $\Y_t$ satisfies
the Ornstein--Uhlenbeck equation (\ref{OU2}). Examples, fitting in our
framework, where the second derivative vanishes include types of
zero-range, that is independent particle systems where $\varphi_b(\rho)
= 2\rho$ which are in the EW class.
\end{longlist}
\end{remark}

\subsection{Model $1$: Zero-range processes}
\label{zero-range}
The one-dimensional weakly asymmetric zero-range process $\eta^n_{t}$,
on the state space $\Omega:=\mathbb{N}_0^{\mathbb{Z}}$, consists of a
collection of random walks which interact in that the jump rate of a
particle at vertex $x$ only depends on the number of particles at $x$.
More precisely, the generator is in form~(\ref{generator}) where
\[
b^{R,n}_x(\eta) = g\bigl(\eta(x)\bigr) \quad\mbox{and}\quad
b^{L,n}_x(\eta) = g\bigl(\eta(x+1)\bigr)
\]
do not depend on $n$ and are fixed with respect to a function $g\dvtx \N_0
\rightarrow\R_+$ such that $g(0)=0$, $g(k)>0$ for $k\geq1$ and $g$ is
Lipschitz,
\begin{longlist}[(LIP)]
\item[(LIP)] $\sup_{k\geq0}|g(k+1)-g(k)|<\infty$.
\end{longlist}
Under this specification, a Markov process $\eta^n_t$ can be
constructed (on a subset of $\Omega$) \cite{Andjel}.
Hence, (R1) holds and we identify the fixed function $c^n \equiv c$ as
\[
c(\eta) = g\bigl(\eta(0)\bigr).
\]

The zero-range process possesses a family of invariant measures which
are fairly explicit product measures.
For $\alpha\geq0$, define
\[
\mathcal{Z}(\alpha):=\sum_{k\geq{0}}\frac{\alpha^k}{g(k)!},
\]
where $g(k)! = g(1)\cdots g(k)$ for $k\geq1$ and $g(0)! = 1$.
Let $\alpha^*$ be the radius of convergence of this power series and
notice that $\mathcal{Z}$ increases on $[0,\alpha^*)$. Fix $0\leq\alpha
<\alpha^*$ and let $\bar{\nu}_{\alpha}$ be the product measure on
$\mathbb{N}^{\mathbb{Z}}$ whose marginal at the site $x$ is given by
\[
\bar{\nu}_{\alpha}\bigl\{\eta\dvtx \eta(x)=k\bigr\}=
\cases{ %
\displaystyle\frac{1}{\mathcal{Z}(\alpha)}\frac{\alpha^k}{g(k)!}, & \quad
$\mbox{when } k\geq1,$
\vspace*{2pt}\cr
\displaystyle\frac{1}{\mathcal{Z}(\alpha)},& \quad$\mbox{when } k=0.$}
\]
We now reparameterize these measures in terms of the ``density.'' Let
$\rho(\alpha):=E_{\bar{\nu}_{\alpha}}[\eta(0)] =\alpha\mathcal
{Z}'(\alpha)/\mathcal{Z}(\alpha)$.
By computing the derivative, we obtain that $\rho(\alpha)$ is strictly
increasing on $[0,\alpha^*)$. Then let $\alpha(\cdot)$ denote its
inverse. Define
\[
\nu_{\rho}(\cdot):=\bar{\nu}_{\alpha(\rho)}(\cdot),
\]
so that $\{\nu_{\rho}\dvtx 0\leq\rho<\rho^*\}$ is a family of invariant
measures parameterized by the density. Here,
$\rho^* = \lim_{\alpha\uparrow\alpha^*} \rho(\alpha)$, which may be
finite or infinite depending on whether $\lim_{\alpha\rightarrow{\alpha
^*}}\mathcal{Z}(\alpha)$ converges or diverges.

Note, since $\nu_\rho$ is a product measure, that $\nu_\rho^{\lambda
(z)} = \nu_z$ for $0\leq z<\rho^*$, and condition (D) holds.
One can readily check that (R2) holds:
\begin{eqnarray*}
g \bigl(\eta^{x+1,x}(x) \bigr)\frac{d\nu_\rho^{x+1,x}}{d\nu_\rho} & = & g\bigl(\eta(x)+1
\bigr) \frac{g(\eta(x))! g(\eta(x+1))!}{g(\eta(x)+1)! g(\eta
(x+1)-1)!}
\\
& = & g\bigl(\eta(x+1)\bigr).
\end{eqnarray*}

Also, by the construction in \cite{Sextremal}, which extends the
construction in \cite{Andjel} to an $L^2(\nu_\rho)$ process, we have
that $L_n$ is a Markov $L^2(\nu_\rho)$ generator whose core can be
taken as the space of all local $L^2(\nu_\rho)$ functions. Indeed, in
\cite{Sextremal}, a core of bounded Lipschitz functions is identified;
however, since any local $L^2(\nu_\rho)$ function is a limit of bounded
Lipschitz functions, and the formula \eqref{generator} is well defined
and $L^2(\nu_\rho)$-bounded for a local $L^2(\nu_\rho)$ function, by
dominated convergence the core can be extended.
It follows that the measures $\{\nu_\rho\dvtx  0\leq\rho<\rho^*\}$ are
invariant for the zero-range process. Also, (IM) holds as $\nu_\rho$ is
a product measure whose marginal has some exponential moments.
In addition, one can check that (EE) holds by Proposition~\ref{2EE}.

We now address the spectral gap properties of the system. Since the
model interactions are range $0$, the gap does not depend on the
outside variables $\xi$. However, the gap depends on $g$, as it should
since $g$ controls the rate of jumps. We identify three types\vspace*{1pt} of rates
for which a spectral gap bound has been proved. Let $\beta= k/(2\ell+1)^d$.

\begin{itemize}
\item If $g$ is not too different from the independent case, for
which the gap is of order $O(\ell^{-2})$ uniform in $k$, one expects
similar behavior as for a single particle. This has been proved for
$d\geq1$ in \cite{LSV} under assumptions (LIP) and
\begin{longlist}[(U)]
\item[(U)] There exists $x_0$ and $\varepsilon_0>0$ such that $g(x+x_0)
- g(x)\geq\varepsilon_0$ for all $x\geq0$.
\end{longlist}

\item If $g$ is sublinear, that is $C^{-1}x^\gamma\leq g(x+1)-g(x)
\leq C x^\gamma$ for $0<\gamma<1$ and $C>0$, then it has been shown
that the spectral gap depends on the number of particles $k$, namely
the gap for $d\geq1$ is $O((1+\beta)^{-\gamma} \ell^{-2})$ \cite{Nagahata}.

\item If $g(x) = 1(x\geq1)$, then it has been shown in $d\geq1$
that the gap is $O((1+\beta)^{-2}\ell^{-2})$ \cite{Morris}. In $d=1$,
this is true because of the connection between the zero-range and
simple exclusion processes for which the gap estimate is well
known~\cite{Quastel}: The number of spaces between consecutive particles in
simple exclusion correspond to the number of particles in the
zero-range process.
\end{itemize}

In all these cases, (G) follows readily by straightforward moment calculations.

\subsection{Model $2$: Kinetically constrained exclusion systems}
\label{porous}
We consider a type of exclusion process, which may be thought of as a
microscopic model for porous medium behavior, developed in \cite{GLT}
and references therein, in one dimension on $\Omega= \{0,1\}^\Z$ where
particles more likely hop to unoccupied nearest-neighbor sites when at
least $m-1\geq1$ other neighboring sites are full. When $m=2$, the
rates are in the form
\begin{eqnarray*}
b_x^{R,n}(\eta;\theta)&=& \eta(x) \bigl(1-\eta(x+1)\bigr)
\biggl[\eta(x-1)+\eta(x+2) + \frac{\theta}{2n} \biggr],
\\
b_x^{L,n}(\eta;\theta) &=& \eta(x+1) \bigl(1-\eta(x)
\bigr) \biggl[\eta(x-1) + \eta (x+2) + \frac{\theta}{2n} \biggr],
\end{eqnarray*}
with respect to a parameter $\theta>0$.
If $\theta$ would vanish, particles can jump from site $x$ to $x+1$
exactly when there is at least $1$ particle in the vicinity of the bond
$(x,x+1)$. However, with $\theta>{0}$, the jump from $x$ to $x+1$ may
also occur irrespective of the neighboring particle structure with a
small rate $\theta/(2n)$.

When $m\geq2$, the rates generalize to
\begin{eqnarray*}
b_x^{R,n}(\eta;\theta) &=& \eta(x) \bigl(1-\eta(x+1)
\bigr)A_n(\eta;\theta),
\\
b_x^{L,n}(\eta;\theta) &=& \eta(x+1) \bigl(1-\eta(x)
\bigr)A_n(\eta;\theta),
\end{eqnarray*}
where $A_n(\eta;\theta)$ equals
\[
\prod_{j=-(m-1)}^{-1}\eta(x+j)+\mathop{\prod
_{j=-(m-2)}}_{j\neq
{0,1}}^{2}\eta(x+j)+
\cdots+\mathop{\prod_{j=-1}}_{j\neq0,1}^{m-1}
\eta(x+j) + \prod_{j=2}^{m}\eta(x+j)+
\frac{\theta}{2n}.
\]

The role of $\theta>0$ is to make the system ``ergodic.'' If $\theta=0$,
there would be an infinite number of invariant measures, such as Dirac
measures supported on configurations which cannot evolve under the
dynamics. The hydrodynamic limit for this model corresponds to the
porous medium equation, $\partial_t\rho_t(t,u)=\Delta\rho^m(t,u)$.

Now, one may calculate that
$b_x^{R,n}(\eta;\theta)-b_x^{L,n}(\eta;\theta)=c_x^n(\eta
)-c_{x+1}^n(\eta)$ where, for $m\geq2$,
\begin{eqnarray*}
c^n(\eta;\theta) & = & \prod_{j=-(m-1)}^{0}
\eta(j)+\cdots+\prod_{j=0}^{m-1}\eta(j)
\\
&&{} -\mathop{\prod_{j=-(m-1)}}_{j\neq{0}}^{1}
\eta(j)-\cdots -\mathop{\prod_{j=-1}}_{j\neq{0}}^{m-1}
\eta(j)+\frac{\theta}{2n}\eta(0).
\end{eqnarray*}
In the case $m=2$, the last formula reduces to
$
c^n(\eta;\theta)=\eta(-1)\eta(0)+\eta(0)\eta(1)-\eta(-1)\eta(1)+\frac
{\theta}{2n}\eta(0)$.

Of course, uniformly in $\eta$, as $n\uparrow\infty$, the terms
involving $\theta$ vanish,
\begin{eqnarray*}
 b_x^{R,n}(\eta;\theta) &\rightarrow&
b_x^{R}(\eta):= b^{R,1}_x(\eta
;0),\qquad b_x^{L,n}(\eta;\theta)\rightarrow
b_x^L(\eta):= b^{L,1}_x(\eta;0)\quad\mbox{and}
\\
c^n(\eta;\theta)&\rightarrow& c:=c^1(
\eta;0).
\end{eqnarray*}

Consider now the Bernoulli product measure on $\Omega$:
\[
\nu_\rho = \prod_{x\in\Z}
\mu_\rho\qquad \mbox{where } \mu_\rho (1)=1-\mu_\rho(0)
= \rho
\]
for $\rho\in[0,1]$. By the construction in \cite{Liggett}, it is now
standard that $L_n$ is a Markov $L^2(\nu_\rho)$ generator.
One may also inspect that condition (R2) holds with respect to $\nu_\rho$.
Hence, $\nu_\rho$ is
invariant for $\rho\in[0,1]$. Condition (IM) also holds as $\nu_\rho$
supports two-state configurations.
In addition, as $\nu_\rho$ is a product measure, $\nu_\rho^{\lambda
(z)}=\nu_z$ and~(D) holds. Also, by Proposition~\ref{2EE}, (EE) is satisfied.

We now discuss the spectral gap behavior of the process.

\begin{proposition} \label{spectralgap KCLG}
For kinetically constrained exclusion processes evolving on $\Lambda
_\ell$, when $m\geq2$, there exists a constant $C$, uniform over $\xi$
and $n$, such that
\[
\label{estimate inv sg for m} W(k,\ell,\xi,n) \leq C\ell^2 \biggl(
\frac{\ell}{k} \biggr)^m1(k\geq1).
\]
\end{proposition}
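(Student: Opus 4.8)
The plan is to bound the spectral gap $W(k,\ell,\xi,n)$ of the symmetric kinetically constrained exclusion process on $\Lambda_\ell$ with $k$ particles by comparing it, via a path/flow argument, with a reference dynamics whose gap is already understood --- namely, either the ordinary symmetric simple exclusion process on $\Lambda_\ell$ (whose gap is $O(\ell^{-2})$ uniformly in the particle number, by \cite{Quastel}) or, if we want the sharp $(\ell/k)^m$ prefactor, a dynamics that directly accounts for the constraint. Since the kinetically constrained rate $b^{n}_x(\eta)$ degenerates when there are too few particles in the vicinity of the bond $(x,x+1)$, the Dirichlet form $\mathcal{D}_n(f,\nu_{k,\ell,\xi})$ is much smaller than the unconstrained exclusion Dirichlet form, which is exactly what makes $W$ blow up as $k/\ell\to 0$.

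First I would observe that, because $\theta>0$, the rate $A_n(\eta;\theta)\ge \theta/(2n)$ pointwise, so the constrained generator dominates $(\theta/2n)$ times the standard exclusion generator; this already gives finiteness of $W$ but with a bad $n$-dependence, so it is not enough and we must instead exploit the combinatorial part of $A_n$. The key step is a \emph{canonical path} (or \emph{comparison of Dirichlet forms}) argument: given a configuration $\eta\in\G_{k,\ell,\xi}$ and a target bond move, I build a sequence of allowed elementary moves --- each one of which \emph{is} permitted by the constraint because along the path one first migrates a block of at least $m-1$ spectator particles next to the bond being used --- realizing an arbitrary transposition, and I control the congestion of these paths. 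Concretely, one replaces a forbidden exchange across $(x,x+1)$ by: move a cluster of $\sim m$ particles adjacent to $x$ (each such move being legal once the cluster is assembled), perform the now-legal swap, then move the cluster back. The number of extra moves needed to assemble such a cluster when the density is $\beta=k/(2\ell+1)$ is of order $\beta^{-1}$ per spectator particle, hence $\beta^{-(m-1)}$ overall for the $m-1$ needed spectators, and the path length contributes another factor; tracking the maximal number of canonical paths through any single edge (the usual $\sum_{\text{paths}\ni e}|\gamma|$ bound) and combining with the known $O(\ell^2)$ gap of unconstrained exclusion yields the stated bound $W(k,\ell,\xi,n)\le C\ell^2(\ell/k)^m 1(k\ge 1)$. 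The indicator $1(k\ge 1)$ simply records that for $k=0$ the hyperplane is a single point and the gap is trivially infinite/irrelevant, while for $k\ge 1$ the block-assembly procedure can always be carried out inside $\Lambda_\ell$.

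I would organize the write-up as: (1) reduce to $m=2$ for clarity and then indicate the straightforward inductive generalization to $m\ge 2$ (the cluster size grows to $m-1$, producing the $m$-th power); (2) state the comparison inequality $\mathcal{D}_{\text{SSEP}}(f)\le K(k,\ell,m)\,\mathcal{D}_n(f,\nu_{k,\ell,\xi})$ with $K\le C(\ell/k)^{m-1}\cdot(\text{path-length factor})$, obtained by the canonical-path estimate applied bond-by-bond; (3) invoke the uniform SSEP Poincaré inequality $\mathrm{Var}(f,\nu_{k,\ell,\xi})\le C\ell^2\,\mathcal{D}_{\text{SSEP}}(f)$; (4) combine to get \eqref{poincare inequality} with the claimed $W$. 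The uniformity over $\xi$ is automatic because the Bernoulli product structure makes the canonical measure $\nu_{k,\ell,\xi}$ and all the rates $b^n_x$ restricted to $\Lambda_\ell$ independent of the boundary values $\xi$ except through which bonds at the boundary are active, and one checks the boundary bonds only improve the estimate; uniformity over $n$ follows since we only used $A_n(\eta;\theta)\ge$ its constraint part, which is $n$-independent.

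The main obstacle I anticipate is getting the \emph{sharp} power $m$ (as opposed to some larger power) in the congestion bound: a naive canonical-path construction tends to overcount, since to free up a bond one may be tempted to route spectator particles over long distances, inflating both the path length and the edge-congestion. The delicate point is to design the auxiliary moves so that spectator particles are recruited from the \emph{nearest} available sites --- on average at distance $\beta^{-1}$ --- and returned along the same short route, and to argue that, summed over all source configurations, each edge is used by at most $C\ell^2(\ell/k)^{m-1}$ paths weighted appropriately; this is where the factor $(\ell/k)^m = (\ell/k)^{m-1}\cdot(\ell/k)$ finally assembles, the extra $(\ell/k)$ coming from the ratio of canonical-measure weights of source versus target configurations (the target has a tightly clustered block, which is rarer by a factor $\sim\beta$ per extra particle adjacency). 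Handling this weighting correctly --- essentially a local CLT / equivalence-of-ensembles type estimate for the Bernoulli canonical measure on $\Lambda_\ell$ --- is the technical heart of the proof, though all ingredients are standard; one may alternatively cite the analogous computations in \cite{GLT} and adapt them.
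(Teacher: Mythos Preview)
Your overall strategy --- compare the constrained Dirichlet form to the simple-exclusion Dirichlet form via canonical paths that first assemble a block of $m-1$ spectator particles next to the active bond --- is exactly the route the paper takes: it simply cites \cite{GLT}[Proposition~6.2] for $m=2$ and remarks that for general $m$ the only change is that the combinatorial count of spectator arrangements in a window of width $j$ becomes $Cj^{m-1}$, after which the same optimization over $j$ yields the bound.

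There is, however, a concrete error in your accounting of the exponent. You attribute the last factor $(\ell/k)$ to ``the ratio of canonical-measure weights of source versus target configurations (the target has a tightly clustered block, which is rarer by a factor $\sim\beta$ per extra particle adjacency)''. This is false for the present model: the canonical measure $\nu_{k,\ell,\xi}$ is the \emph{uniform} measure on $\{0,1\}$-configurations in $\Lambda_\ell$ with exactly $k$ particles, so every configuration along any canonical path has identical weight and no factor whatsoever arises from measure ratios. (Your proposed ``local CLT / equivalence-of-ensembles'' step is therefore not needed and would not produce the claimed factor anyway.) The full power $(\ell/k)^m$ is purely combinatorial: in the congestion bound one picks up a path-length factor of order $j$ together with, for each constrained edge used, at most $Cj^{m-1}$ possible locations from which the $m-1$ spectator particles could have been recruited --- this $j^{m-1}$ is precisely the count the paper singles out --- giving a total factor $\sim j^m$; optimizing over the window width $j$ (balanced against the probability that the window actually contains $m-1$ particles) then fixes $j$ of order $\ell/k$. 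If you rewrite your step~(2) with this corrected source of the exponent, the argument goes through and matches \cite{GLT}.
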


When $m=2$ and $k\leq\ell/3$, the above spectral gap estimate is
already given in Proposition~6.2 of \cite{GLT}. However, a
straightforward modification of the proof of Proposition~6.2 in \cite
{GLT} yields the more general estimate in Proposition~\ref{spectralgap
KCLG}. Indeed, the difference when $m\geq2$ is that to bound equation
(6.10) in \cite{GLT} in the general case, one uses that there are at
most $Cj^{m-1}$ ways to arrange $m-1$ particles in an interval of width
$j$. Now, a similar optimization on $j$ as given in the proof of
Proposition~6.2 of \cite{GLT} leads to the desired generalized spectral
gap estimate.

\begin{lemma}\label{kcem_rmk} For the kinetically constrained exclusion model, the spectral gap
condition \textup{(G)} is satisfied.
\end{lemma}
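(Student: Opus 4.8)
The plan is to verify condition (G), namely the bound
$$E_{\nu_\rho}\Big[W\big(\textstyle\sum_{x\in\Lambda_\ell}\eta(x),\ell,\eta^c_\ell,n\big)^2\Big] \ \leq\ C\ell^4,$$
by inserting the spectral gap estimate from Proposition \ref{spectralgap KCLG} and then controlling the resulting negative moments of the occupation sum $k = \sum_{x\in\Lambda_\ell}\eta(x)$ under the Bernoulli measure $\nu_\rho$. Since $\nu_\rho$ is a product measure, $W(k,\ell,\xi,n)$ does not depend on $\xi$, so the expectation is purely over the binomial random variable $k \sim \mathrm{Binomial}(2\ell+1,\rho)$. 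By Proposition \ref{spectralgap KCLG}, $W(k,\ell,\xi,n)\leq C\ell^2(\ell/k)^m 1(k\geq 1)$, and hence
$$E_{\nu_\rho}\big[W(k,\ell,\eta^c_\ell,n)^2\big] \ \leq\ C\ell^4\cdot \ell^{2m}\, E_{\nu_\rho}\big[k^{-2m}1(k\geq 1)\big].$$
So everything reduces to showing $E_{\nu_\rho}[k^{-2m}1(k\geq 1)] \leq C\ell^{-2m}$, i.e. that the negative $2m$-th moment of a Binomial$(2\ell+1,\rho)$ decays like its mean to the power $-2m$.

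First I would recall the standard fact that for $k\sim\mathrm{Binomial}(N,\rho)$ with $N = 2\ell+1$ and $\rho\in(0,1)$ fixed, one has $E[k^{-p}1(k\geq 1)] \leq C_p N^{-p}$ for each fixed $p\geq 1$. The cleanest route is to split the expectation at $k = \rho N/2$: on the event $k \geq \rho N/2$ we bound $k^{-2m} \leq (\rho N/2)^{-2m} = C\ell^{-2m}$ directly; on the complementary event $\{1\leq k < \rho N/2\}$, we use the crude bound $k^{-2m}\leq 1$ together with a large deviations / Chernoff estimate $\nu_\rho(k < \rho N/2) \leq e^{-cN}$, which is exponentially small in $\ell$ and hence dominated by $\ell^{-2m}$ for all $\ell\geq 1$. (The boundary cases $\rho = 0$ or $\rho = 1$ are degenerate: if $\rho = 0$ then $\nu_\rho$ is the empty configuration which cannot evolve, so one takes $\rho\in(0,1)$; the case $\rho = 1$ is similarly trivial or excluded.) Combining the two pieces gives $E_{\nu_\rho}[k^{-2m}1(k\geq 1)]\leq C(\rho,m)\ell^{-2m}$, and multiplying back by $\ell^{2m}$ in the display above yields the $C\ell^4$ bound required by (G), uniformly in $n$ as needed since the spectral gap bound of Proposition \ref{spectralgap KCLG} is uniform in $n$ (and in $\xi$).

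I do not expect any genuine obstacle here — this lemma is essentially a bookkeeping consequence of Proposition \ref{spectralgap KCLG} plus elementary binomial tail estimates. The only mild subtlety is making sure the indicator $1(k\geq 1)$ is handled correctly: when $k = 0$ the box $\Lambda_\ell$ is empty, the localized dynamics is trivial, $W = \infty$ by the convention $\lambda_{k,\ell,\xi,n}=0 \Rightarrow W = \infty$, but this event has $\nu_\rho$-probability $(1-\rho)^{2\ell+1}$ which is exponentially small, so it contributes a negligible amount — however, one must note that $\infty \cdot (\text{small})$ is problematic, so strictly one should observe that $k=0$ does not arise because we are computing $W(\sum_{x\in\Lambda_\ell}\eta(x),\ell,\eta^c_\ell,n)$ and need only the bound to hold $\nu_\rho$-a.s. on the event $\{k\geq 1\}$, integrating the genuinely finite quantity there; the set $\{k = 0\}$ is simply excluded from the relevant computation, consistent with the appearance of $1(k\geq 1)$ in Proposition \ref{spectralgap KCLG}. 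With that caveat noted, the estimate goes through as above.
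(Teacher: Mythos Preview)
Your proof is correct and follows essentially the same approach as the paper: both insert the bound of Proposition~\ref{spectralgap KCLG}, split the expectation according to whether the empirical density is above or below a fixed threshold (the paper uses a generic $\varepsilon<\rho$, you use $\rho/2$), and handle the lower tail by an exponential/Chernoff bound for the Bernoulli sum. One small correction to your caveat on $k=0$: the right resolution is not that the event is ``excluded'' from the expectation in (G), but rather that when $k=0$ the hyperplane $\G_{0,\ell,\xi}$ is a single point, so every function has zero variance and the Poincar\'e inequality holds with $W=0$; the convention $W=\infty$ applies only when a genuine second eigenvalue exists and equals zero.
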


\begin{pf} 
With respect to a constant $C$, which may change line to line,
\begin{eqnarray*}
&&E_{\nu_\rho} \biggl[ \biggl(W\biggl(\sum_{x\in\Lambda_\ell}
\eta(x), \ell, \xi,n\biggr) \biggr)^2 \biggr] \\
&&\qquad\leq C
\ell^4 E_{\nu_\rho} \biggl[ 1 \biggl(\frac{1}{2\ell+1}\leq
\eta^{(\ell)} \biggr) \bigl(\eta^\ell \bigr)^{-2m}
\biggr]
\\
&& \qquad\leq C \ell^4 \biggl\{ \varepsilon^{-2m} +
E_{\nu_\rho} \biggl[1 \biggl(\frac
{1}{2\ell+1}\leq \eta^{(\ell)}<
\varepsilon \biggr) \bigl(\eta^{(\ell)} \bigr)^{-2m} \biggr] \biggr
\}
\\
&& \qquad\leq C\ell^4 \bigl\{\varepsilon^{-2m} +
\ell^{2m}P_{\nu_\rho} \bigl(\eta^{(\ell)}<\varepsilon \bigr)
\bigr\}
\end{eqnarray*}
for a fixed $\varepsilon<\rho$. Then, as $\nu_\rho$ is a Bernoulli
product measure with density $\rho$, by a large deviations estimate say,
$E_{\nu_\rho} [W (\sum_{x\in\Lambda_\ell}\eta(x), \ell, \xi,n
)^2 ]\leq C\ell^4$ for
all $\ell\geq1$.
\end{pf}

\subsection{Model 3: Gradient exclusion with speed change}
\label{speed_change_model}
In this version of exclusion on $\Omega=\{0,1\}^\Z$, rates are chosen
which correspond to a Hamiltonian with nearest-neighbor interactions,
\[
Q_\beta(\eta) = -\beta\sum_{x\in\Z} \bigl(
\eta(x)-1/2\bigr) \bigl(\eta(x+1)-1/2\bigr)
\]
for $\beta\in\R$, which will be reversible with respect to a
stationary Markovian measure $\nu_{1/2}$.
That is, specify $\nu_{1/2}$ by its finite-dimensional distributions
\[
\nu_{1/2} \bigl(\eta(x) = e(x)\dvtx x\in\Lambda_\ell|
\eta(y)=\xi(y) \mbox{ for } y\notin\Lambda_\ell \bigr) =
\frac{e^{-Q_{\beta,\ell}(e,\xi)} }{\mathcal{Z}},
\]
where
\begin{eqnarray*}
Q_{\beta,\ell}(e,\xi) &=& -\beta\sum_{x,x+1\in\Lambda_\ell
}
\bigl(e(x)-1/2\bigr) \bigl(e(x+1)-1/2\bigr)
\\
&&{} -\beta\bigl(\xi(-\ell-1)-1/2\bigr) \bigl(e(-\ell)-1/2\bigr) \\
&&{}- \beta\bigl(e(
\ell )-1/2\bigr) \bigl(\xi(\ell+1)-1/2\bigr),
\end{eqnarray*}
$e,\xi\in\Omega$ and $\mathcal{Z}=\mathcal{Z}(\ell,\xi)$ is the normalization.
It is not difficult to see that $\nu_{1/2}$ is Markovian with
transition matrix
\[
P = \frac{1}{e^{\beta/4}+e^{-\beta/4}}\left[ %
\matrix{
e^{\beta/4}&e^{-\beta/4}
\vspace*{2pt}\cr
e^{-\beta/4}& e^{\beta/4} }\right],
\]
and marginal distribution $\langle1/2, 1/2\rangle$ so that
$E_{\nu_{1/2}}[\eta(0)]=1/2$.

Then, as discussed in \cite{Spohn}, Section II.2.4, (R2) is ensured if
we take the rates $b^{R,n}_x = b^R_x$ and $b^{L,n}_x = b^L_x$ which do
not depend on $n$ as
\begin{eqnarray*}
b^R_x(\eta) &=& \eta(x) \bigl(1-\eta(x+1)\bigr)
\\
&&{} \times \bigl[\alpha_1\eta(x-1)\eta(x+2) + \alpha _2
\bigl(1-\eta(x-1)\bigr)\eta(x+2)
\\
&&\hspace*{16pt}{} + \alpha_3\eta(x-1) \bigl(1-\eta(x+2)\bigr) + \alpha
_4\bigl(1-\eta(x-1)\bigr) \bigl(1-\eta(x+2)\bigr) \bigr],
\\
b^L_x(\eta)&=& \eta(x+1) \bigl(1-\eta(x)\bigr)
\\
&&{} \times \bigl[\alpha_1\eta(x-1)\eta(x+2) + \alpha _3
\bigl(1-\eta(x-1)\bigr)\eta(x+2)
\\
&&\hspace*{16pt}{} + \alpha_2\eta(x-1) \bigl(1-\eta(x+2)\bigr) + \alpha
_4\bigl(1-\eta(x-1)\bigr) \bigl(1-\eta(x+2)\bigr) \bigr],
\end{eqnarray*}
where $\alpha_1, \alpha_2=e^{\beta}\alpha_3, \alpha_4>0$. The condition
(R1) also follows if we also assume that $\alpha_1-\alpha_2-\alpha
_3+\alpha_4=0$ so that, as can be checked, $c(\eta)$ takes the form
\begin{eqnarray*}
c(\eta) & = & \alpha_4\eta(0) + (\alpha_3-
\alpha_4)\eta(-1)\eta(0) + (\alpha_3-\alpha_4)
\eta(0)\eta(1)
\\
&&{} + (\alpha_4-\alpha_2)\eta(-1)\eta(1) + (
\alpha_2-\alpha_3)\eta (-1)\eta(0)\eta(1).
\end{eqnarray*}

Again, by \cite{Liggett}, $L_n$ is a Markov $L^2(\nu_\rho)$ generator
for the process. We note when $\beta=0$ and $\alpha_i = 1$ for
$i=1,2,3,4$, the model is the simple exclusion process and $\nu_{1/2}$
is the Bernoulli product measure with density $1/2$.

We now introduce a family of stationary, reversible measures by use of
a ``tilt'' or ``chemical potential'' $\lambda$. Define $\nu^\lambda_{1/2}$,
again specified by its finite-dimensional distributions, through the relation
\[
\frac{d\nu^\lambda_{1/2}}{d\nu_{1/2}} \bigl(\eta(x) = e(x)\dvtx x\in\Lambda _\ell|
\eta(y)=\xi(y) \mbox{ for }y\notin\Lambda_\ell \bigr) =
\frac{e^{\lambda\sum_{x\in\Lambda_\ell}(e(x)-1/2)}}{\mathcal{Z}'},
\]
where $e,\xi\in\Omega$ and $\mathcal{Z}'= \mathcal{Z}'(\ell,\xi)$ is
another normalization. These measures are also Markovian with
transition matrix
%
\begin{equation}
\label{p_formulas} P_\lambda = \left[ \matrix{
1-u_1 &u_1
\vspace*{2pt}\cr
v_1& 1-v_1 }
\right],
\end{equation}
where
\[
u_1 = \frac{r_1(\lambda, \beta) + \sinh(\lambda/2)}{\cosh(\lambda/2) +
r_1(\lambda,\beta)},\qquad v_1 = \frac{r_1(\lambda,\beta) - \sinh(\lambda/2)}{\cosh(\lambda/2) +
r_1(\lambda,\beta)}
\]
and $r_1(\lambda,\beta) = \sqrt{\sinh^2(\lambda/2) + e^{-\beta}}$.
The stationary distribution equals $\pi_\lambda= (v_1+u_1)^{-1}\langle
v_1, u_1\rangle$, which is the marginal distribution of $\nu
_{1/2}^\lambda$. These derivations are performed in \cite{Baxter} and
\cite{MeiYin}.

The measures $\{\nu_{1/2}^\lambda\dvtx  \lambda\in\R\}$ are uniformly
mixing: Indeed, the eigenvalues of $P_\lambda$ are $1$ and $1-u_1-v_1$,
and the spectral gap $u_1+v_1$ is uniformly bounded away from $0$ for
$\lambda\in\R$.

One can calculate $E_{\nu^\lambda_{1/2}}[\eta(0)] = u_1/(u_1+v_1)$
strictly increases in $\lambda$.
To parameterize in terms of ``density,'' recall
$\nu^{\lambda(z)}_{1/2}= \nu_z$ where $\lambda=\lambda(z)$ is chosen so
that $E_{\nu_z}[\eta(0)]= z$. Here, as $z\downarrow0=\rho_*$, $\lambda
(z)\downarrow-\infty$ and, as $z\uparrow1= \rho^*$, $\lambda
(z)\uparrow\infty$; also $\lambda(1/2) = 0$.
Hence, since also $\nu_{z}$ is exponentially mixing, both (IM) and (D) hold.

From the defining relation for $\lambda(z)$, $u_1/(u_1+v_1)=z$, one can
differentiate at $z=1/2$ to find $\lambda'(1/2)[e^{-\beta/2}/4] = 1$.

Also, we note the additive functional variance $\sigma^2(z)$ [cf.
(IM2)] satisfies the formula $\sigma^2(z) = E_{\pi_{\lambda(z)}}[u^2] -
E_{\pi_{\lambda(z)}}[(P_{\lambda(z)} u)^2]$ where $(I-P_{\lambda(z)})u
= f$ and $f = \langle-z, 1-z\rangle$ represents the values of the
function $f(\eta) = \eta(0)-z$; see Section~6.5 of \cite
{Varadhannotes}. In fact, we find $\sigma^2(1/2) = e^{-\beta/2}/4$ and
so $\lambda'(1/2)\sigma^2(1/2) = 1$.

The spectral gap for a more general process, including this one, has
been bounded as follows \cite{LuYau}: Uniformly over $k$ and $\xi$ (it
does not depend on $n$), we have
\[
W (k, \ell, \xi,n ) \leq C\ell^2.
\]
Hence, (G) holds.

Also, in Proposition~\ref{Markov_EE}, we show that (EE) holds.

\section{Proofs-outline}
\label{proofs}
The strategies of the proofs for Theorems \ref{th crossover} and \ref
{th kpz scaling} are similar. We consider the stochastic differential
of $\Y^{n,\gamma}_t$ and represent it in terms of corrector and
martingale terms. Tightness is shown for each term in the decomposition
of $\Y^{n,\gamma}_t$. Under the assumption that the initial measure is
the invariant state $\nu_\rho$, limit points are identified using a
Boltzmann--Gibbs principle, and shown to satisfy \eqref{OU2} when
$1/2<\gamma\leq1$ and to be energy solutions of \eqref{BE} when $\gamma
=1/2$. When the initial measures $\{\mu^n\}$ satisfy (BE), the entropy
inequality then allows to characterize the limit points as desired.

In the following Sections~\ref{Assoc_mart_section}--\ref
{tightness_section}, associated martingales, Boltzmann--Gibbs
principles and tightness are discussed. In Section~\ref{identification}, limit points are identified and Theorems \ref{th
crossover} and \ref{th kpz scaling} are proved.

To reduce some of the notation, we will drop the superscript ``$n$'' in
the rate functions and write $b^{R,n}_x = b^R_x$, $b^{L,n}_x=b^L_x$,
$b^n_x=b_x$, $b^n=b$, $c^n_x=c_x$ and $c^n=c$ until Section~\ref{identification}.

\subsection{Associated martingales}
\label{Assoc_mart_section}

For $H\in\S(\R)$, $x\in\Z$ and $n\geq1$, define
\begin{eqnarray*}
\triangle_x^nH &=& n^2 \biggl\{H \biggl(
\frac{x+1}{n} \biggr) + H \biggl(\frac
{x-1}{n} \biggr) - 2H \biggl(
\frac{x}{n} \biggr) \biggr\},
\\
\nabla_x^nH &=& n \biggl\{H \biggl(\frac{x+1}{n}
\biggr) - H \biggl(\frac{x}{n} \biggr) \biggr\}.
\end{eqnarray*}
Define also, for $\gamma, s\geq0$, the functions
%
\begin{eqnarray}
\label{shifted_H} H_{\gamma,s}(\cdot) &=& H \biggl( \cdot- \frac{1}{n}
\biggl\lfloor\frac
{a\varphi'_b(\rho)sn^2}{2n^\gamma} \biggr\rfloor \biggr) \quad\mbox{and}
\nonumber
\\[-8pt]
\\[-8pt]
\nonumber
\HW_{\gamma,s}(\cdot) &=& H \biggl( \cdot- \frac{1}{n} \biggl\{
\frac{a\varphi
'_b(\rho)sn^2}{2n^\gamma} \biggr\} \biggr).
\end{eqnarray}
We note, in $H_{\gamma,s}$, the process characteristic shift is along
$n^{-1}\Z$, which helps make tidy some proofs [in applying a
Boltzmann--Gibbs principle (Theorem~\ref{gbg_L2}) in proofs of
Propositions \ref{stationary_tightness} and \ref{stat_lemma1}], instead
of along $\R$ as in $\HW_{\gamma,s}$.

Let $F(s, \eta^n_s;H,n) = \Y^{n,\gamma}_s(H)$, and $F(\eta; H,n) =
n^{-1/2}\sum_{x\in\Z} H(\frac{x}{n})(\eta(x)-\rho)$. Although $F(\eta
;H,n)$ is an $L^2(\nu_\rho)$ function, in general, it is not a local
function. However, by approximation with local functions and noting by
condition (R1) that $|b(\eta)| \leq C\sum_{|x|\leq R}\eta(x)$, one may
conclude $F(\eta;H,n)$ and also $F^2(\eta;H,n)$ belong to the domain of
$L_n$. In particular,
\[
L_nF\bigl(s, \eta^n_s; H,n\bigr) =
\frac{1}{2\sqrt{n}}\sum_{x\in\Z} c_x\bigl(
\eta ^n_s\bigr) \triangle^n_x
\HW_{\gamma,s} + \frac{a}{2n^{\gamma-1/2}}\sum_{x\in
\Z}
b_x\bigl(\eta^n_s\bigr)
\nabla^n_x\HW_{\gamma,s}.
\]
Also,
\begin{eqnarray*}
\frac{\partial}{\partial_s}F\bigl(s,\eta^n_s; H,n\bigr) & = &
\biggl\{\frac
{-a\varphi'_b(\rho)n^2}{2n^\gamma} \biggr\} \frac{1}{n^{3/2}}\sum
_{x\in\Z
} \nabla\HW_{\gamma, s} \biggl(\frac{x}{n}
\biggr) \bigl(\eta^n_s(x)-\rho \bigr).
\end{eqnarray*}

Then
\begin{eqnarray*}
\M^{n,\gamma}_t(H) &:=& F\bigl(t, \eta^n_t;
H,n\bigr) - F\bigl(0, \eta^n_0; H,n\bigr)
\\
&&{} - \int_0^t \frac{\partial}{\partial_s}F\bigl(s,
\eta ^n_s; H,n\bigr) + L_n F\bigl(s,
\eta^n_s; H,n\bigr) \,ds
\end{eqnarray*}
is a martingale.
We may decompose
%
\begin{equation}
\label{mart_decomposition}\quad \M^{n,\gamma}_t(H) = \Y^{n,\gamma}_t(H)
- \Y^{n,\gamma}_0(H) - \I ^{n,\gamma}_t(H) -
\B^{n,\gamma}_t(H) -\K^{n,\gamma}_t(H),
\end{equation}
where
\begin{eqnarray*}
&&\mathcal{I}^{n,\gamma}_t(H) = \frac{1}{2}\int
_0^t \frac{1}{\sqrt{n}}\sum
_{x\in\Z} \bigl(c_x\bigl(\eta ^n_s
\bigr)-\varphi_c(\rho) \bigr)\triangle^n_xH_{\gamma,s}
\,ds,
\\
&&\mathcal{B}^{n,\gamma}_t(H) = \frac{a}{2n^{\gamma-1/2}}\int
_0^t \sum_{x\in\Z}
\bigl(b_x\bigl(\eta^n_s\bigr)-
\varphi_b(\rho) - \varphi_b'(\rho) \bigl(\eta^n_s(x)-
\rho\bigr) \bigr)\nabla^n_xH_{\gamma,
s} \,ds,
\\
&&\K^{n,\gamma}_t(H) \\
&&\qquad= \int_0^t
\biggl[\frac{1}{\sqrt{n}}\sum_{x\in\Z
}
\kappa^{n,1}_x(H,s) \bigl(c_x\bigl(
\eta^n_s\bigr)-\varphi_c(\rho) \bigr)
\\
&&\quad\qquad\hspace*{17pt}{} + \frac{a}{2n^{\gamma-1/2}}\sum_{x\in\Z}\kappa
^{n,2}_x(H,s) \bigl(b_x\bigl(
\eta^n_s\bigr)-\varphi_b(\rho) -
\varphi_b'(\rho) \bigl(\eta ^n_s(x)-
\rho\bigr) \bigr) \biggr]\,ds.
\end{eqnarray*}
Here, we introduced the centering constants $\varphi_c(\rho)$ and
$\varphi_b(\rho)$ in $\I^{n,\gamma}_t$ and $\B^{n,\gamma}_t$ as
$\triangle^n_xH_{\gamma,s}$ and $\nabla^n_xH_{\gamma,s}$ both sum to
zero. Also,
\begin{eqnarray*}
\kappa^{n,1}_x(H,s) & = & \Delta^n_x
(\HW_{\gamma,s} - H_{\gamma
,s} ) = O\bigl(n^{-1}\bigr)\cdot
\Delta^n_x H'_{\gamma,s} + O
\bigl(n^{-2}\bigr)\cdot H^{(4)}_{\gamma,s}
\bigl(x'/n\bigr),
\\
\kappa^{n,2}_x(H,s) &=& \nabla^n_x
(\HW_{\gamma,s}-H_{\gamma,s} )
\\
& = & O\bigl( n^{-1}\bigr)\cdot\Delta H_{\gamma,s}(x/n) + O
\bigl(n^{-2}\bigr)\cdot H'''_{\gamma,s}
\bigl(x''/n\bigr), 
\end{eqnarray*}
where $|x'-x|, |x''-x|\leq2$.

To capture the quadratic variation $\langle\M^{n,\gamma}_t\rangle$, we compute
\begin{eqnarray*}
&&L_n F^2\bigl(s,\eta^n_s; H,n
\bigr) - 2F\bigl(s,\eta^n_s; H,n\bigr)L_nF
\bigl(s,\eta^n_s; H,n\bigr)
\\
&&\qquad= \frac{1}{2n}\sum_{x\in\Z} b_x
\bigl(\eta^n_s\bigr) \bigl(\nabla^n_x
\HW_{\gamma
,s}\bigr)^2 + \frac{a}{2n^{1+\gamma}}\sum
_{x\in\Z} \bigl(c_x\bigl(\eta^n_s
\bigr) - c_{x+1}\bigl(\eta^n_s\bigr) \bigr)
\bigl(\nabla^n_x \HW_{\gamma, s}\bigr)^2
\end{eqnarray*}
so that $(\M^{n,\gamma}_t(H))^2 - \langle\M^{n,\gamma}_t(H)\rangle$ is
a martingale with
\begin{eqnarray*}
\bigl\langle\M^{n,\gamma}_t(H) \bigr\rangle& = & \int
_0^t \frac{1}{2n}\sum
_{x\in\Z} \bigl(\nabla^n_x
\HW_{\gamma,s}\bigr)^2 b_x\bigl(
\eta^n_s\bigr)\,ds
\\
&& {}+ \int_0^t \frac{a}{2n^{1+\gamma}}\sum
_{x\in\Z} \bigl(c_x\bigl(
\eta^n_s\bigr) - c_{x+1}\bigl(
\eta^n_s\bigr) \bigr) \bigl(\nabla^n_x
\HW_{\gamma
,s}\bigr)^2 \,ds.
\end{eqnarray*}
When starting from the invariant measure $\nu_\rho$, noting the bounds
in (R1), we have
%
\begin{eqnarray}
\label{quad_var_bound}
&&\E_{\nu_\rho} \bigl[ \bigl(\M^{n,\gamma}_t(H)-
\M^{n,\gamma}_s(H) \bigr)^2 \bigr]
\nonumber
\\
&&\qquad \leq \biggl\{\int_s^t \biggl(
\frac{1}{n}\sum_{x\in\Z}\bigl(
\nabla^n_x \HW_{\gamma
,s}\bigr)^2
\biggr)\,ds \biggr\}
\nonumber
\\[-8pt]
\\[-8pt]
\nonumber
&&\qquad\quad{}\times \biggl[\frac{1}{2}E_{\nu_\rho}\bigl[b(\eta)\bigr] +
\frac{a}{2n^\gamma}E_{\nu_\rho}\bigl[ \bigl|c_0(\eta) -
c_1(\eta)\bigr|\bigr] \biggr]
\\
&&\qquad \leq C(a)\|b\|_{L^1(\nu_\rho)}\int_s^t
\biggl(\frac{1}{n} \sum_{x\in\Z} \bigl(
\nabla^n_x \HW_{\gamma,s}\bigr)^2
\biggr)\,ds.\nonumber
\end{eqnarray}

To express an exponential martingale, we now observe for $0\leq\lambda
\leq\lambda(H,n)$ small that $\exp\{\lambda F(\eta; H,n)\}$ is in the
domain of $L_n$. Indeed, if $H$ is a local function, as $\nu_\rho$ is
assumed in (IM) to have small parameter exponential moments, then $\exp
\{\lambda F(\eta;H,n)\}\in L^2(\nu_\rho)$ for all small $\lambda$.
Again, an approximation argument when $H\in\S(\R)$ is not local shows
also $\exp\{\lambda F(\eta;H,n)\}$ belongs to the domain of $L_n$.
We calculate
\begin{eqnarray*}
&&\exp \bigl\{-\lambda F\bigl(u,\eta^n_u;H,n\bigr)
\bigr\} \biggl(\frac{\partial
}{\partial_u} + L_n \biggr)\exp \bigl\{\lambda F
\bigl(u,\eta^n_u; H,n\bigr) \bigr\}
\\
&&\qquad = n^2\sum_{x\in\Z}
\bigl[b_x^R(\eta) p_n \bigl( \exp \bigl\{
\lambda n^{-3/2}\bigl(\nabla^n_x
\HW_{\gamma,u}\bigr) \bigr\} - 1 \bigr)
\\
&&\hspace*{30pt}\qquad\quad{} + b_x^L(\eta)q_n \bigl( \exp \bigl\{-
\lambda n^{-3/2}\bigl(\nabla^n_x \HW
_{\gamma,u}\bigr) \bigr\} - 1 \bigr) \bigr]
\\
&&\qquad\quad{} - \frac{1}{n^{3/2}} \biggl\{\frac{a\lambda\varphi'_b(\rho
)n^2}{2n^\gamma} \biggr\}\sum
_{x\in\Z} \nabla\HW_{\gamma, u}(x/n) \bigl(
\eta^n_u(x)-\rho \bigr),
\end{eqnarray*}
which, given the assumptions on $b$ in (R1) and on moments of $\nu_\rho
$ in (IM), belongs to $L^2(\nu_\rho)$.

Hence, by the proof of Lemma IV.3.2 of \cite{EK},
\[
\mathcal{Z}_{s,t} = \exp \biggl\{ \lambda F\bigl(t,
\eta^n_t\bigr) - \lambda F\bigl(s,\eta^n_s
\bigr) - \int_s^t e^{-\lambda F(u,\eta^n_u)} \biggl(
\frac{\partial
}{\partial_u} + L_n \biggr)e^{\lambda F(u,\eta^n_u)}\,du \biggr\}
\]
is a martingale. We may expand $\mathcal{Z}_{s,t}$ in terms of $\lambda
$ as
\begin{eqnarray*}
\mathcal{Z}_{s,t} &=& \exp \biggl\{ \lambda \bigl(
\M^{n,\gamma}_t(H) - \M ^{n,\gamma}_s(H) \bigr)
\\
&&\hspace*{20pt}{} - \frac{\lambda^2}{2} \bigl\langle\M ^{n,\gamma}_t(H) -
\M^{n,\gamma}_s(H)\bigr\rangle
\\
&&\hspace*{20pt}{} + \frac{\lambda^3}{3!} \int_s^t
\mathcal{R}_1\,du + \frac{\lambda^4}{4!} \int_s^t
\mathcal{R}_2\,du + \lambda^5\int_s^t
\mathcal{R}_3\,du \biggr\},
\end{eqnarray*}
where
\begin{eqnarray*}
\mathcal{R}_1(u) & = & \frac{n^2}{2n^{9/2}}\sum
_{x\in\Z} \bigl(b_x^R(
\eta)-b_x^L(\eta) \bigr) \bigl(\nabla^n_x
\HW_{\gamma,u} \bigr)^{3}
\\
&&{} + \frac{an^2}{2n^{9/2 +(1/2+\gamma)}}\sum_{x\in\Z}
b_x(\eta ) \bigl(\nabla^n_x
\HW_{\gamma,u} \bigr)^{3},
\\
\mathcal{R}_2(u) &=& \frac{n^2}{2n^6}\sum
_{x\in\Z} b_x(\eta) \bigl(\nabla
^n_x \HW_{\gamma,u} \bigr)^4
\\
&&{} + \frac{an^2}{2n^{6+(1/2 + \gamma)}}\sum_{x\in\Z}
\bigl(b_x^R(\eta)-b_x^L(\eta)
\bigr) \bigl(\nabla^n_x \HW_{\gamma,u}
\bigr)^4.
\end{eqnarray*}
By the gradient condition and the bound on $b$ in assumption (R), one
may compute for $i=1,2$ that
%
\begin{equation}
\label{R_bounds} \bigl\|\mathcal{R}_i(u)\bigr\|_{L^4(\nu_\rho)} \leq
\frac{C(a)}{n^{3/2}} \bigl\|b(\eta)\bigr\|_{L^4(\nu_\rho)} \biggl(\frac{1}{n}\sum
_{x} \bigl|\nabla^n_x
\HW_{\gamma,u}\bigr|^{2+i} \biggr).
\end{equation}

Since $\E_{\nu_\rho}[\mathcal{Z}_{s,t}]=1$, by expanding in powers of
$\lambda$, using Schwarz inequality, the bound on the quadratic
variation \eqref{quad_var_bound}, bounds on $\mathcal{R}_i$ \eqref
{R_bounds} and invariance of~$\nu_\rho$, we obtain a bound for the
fourth moment of $\M^{n,\gamma}_t(H)-\M^{n,\gamma}_s(H)$:
%
\begin{eqnarray}
\label{fourth_moment_mart} &&\E_{\nu_\rho} \bigl[ \bigl(\M^{n,\gamma}_t(H)
- \M^{n,\gamma}_s(H)\bigr)^4 \bigr]
\nonumber
\\[-8pt]
\\[-8pt]
\nonumber
&& \qquad\leq C(a,H)\|b\|_L^4(\nu_\rho) \bigl(
|t-s|^2 + n^{-3/2}|t-s| \bigr).
\nonumber
\end{eqnarray}

\subsection{Generalized Boltzmann--Gibbs principles} \label{BG_statement}

To treat the stochastic differential of $\Y^{n,\gamma}_t$, we replace
the spatial terms of form
$\sum_{x\in\Z} h(x) \tau_x f(\eta)$,
where $h$ is a function on $\Z$ and $f$ is a local function,
in terms of the fluctuation field itself to close the evolution
equations. Such replacements fall under the term ``Boltzmann--Gibbs
principles'' coined by Brox--Rost in \cite{BR} which have general
validity. For instance, the following result forms the backbone of the
argument for Proposition~\ref{OU_oldprop}, when starting from the
invariant measure $\nu_\rho$, with respect to the papers cited just
before the proposition statement.

\begin{proposition}
\label{usualbg}

Let $f$ be a local $L^2(\nu_\rho)$ function. For $t\geq0$ and $h\in\ell
^2(\Z)$, we have
\[
\lim_{n \to\infty} \E_{\nu_\rho} \biggl[ \biggl(\int
_0^t \frac{1}{\sqrt n} \sum
_{x \in\bb Z} \bigl(\tau_x f\bigl(\eta^n_{s}
\bigr) -\varphi_f(\rho) -\varphi '_f(
\rho) \bigl(\eta^n_{s}(x)-\rho \bigr) \bigr)h(x) \,ds
\biggr)^2 \biggr] =0.
\]
\end{proposition}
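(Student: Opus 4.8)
\medskip
\noindent\emph{Proof strategy.} The plan is as follows. Set $V = f - \varphi_f(\rho) - \varphi'_f(\rho)(\eta(0)-\rho)$, so that the integrand is $n^{-1/2}\sum_x h(x)\tau_x V(\eta^n_s)$; by (IM1) and \eqref{varphi_derivatives} the local function $V$ satisfies $\varphi_V(\rho) = \varphi'_V(\rho) = 0$, and, the estimates below being linear in $V$, after a truncation $f\mapsto(f\wedge M)\vee(-M)$ one may assume $V$ is bounded, hence in every $L^p(\nu_\rho)$. First I would pass to a static bound: by the standard variance estimate for additive functionals of a stationary Markov process --- valid with a constant uniform in $n$, since the antisymmetric part of $L_n$ satisfies a sector condition with constant of order $n^{-\gamma}$ relative to the symmetric part $S_n$ ---
\[
\E_{\nu_\rho}\left[\left(\int_0^t \tfrac{1}{\sqrt n}\sum_x h(x)\tau_x V(\eta^n_s)\,ds\right)^{2}\right] \ \le\ C\,t\,\Big\|\tfrac{1}{\sqrt n}\sum_x h(x)\tau_x V\Big\|_{-1,n}^{2},
\]
where $\|W\|_{-1,n}^{2} = \sup_\psi\big\{2\langle W,\psi\rangle_{\nu_\rho} - D_{\nu_\rho,n}(\psi)\big\}$, the supremum over local $L^2(\nu_\rho)$ functions $\psi$. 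It then suffices to show this $H_{-1}$ norm tends to $0$.

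To this end, fix a scale $\ell=\ell_n=\lfloor\sqrt n\rfloor$ and let $\pi_{\ell,x}$ be conditional expectation given $\sum_{y\in x+\Lambda_\ell}\eta(y)$ and the variables outside $x+\Lambda_\ell$, so that $\pi_{\ell,x}\tau_x V=\tau_x(\pi_{\ell,0}V)$ and, for the product or Markov measures arising in our models, $\pi_{\ell,0}V$ is supported on $\Lambda_{\ell+1}$. Split $\tau_x V=(\tau_x V-\pi_{\ell,x}\tau_x V)+\pi_{\ell,x}\tau_x V$. The ``collapsed'' piece is estimated crudely: by invariance of $\nu_\rho$ and Cauchy--Schwarz in time its contribution is at most $t^2\,E_{\nu_\rho}\big[(n^{-1/2}\sum_x h(x)\tau_x(\pi_{\ell,0}V))^2\big]$; since $\varphi_V(\rho)=\varphi'_V(\rho)=0$, the first estimate in (EE) together with the fourth-moment bound of (IM2) gives $\|\pi_{\ell,0}V\|_{L^2(\nu_\rho)}\le C/\ell$, and disjointness of the supporting blocks (exact for product $\nu_\rho$, up to an exponentially decaying tail in the mixing case) bounds the covariance of $\tau_x(\pi_{\ell,0}V)$ and $\tau_y(\pi_{\ell,0}V)$ by $C\ell^{-2}K(x-y)$ with $\sum_z K(z)\le C\ell$; Young's convolution inequality then controls this piece by $C t^2\|h\|_{\ell^2(\Z)}^2/(n\ell)\to 0$.

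The substantial piece is the ``fluctuation'' one, $n^{-1/2}\sum_x h(x)(\tau_x V-\pi_{\ell,x}\tau_x V)$. By the variance estimate it suffices to bound its $H_{-1}$ norm. For fixed local $\psi$, since $\tau_x V-\pi_{\ell,x}\tau_x V$ has mean zero under every canonical measure $\nu_{k,\ell,\xi}$ on $x+\Lambda_\ell$, conditioning and the Poincar\'e inequality \eqref{poincare inequality} give
\[
\big|\langle \tau_x V-\pi_{\ell,x}\tau_x V,\ \psi\rangle_{\nu_\rho}\big| \ \le\ E_{\nu_\rho}\Big[\sqrt{\mathrm{Var}_{\mathrm{can}}(\tau_x V)}\ \sqrt{W(\cdot,\ell,\cdot,n)\,\mathcal{D}_n(\psi,\nu_{\cdot,\ell,\cdot})}\Big],
\]
with variance and canonical Dirichlet form taken relative to the canonical measure on $x+\Lambda_\ell$. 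Inserting this into the variational formula, applying Young's inequality with a suitably tuned parameter so that, after summing over $x$ (each bond lying in at most $2\ell+1$ boxes), the Dirichlet-form contribution is absorbed into $D_{\nu_\rho,n}(\psi)$, and estimating the leftover by Cauchy--Schwarz together with the spectral-gap assumption (G) (so that $E_{\nu_\rho}[W(\cdot,\ell,\cdot,n)^2]\le C\ell^4$) and $\|V\|_{L^4(\nu_\rho)}<\infty$, I expect a bound of order $\ell^3\|h\|_{\ell^2(\Z)}^2\|V\|_{L^4(\nu_\rho)}^2/n^3$, which is $O(n^{-3/2})$ for $\ell=\lfloor\sqrt n\rfloor$. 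Combining the two pieces proves the statement.

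The hard part will be this last $H_{-1}$ estimate for the one-block difference $\tau_x V-\pi_{\ell,x}\tau_x V$: the only spectral information available is the \emph{averaged} bound (G), which carries an extra factor $\ell^2$ compared with a sharp uniform $O(\ell^{-2})$ gap, so the Young optimisation and the $L^4$ moment bookkeeping have to be arranged so that a single coarse-graining at scale $\sqrt n$ already closes the estimate. This suffices here precisely because only convergence to zero is claimed; by contrast, the refined Boltzmann--Gibbs principle of Theorem \ref{gbg_L2}, which must retain rather than discard the quadratic term, requires iterating essentially this step over a full dyadic range of scales.
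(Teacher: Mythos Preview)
The paper does not prove Proposition~\ref{usualbg}; it is quoted from the literature (Brox--Rost~\cite{BR} and the references preceding Proposition~\ref{OU_oldprop}). Your argument is correct and is, in substance, a single-scale instance of the machinery the paper builds in Section~\ref{BG} for the sharper Theorem~\ref{gbg_L2}: your ``fluctuation piece'' is exactly Lemma~\ref{globalone_block} with $h$ rescaled by $n^{-1/2}$, yielding the $O(\ell^3 n^{-3})$ bound you state, and your ``collapsed piece'' is a direct $L^2$ bound on the conditional expectation via the first display of~(EE), bypassing the dyadic iteration of Lemmas~\ref{globalrenormalization}--\ref{globaltwo-blocks}. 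One minor remark: the sector-condition comment is unnecessary, since Proposition~\ref{KV} already delivers the $H_{-1}$ bound for the full asymmetric generator $L_n$ with constant $20$ independent of $n$.

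That said, for the proposition \emph{as literally stated}, with $h$ a fixed element of $\ell^2(\Z)$, all of this is more than is needed. By stationarity and Cauchy--Schwarz in $s$ the expectation is at most $t^2\,E_{\nu_\rho}\big[(n^{-1/2}\sum_x h(x)\tau_x V)^2\big]$; since $V$ is local and mean-zero, and the correlations $z\mapsto E_{\nu_\rho}[V\tau_z V]$ are summable (trivially for product $\nu_\rho$, and by exponential mixing for the Markov measures of Subsection~\ref{speed_change_model}), this variance is $O(n^{-1}\|h\|_{\ell^2}^2)\to 0$. No $H_{-1}$ norm, spectral gap~(G), or~(EE) is required. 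Your route becomes essential only when $h=h_n$ depends on $n$ with $\|h_n\|_{\ell^2}^2\sim n$, which is precisely the regime of Theorem~\ref{gbg_L2}; you correctly identify that the dyadic refinement is what separates the two statements.
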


We now state a main result of this paper which provides a sharper
estimate, perhaps of independent interest, when starting from $\nu_\rho
$. To simplify expressions, we will use the notation
\[
\bigl(\eta^n_s\bigr)^{(\ell)}(x):=
\frac{1}{2\ell+1}\sum_{y\in\Lambda
_\ell}\eta^n_s(x+y).
\]

\begin{theorem}[($L^2$ generalized Boltzmann--Gibbs principle)]
\label{gbg_L2}
Let $f$ be a local $L^5(\nu_\rho)$ function supported on sites $\Lambda
_{\ell_0}$ such that $\varphi_f(\rho) =\varphi_f'(\rho)=0$.
There exists a constant $C=C(\rho, \ell_0)$
such that, for $t\geq0$, $\ell\geq\ell^3_0$ and $h\in\ell^1(\Z)\cap
\ell^2(\mathbb{Z})$,
\begin{eqnarray*}
&& \bb E_{\nu_\rho} \biggl[ \biggl( \int_0^t
\sum_{x\in{\mathbb{Z}}} \biggl(\tau _x f\bigl(
\eta^n_s\bigr) - \frac{\varphi_f''(\rho)}{2} \biggl\{ \bigl(
\bigl(\eta^n_s\bigr)^{(\ell
)}(x)-\rho
\bigr)^2-\frac{\sigma^2_\ell(\rho)}{2\ell+1} \biggr\} \biggr) h(x)\,ds
\biggr)^2 \biggr]
\\
& &\qquad\leq C\|f\|^2_{L^5(\nu_\rho)} \biggl(\frac{t \ell}{n} \biggl(
\frac
{1}{n}\sum_{x\in\Z}h^2(x)
\biggr) + \frac{t^2n^2}{\ell^{2+\alpha_0}} \biggl(\frac{1}{n}\sum
_{x\in{\mathbb{Z}}}\bigl|h(x)\bigr| \biggr)^2 \biggr).
\end{eqnarray*}

On the other hand, when only $\varphi_f(\rho)=0$ is known,
\begin{eqnarray*}
& &\bb E_{\nu_\rho} \biggl[ \biggl( \int_0^t
\sum_{x\in{\mathbb{Z}}} \bigl(\tau _x f\bigl(
\eta^n_s\bigr) - \varphi_f'(
\rho) \bigl\{ \bigl(\eta^n_s \bigr)^{(\ell)}(x) -
\rho \bigr\} \bigr) h(x)\,ds \biggr)^2 \biggr]
\\
&&\qquad \leq C\|f\|^2_{L^5(\nu_\rho)} \biggl(\frac{t \ell^2}{n} \biggl(
\frac
{1}{n}\sum_{x\in\Z}h^2(x)
\biggr) + \frac{t^2n^2}{\ell^{1+\alpha_0}} \biggl(\frac{1}{n}\sum
_{x\in{\mathbb{Z}}}\bigl|h(x)\bigr| \biggr)^2 \biggr).
\end{eqnarray*}
Here, $\alpha_0>0$ is the power in assumption \textup{(EE)}.
\end{theorem}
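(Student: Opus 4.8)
The plan is to run a multiscale $H_{-1}$ (Kipnis--Varadhan) renormalization: telescope the local function $f$ through the canonical conditional expectations at a dyadic family of scales up to $\ell$, control each increment by the localized spectral gap (G) via a Kipnis--Varadhan inequality, and close the residual gap with the equivalence of ensembles (EE). Concretely, for $L\geq \ell_0$ set $f_L(\eta):=E_{\nu_\rho}[f\mid \eta^{(L)},\eta^c_L]$ and $Q_L(\eta):=\tfrac{\varphi''_f(\rho)}{2}\big[(\eta^{(L)}-\rho)^2-\tfrac{\sigma^2_L(\rho)}{2L+1}\big]$, pick scales $\ell_0=L_0<L_1<\cdots<L_J\leq \ell<2L_J$ with $L_{j+1}=2L_j$, and decompose
\[
\tau_x f-\tau_x Q_\ell \ = \ \tau_x\big(f-f_{\ell_0}\big)+\sum_{j=0}^{J-1}\tau_x\big(f_{L_j}-f_{L_{j+1}}\big)+\tau_x\big(f_\ell-Q_\ell\big).
\]
The point is that $\sigma(\eta^{(2L)},\eta^c_{2L})\subset\sigma(\eta^{(L)},\eta^c_{L})$, so by the tower property each intermediate increment $V:=f_L-f_{2L}$ (and likewise $f-f_{\ell_0}$, since $f$ is supported on $\Lambda_{\ell_0}$) has vanishing conditional expectation on every canonical hyperplane at the appropriate scale $L'$, i.e.\ $E_{\nu_{k,L',\xi}}[V]=0$; moreover $V$ and each $Q_L-Q_{2L}$ are mean-zero under $\nu_\rho$, and $\mathbb E_{\nu_\rho}[(\eta^{(L)}-\rho)^2-\tfrac{\sigma^2_L(\rho)}{2L+1}]=0$.

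For an increment I would apply the stationary non-reversible Kipnis--Varadhan bound $\E_{\nu_\rho}\big[\big(\int_0^t W(\eta^n_s)\,ds\big)^2\big]\leq Ct\,\|W\|^2_{-1,n}$ for $W=\sum_x h(x)\tau_x V$, where $\|W\|^2_{-1,n}:=\sup_g\{2\E_{\nu_\rho}[Wg]-\E_{\nu_\rho}[g(-S_ng)]\}$ over local $g$ is the $H_{-1}$ norm for the symmetric part $S_n$; this inequality uses only invariance of $\nu_\rho$ (cf.\ \cite{SX}). To estimate the norm, I would condition on $(\eta^{(L')},\eta^c_{L'})$ suitably shifted, use $E_{\nu_{k,L',\xi}}[V]=0$ to subtract the canonical mean of $\tau_{-x}g$, apply Cauchy--Schwarz on each hyperplane together with the Poincaré inequality \eqref{poincare inequality}, obtaining a product of $\big(\E_{\nu_\rho}[\|V\|_{L^2(\nu_{k,L',\xi})}^2\,W(k,L',\xi,n)]\big)^{1/2}$ and $\big(\E_{\nu_\rho}[\mathcal{D}_n(\tau_{-x}g,\nu_{k,L',\xi})]\big)^{1/2}$. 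Then assumption (G) and Cauchy--Schwarz give $\E_{\nu_\rho}[\|V\|_{L^2(\nu_{k,L',\xi})}^2\,W]\leq\|V\|^2_{L^4(\nu_\rho)}\,(\E_{\nu_\rho}[W^2])^{1/2}\leq C(L')^2\|V\|^2_{L^4(\nu_\rho)}$ --- this is exactly what forces the $L^4(\nu_\rho)$ norm, hence the $\|f\|_{L^5(\nu_\rho)}$ hypothesis through (EE) --- while translation invariance of $\nu_\rho$ reassembles $\sum_x\E_{\nu_\rho}[\mathcal{D}_n(\tau_{-x}g,\nu_{k,L',\xi})]\leq CL'\,\E_{\nu_\rho}[g(-S_ng)]/n^2$ with only an $O(L')$ overlap loss. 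Summing over $x$ by Cauchy--Schwarz against $\|h\|_{\ell^2}$ and optimizing in $\E_{\nu_\rho}[g(-S_ng)]$ yields $\|W\|^2_{-1,n}\leq Cn^{-2}(L')^3\|V\|^2_{L^4(\nu_\rho)}\|h\|^2_{\ell^2(\Z)}$.

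The sizes of the increments come from (EE): $\|f_L-Q_L\|_{L^4(\nu_\rho)}\leq C\|f\|_{L^5(\nu_\rho)}L^{-1-\alpha_0/2}$, and a direct computation using the moment bounds on block averages available from (IM1)--(IM2) together with $|\varphi''_f(\rho)|\leq C\|f\|_{L^2(\nu_\rho)}$ from (D2) gives $\|Q_L-Q_{2L}\|_{L^4(\nu_\rho)}\leq CL^{-1}$, whence $\|f_L-f_{2L}\|_{L^4(\nu_\rho)}\leq C\|f\|_{L^5(\nu_\rho)}L^{-1}$, while $\|f-f_{\ell_0}\|_{L^4(\nu_\rho)}\leq 2\|f\|_{L^5(\nu_\rho)}$ by contractivity. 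Feeding these into the per-scale $H_{-1}$ bound, using the triangle inequality in $L^2(\P_{\nu_\rho})$ to sum the square roots of the contributions over the dyadic scales (a geometric series controlled by its top scale, where $\ell>\ell_0^3$ makes the undecaying first increment, with its factor $\ell_0^3$, also $\lesssim\ell$), and squaring, produces the first term $Ct\ell\,n^{-2}\|f\|^2_{L^5(\nu_\rho)}\|h\|^2_{\ell^2(\Z)}$. The final term $\tau_x(f_\ell-Q_\ell)$ is handled crudely: by Cauchy--Schwarz in time and stationarity its contribution is at most $t^2\big(\sum_x|h(x)|\big)^2\|f_\ell-Q_\ell\|^2_{L^2(\nu_\rho)}\leq Ct^2\big(\sum_x|h(x)|\big)^2\|f\|^2_{L^5(\nu_\rho)}\ell^{-2-\alpha_0}$, the second term. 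The second display of the theorem follows in exactly the same way, replacing $Q_L$ by the linear profile $\varphi'_f(\rho)(\eta^{(L)}-\rho)$ and using the second bound in (EE); the increments are then only $O(L^{-1/2})$ in $L^4(\nu_\rho)$, which is precisely what turns $t\ell$ into $t\ell^2$ and $\ell^{2+\alpha_0}$ into $\ell^{1+\alpha_0}$.

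The hard part will be the per-scale $H_{-1}$ estimate: establishing the stationary non-reversible Kipnis--Varadhan inequality in this generality and, above all, extracting the sharp power $(L')^3$ --- one must route the localized Poincaré inequality and (G) through a Cauchy--Schwarz that keeps the \emph{global} Dirichlet form linear, so that translation invariance reassembles the localized pieces losing only an $O(L')$ factor, and the resulting passage to the $L^4(\nu_\rho)$ norm (forced by using the second moment of the inverse spectral gap rather than a uniform bound) is exactly what dictates the $L^5(\nu_\rho)$ integrability hypothesis on $f$ and the higher-moment input on the block averages $\eta^{(\ell)}$.
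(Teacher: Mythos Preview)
Your proposal is correct and follows essentially the same route as the paper: the paper's proof assembles exactly the same three pieces---a one-block estimate for $f-f_{\ell_0}$ (your first increment, Lemma~\ref{globalone_block}), a dyadic two-blocks estimate telescoping $f_{\ell_0}$ to $f_\ell$ via the same $H_{-1}$/spectral-gap bound with the $L^4$ size of each increment supplied by (EE) (your middle sum, Lemmas~\ref{globalrenormalization} and~\ref{globaltwo-blocks}), and a crude Cauchy--Schwarz-in-time plus (EE) bound for $f_\ell-Q_\ell$ (your last term, Lemma~\ref{EE_1block}). Your identification of where the $L^4$ norm (and hence $L^5$ hypothesis via (EE)) is forced---namely the Cauchy--Schwarz against $W(k,\ell,\xi,n)^2$ in assumption (G)---and of the $(L')^3$ power in the per-scale $H_{-1}$ bound matches Proposition~\ref{crucial estimate} exactly; the only cosmetic slip is that your telescope as written ends at $f_{L_J}$ rather than $f_\ell$, but one more renormalization step (as in the paper's Lemma~\ref{globaltwo-blocks}) closes that gap at the same cost.
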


The proof of Theorem~\ref{gbg_L2} is given in Section~\ref{BG}. We
note, if the uniform spectral gap holds, $\sup_{k,\xi,n}\ell
^{-2}W(k,\ell,\xi,n)<\infty$, then the argument shows one can replace
in the right-hand sides above $\|f\|_{L^5(\nu_\rho)}$ with $\|f\|
_{L^3(\nu_\rho)}$.

\subsection{Tightness}
\label{tightness_section}
We prove tightness of the fluctuation fields, first starting from the
invariant measure $\nu_\rho$, using the $L^2$ generalized
Boltzmann--Gibbs principle. Then by the relative entropy bound (\ref
{relative_entropy}), we deduce tightness when beginning from initial
measures $\{\mu^n\}$.

\begin{proposition}
\label{stationary_tightness}
The sequences $\{\Y^{n,\gamma}_t\dvtx t\in[0,T]\}_{n\geq1}$, $\{\M
^{n,\gamma}_t\dvtx t\in\break [0,T]\}_{n\geq1}$, $\{\I^{n,\gamma}_t\dvtx t\in[0,T]\}
_{n\geq1}$, $\{\B^{n,\gamma}_t\dvtx t\in[0,T]\}_{n\geq1}$, $\{\K
^{n,\gamma}_t\dvtx t\in[0,T]\}$ and $\{\langle\M^{n,\gamma}_t\rangle\dvtx
t\in[0,T]\}_{n\geq1}$, when starting from the invariant measure $\nu
_\rho$, are tight in the uniform topology on $D([0,T], \S'(\R))$.
\end{proposition}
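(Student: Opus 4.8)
The plan is to establish tightness for each of the six families separately, using the standard Aldous--Rebolledo / moment criterion in the Skorohod space $D([0,T],\S'(\R))$ (which, combined with the uniform moment bounds below, upgrades to the uniform topology). Since $\S'(\R)$ is a nuclear Fréchet space, it suffices by Mitoma's criterion to show that for each fixed $H\in \S(\R)$ the real-valued processes $\{\Y^{n,\gamma}_t(H)\}$, $\{\M^{n,\gamma}_t(H)\}$, $\{\I^{n,\gamma}_t(H)\}$, $\{\B^{n,\gamma}_t(H)\}$, $\{\K^{n,\gamma}_t(H)\}$ and $\{\langle\M^{n,\gamma}_t(H)\rangle\}$ are tight in $D([0,T],\R)$. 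Because the decomposition \eqref{mart_decomposition} expresses $\Y^{n,\gamma}_t(H)$ as a sum of $\Y^{n,\gamma}_0(H)$, the martingale $\M^{n,\gamma}_t(H)$, and the three integral terms $\I,\B,\K$, tightness of $\Y^{n,\gamma}_t(H)$ follows once the other pieces are controlled; the initial term $\Y^{n,\gamma}_0(H)$ is a single random variable with a uniformly bounded second moment by (IM2), so it is trivially tight. Thus the real work is the four families $\M,\I,\B,\K$ and the bracket $\langle\M^{n,\gamma}\rangle$.

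For the martingale term $\M^{n,\gamma}_t(H)$, I would invoke the fourth-moment estimate \eqref{fourth_moment_mart}, which under $\nu_\rho$ gives $\E_{\nu_\rho}[(\M^{n,\gamma}_t(H)-\M^{n,\gamma}_s(H))^4]\leq C(a,H)\|b\|^4_{L^4(\nu_\rho)}(|t-s|^2+n^{-3/2}|t-s|)$; the Kolmogorov--Chentsov criterion then yields tightness (indeed relative compactness in $C([0,T],\R)$). For the bracket $\langle\M^{n,\gamma}_t(H)\rangle$, the explicit formula for $\langle\M^{n,\gamma}_t(H)\rangle$ shows it is an additive functional whose increments are bounded, using the (R1) bounds on $b$ and $|c_0-c_1|$ and invariance of $\nu_\rho$, by $C(a,H)\|b\|_{L^1(\nu_\rho)}|t-s|$ in $L^1(\P_{\nu_\rho})$ — in fact the integrand is nonnegative plus a lower-order term, so one gets a uniform Lipschitz-in-time modulus of continuity in probability, which is more than enough for tightness. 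The corrector term $\I^{n,\gamma}_t(H)$ is handled directly by Proposition \ref{usualbg} (or, more crudely, by bounding $\E_{\nu_\rho}[(\I^{n,\gamma}_t(H)-\I^{n,\gamma}_s(H))^2]$ using stationarity, the spatial mixing in (IM1), and $\sum_x (\triangle^n_x H_{\gamma,s})^2 = O(n)$, giving an $O(|t-s|^2)$ or $O(|t-s|)$ second-moment increment bound). The term $\K^{n,\gamma}_t(H)$, which collects the $O(n^{-1})$ discrepancy between the $\R$-shift and the $n^{-1}\Z$-shift, is even smaller: the bounds on $\kappa^{n,1}_x,\kappa^{n,2}_x$ recorded after \eqref{mart_decomposition} show that after the $n^{-1}$ gain these sums are of the same order as $\I$ and $\B$ divided by $n$, so the same estimates apply with room to spare.

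The genuine obstacle is the asymmetric drift term $\B^{n,\gamma}_t(H)$, for which I would split according to the regime of $\gamma$. When $1/2<\gamma\leq 1$ one has a power of $n$ to spare: since $b_x(\eta)-\varphi_b(\rho)-\varphi_b'(\rho)(\eta(x)-\rho)$ has vanishing $\varphi$ and $\varphi'$ at $\rho$, one applies the $L^2$ generalized Boltzmann-Gibbs principle (Theorem \ref{gbg_L2}) with $h(x) = \nabla^n_x H_{\gamma,s}$ (so $\sum_x h^2(x) = O(n)$ and $\sum_x |h(x)| = O(n)$) and the prefactor $a/(2n^{\gamma-1/2})$; optimizing the scale $\ell$ in $\ell\in(\ell_0^3,\cdot)$ shows $\E_{\nu_\rho}[(\B^{n,\gamma}_t(H)-\B^{n,\gamma}_s(H))^2]\to 0$, hence is uniformly small, which together with the Lipschitz-in-time structure of the time integral gives tightness. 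The critical case $\gamma=1/2$ is where no spatial averaging prefactor is available; here one still uses Theorem \ref{gbg_L2}, now choosing $\ell=\ell(n)$ growing like a power of $n$ strictly between $1$ and $n$ so that both error terms $t\ell/n\cdot\|h\|^2/n$ and $t^2 n^2/\ell^{2+\alpha_0}\cdot(\|h\|_1/n)^2$ are $O(1)$ uniformly (this is possible precisely because $\alpha_0>0$), and then combines with the bound $\sigma_\ell^2(\rho)/(2\ell+1)=O(1/\ell)\to 0$ on the recentering constant to conclude that $\B^{n,\gamma}_t(H)$ is, modulo a vanishing error, an integral against the square of a local average of the field; the uniform moment control of such quadratic field functionals (again via stationarity and the fourth-moment bound in (IM2)) then furnishes the Aldous-type equicontinuity. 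I expect the bookkeeping of the $\ell=\ell(n)$ optimization in the $\gamma=1/2$ case — making sure all error terms stay uniformly bounded while simultaneously $\ell\to\infty$ and $\ell/n\to 0$ — to be the main technical point, everything else reducing to moment estimates already in hand.
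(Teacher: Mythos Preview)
Your overall plan (Mitoma reduction, the decomposition \eqref{mart_decomposition}, the fourth-moment martingale bound \eqref{fourth_moment_mart}, and the appeal to Theorem~\ref{gbg_L2} for $\B^{n,\gamma}$) matches the paper's approach. The treatment of $\Y^{n,\gamma}_0$, $\M^{n,\gamma}$, $\langle\M^{n,\gamma}\rangle$, $\I^{n,\gamma}$ and $\K^{n,\gamma}$ is essentially the same as in the paper.

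The gap is in your handling of $\B^{n,\gamma}_t(H)$ at $\gamma=1/2$. You propose to fix $\ell=\ell(n)=n^\beta$ with $2/(2+\alpha_0)<\beta<1$ so that both error terms from Theorem~\ref{gbg_L2} vanish, and then claim ``uniform moment control'' of the remaining quadratic functional. But that quadratic piece,
\[
\int_0^t \sum_{x\in\Z}(\nabla^n_xH_{\gamma,s})\,\frac{\varphi_b''(\rho)}{2}\Big\{\big((\eta^n_s)^{(\ell)}(x)-\rho\big)^2-\tfrac{\sigma_\ell^2(\rho)}{2\ell+1}\Big\}\,ds,
\]
has second moment of order $t^2 n^2/\ell^2 = t^2 n^{2-2\beta}$, which \emph{diverges} whenever $\beta<1$. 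Equivalently, the square of the local average is $\Y^{n,\gamma}_s(\iota_{\ell/n})^2$ with $\ell/n\to 0$, and $\E_{\nu_\rho}[\Y^{n,\gamma}_s(\iota_\varepsilon)^2]\sim\varepsilon^{-1}$ blows up. So there is no uniform moment control to feed into an Aldous argument; and even if the bound were $O(|t-s|)$, a merely linear second-moment increment is not enough for Kolmogorov--Centsov, while Aldous would require control at stopping times, which the $H_{-1,n}$ estimates behind Theorem~\ref{gbg_L2} do not supply.

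The paper's resolution is the one idea you are missing: let the block size depend on the time increment, $\ell = |t-s|^{1/3}n$. Then the Boltzmann--Gibbs error contributes $|t-s|\,\ell/n=|t-s|^{4/3}$, and the quadratic remainder contributes $|t-s|^2\,n^2/\ell^2=|t-s|^{4/3}$ as well, giving
\[
\E_{\nu_\rho}\big[(\B^{n,\gamma}_t(H)-\B^{n,\gamma}_s(H))^2\big]\ \le\ C\,|t-s|^{4/3}
\]
uniformly in $n$ (with a separate elementary bound when $|t-s|^{1/3}n\le R^3$). This superlinear exponent is exactly what Kolmogorov--Centsov needs, and no stopping-time argument is required.
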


\begin{pf}
By Mitoma's criterion \cite{Mitoma},
to prove tightness of the sequences with respect to the uniform
topology on $D([0,T], \S'(\R))$, it is enough to show tightness of $\{\Y
^{n,\gamma}_t(H); t\in[0,T]\}_{n\geq1}$, $\{\M^{n,\gamma}_t(H)\dvtx t\in
[0,T]\}_{n\geq1}$, $\{\I^{n,\gamma}_t(H)\dvtx t\in[0,T]\}_{n\geq1}$, $\{
\B^{n,\gamma}_t(H)\dvtx t\in[0,T]\}_{n\geq1}$, $\{\K^{n,\gamma}_t(H)\dvtx t\in[0,T]\}$ and $\{\langle\M^{n,\gamma}_t(H)\rangle\dvtx\break  t\in[0,T]\}
_{n\geq1}$, with respect to the uniform topology for all $H\in\S(\R
)$. Note that all initial values vanish, except $\Y^{n,\gamma}_0(H)$.

Tightness of $\Y^{n,\gamma}_t(H)$, in view of the decomposition $\Y
^{n,\gamma}_t(H) = \Y^{n,\gamma}_0(H) + \I^{n,\gamma}_t(H) + \B
^{n,\gamma}_t(H) + \K^{n,\gamma}_t(H)+\M^{n,\gamma}_t(H)$, will follow
from tightness of each term. The tightness of $\Y^{n,\gamma}_0(H)$,
given that we begin under $\nu_\rho$, follows from assumption (IM).

For the martingale term, we use Doob's inequality and stationarity to obtain
\begin{eqnarray*}
&&\P_{\nu_\rho} \Bigl( \mathop{\sup_{|t-s|\leq\delta}}_{0\leq s,t\leq
T} \bigl|\M^{n,\gamma}_t(H)
- \M^{n,\gamma}_s(H)\bigr|>\varepsilon \Bigr)
\\[-1pt]
&&\qquad \leq \varepsilon^{-4}\E_{\nu_\rho} \Bigl[ \mathop{\sup
_{|t-s|\leq\delta}}_{0\leq s,t\leq T} \bigl|\M^{n,\gamma}_t(H) - \M
^{n,\gamma}_s(H)\bigr|^4 \Bigr]
\\[-1pt]
&&\qquad \leq C \varepsilon^{-4}\delta^{-1} \E_{\nu_\rho}
\bigl[ \bigl(\M_\delta^{n,\gamma}(H) \bigr)^4 \bigr].
\end{eqnarray*}
Now, by the fourth moment estimate (\ref{fourth_moment_mart}), we have
\[
\delta^{-1} \E_{\nu_\rho} \bigl[ \bigl(\M_\delta^n(H)
\bigr)^4 \bigr] \leq C\|b\|_{L^4(\nu_\rho)}\bigl(\delta+
n^{-3/2}\bigr),
\]
which vanishes as $n\uparrow\infty$ and then $\delta\downarrow0$. This
is enough to conclude that $\{\M^{n,\gamma}_t(H)\dvtx t\in[0,T]\}_{n\geq
1}$ is tight in the uniform topology.

We now prove tightness for $\B^{n,\gamma}_t(H)$ through the
Kolmogorov--Centsov criterion. The argument for $\I^{n,\gamma}_t(H)$ is
similar. Also, the proofs for $\langle\M^{n,\gamma}_t(H)\rangle$ and
$\K^{n,\gamma}_t(H)$, given their forms, are simpler and can be done
using invariance of $\nu_\rho$ by squaring all terms. We focus on the
case $\gamma=1/2$, given that the estimates are analogous and simpler
when $1/2<\gamma\leq1$. Let
\[
V_b(\eta) = b(\eta) - \varphi_b(\rho) -
\varphi_b'(\rho) \bigl(\eta(0) -\rho\bigr).
\]
By assumption (R1), $V_b$ has range $R$. Also, by its form, $\varphi
_{V_b}(\rho) = \varphi'_{V_b}(\rho) = 0$ and also $\varphi''_{V_b}(\rho
)=\varphi''_b(\rho)$.

Then\vspace*{-1pt}
\[
\B^{n,\gamma}_t(H) = \frac{a}{2}\int_0^t
\sum_{x\in\Z} \bigl(\nabla ^n_x
H_{\gamma,s} \bigr) \tau_x V_b(
\eta_s)\,ds.\vspace*{-1pt}
\]
By invoking Theorem~\ref{gbg_L2} and translation-invariance of $\nu_\rho
$ which allows to replace $\nabla^n_x H_{\gamma,s}$ with $\nabla^n_x H$
(which does not depend on time $s$), for $\ell\geq\ell^3_0=R^3$, with
respect to a constant $C=C(a,\rho,R)$, we have
%
\begin{eqnarray}
\label{tightness_B}
&&\E_{\nu_\rho} \biggl[ \biggl(\B^{n,\gamma}_t(H)
\nonumber\\
&&\hspace*{16pt}\quad{} - \frac{a}{4}\int_0^t \sum
_{x\in\Z} \bigl(\nabla^n_x
H_{\gamma
,s}\bigr)\varphi''_{b}(
\rho) \biggl\{ \bigl( \bigl(\eta_s^n
\bigr)^{(\ell)}(x) - \rho \bigr)^2 - \frac{\sigma^2_\ell(\rho)}{2\ell+1} \biggr\}
\,ds \biggr)^2 \biggr]
\\
&&\qquad \leq C \|b\|^2_{L^4(\nu_\rho)} \biggl\{ \frac{t\ell}{n} +
\frac
{t^2n^2}{\ell^{2+\alpha_0}} \biggr\} \biggl[ \biggl(\frac{1}{n}\sum
_{x\in\Z} \bigl(\nabla^n_xH
\bigr)^2 \biggr) + \biggl(\frac{1}{n}\sum
_{x\in\Z}\bigl|\nabla^n_xH\bigr|
\biggr)^2 \biggr].
\nonumber
\end{eqnarray}

On the other hand, given $\sup_{\ell\geq R}E_{\nu_\rho}[(\sqrt{\ell
}(\eta^\ell- \rho))^4]<\infty$ by assumption (IM) and
$|\varphi_{b}''(\rho)|\leq C\|b\|_{L^2(\nu_\rho)}$ by assumption (D),
and the Schwarz inequality $(\sum_x h(x)r(x))^2 \leq(\sum_x |h(x)|)\sum_x |h(x)|r^2(x)$,
we have for $\ell>R^3$ that
\begin{eqnarray*}
&&\E_{\nu_\rho} \biggl[ \biggl( \int_0^t
\sum_{x\in\Z} \bigl(\nabla^n_xH_{\gamma
,s}
\bigr)\frac{\varphi''_{b}(\rho)}{2} \biggl\{ \bigl( \bigl(\eta^n_s
\bigr)^{(\ell
)}(x) - \rho \bigr)^2 -\frac{\sigma^2_\ell(\rho)}{2\ell+1} \biggr
\}\,ds \biggr)^2 \biggr]
\\
&&\qquad \leq C(\rho) \|b\|^2_{L^2(\nu_\rho
)}\frac{t^2 n^2}{\ell^2} \biggl(
\frac{1}{n}\sum_{x\in\Z} \bigl|\nabla
^n_xH\bigr| \biggr)^2.
\end{eqnarray*}
Hence, for $\ell> R^3$, we have
$\E_{\nu_\rho}  [(\B^{n,\gamma}_t(H))^2 ]  \leq C(a,\rho,R,H)
\|b\|^2_{L^4(\nu_\rho)} [t\ell/n + t^2n^2/\ell^2]$, noting the
domination $n^2/\ell^{2+\alpha_0}\leq n^2/\ell^2$. Then, if $\ell$ is
taken as $\ell= t^{1/3}n>R^3$, we conclude $\E_{\nu_\rho} [(\B
^{n,\gamma}_t(H))^2 ] \leq C(a,\rho,R,H) \|b\|^2_{L^4(\nu_\rho)}t^{4/3}$.

However, when $t^{1/3}n \leq R^3$, we have by the same Schwarz bound
that
\begin{eqnarray*}
\E_{\nu_\rho} \bigl[\bigl(\B^{n,\gamma}_t(H)
\bigr)^2 \bigr] & \leq& C(\rho, a)\|b\| ^2_{L^2(\nu_\rho)}
t^2 n^2 \biggl(\frac{1}{n}\sum
_x \bigl|\nabla_x^n H\bigr|
\biggr)^2
\\
&\leq &C(\rho, a, H,R)\|b\|^2_{L^2(\nu_\rho)} t^{4/3}.
\end{eqnarray*}
This shows tightness of $\B_t^{n,\gamma}(H)$.

Combining these estimates, we conclude the proof of the proposition.
\end{pf}

We now update to when the process begins from the measures $\{\mu^n\}$.

\begin{proposition}
\label{nonstat_tightness}
The fluctuation field sequences $\{\Y^{n,\gamma}_t\dvtx t\in[0,T]\}_{n\geq
1}$, $\{\M^{n,\gamma}_t\dvtx t\in[0,T]\}_{n\geq1}$, $\{\I^{n,\gamma}_t\dvtx t\in[0,T]\}_{n\geq1}$,
$\{\B^{n,\gamma}_t\dvtx t\in[0,T]\}_{n\geq1}$, $\{\K^{n,\gamma}_t\dvtx t\in
[0,T]\}_{n\geq1}$ and $\{\langle\M^{n,\gamma}_t\rangle\dvtx  t\in[0,T]\}
_{n\geq1}$ are tight in the uniform topology on $D([0,T], \S'(\R))$
when starting from $\{\mu^n\}$ satisfying assumption \textup{(BE)}.
\end{proposition}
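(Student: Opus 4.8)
The plan is to deduce Proposition \ref{nonstat_tightness} from the stationary version, Proposition \ref{stationary_tightness}, by transporting each tightness estimate from $\P_{\nu_\rho}$ to $\P_{\mu^n}$ with the entropy inequality \eqref{relative_entropy}, exploiting the uniform bound $\Lambda := \sup_n H(\mu^n;\nu_\rho) < \infty$ granted by (BE). The first step is to apply Mitoma's criterion \cite{Mitoma}, which reduces tightness in the uniform topology on $D([0,T],\S'(\R))$ of each of the six fields to tightness in $D([0,T],\R)$, in the uniform topology, of the real-valued processes $X^{n,\gamma}_\cdot(H)$, for every fixed $H\in\S(\R)$ and every $X\in\{\Y,\M,\I,\B,\K,\langle\M\rangle\}$. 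As in the proof of Proposition \ref{stationary_tightness}, this is implied by the conjunction of: (a) compact containment, $\lim_{K\uparrow\infty}\limsup_{n\uparrow\infty}\P_{\mu^n}(\sup_{0\le t\le T}|X^{n,\gamma}_t(H)|\ge K)=0$; (b) a modulus-of-continuity estimate, $\lim_{\delta\downarrow 0}\limsup_{n\uparrow\infty}\P_{\mu^n}(\sup_{|t-s|\le\delta}|X^{n,\gamma}_t(H)-X^{n,\gamma}_s(H)|\ge\varepsilon)=0$ for each $\varepsilon>0$; together with tightness of the initial value $X^{n,\gamma}_0(H)$ (the limit points will automatically have continuous paths, so the $C([0,T])$-modulus in (b) is the correct one).

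The second step is to read off from the proof of Proposition \ref{stationary_tightness} what is already known under $\P_{\nu_\rho}$. That argument --- Doob's inequality together with the fourth-moment bound \eqref{fourth_moment_mart} for $\M$; Theorem \ref{gbg_L2} together with the Kolmogorov-Centsov criterion for $\I$ and $\B$; and the simpler squared estimates using invariance of $\nu_\rho$ for $\K$ and $\langle\M\rangle$ --- furnishes explicit upper bounds on $\P_{\nu_\rho}$ of exactly the events occurring in (a) and (b) whose $\limsup$ over $n$ tends to $0$ as $K\uparrow\infty$, respectively as $\delta\downarrow 0$. Denote these events by $A^n_K$ and $B^n_{\delta,\varepsilon}$.

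The third step is the entropy transfer. Since the right-hand side of \eqref{relative_entropy} depends on $n$ only through $H(\mu^n;\nu_\rho)\le\Lambda$, for every measurable event $A$ one has a bound $\P_{\mu^n}(A)\le\Phi_\Lambda(\P_{\nu_\rho}(A))$ with $\Phi_\Lambda$ a fixed nondecreasing function satisfying $\Phi_\Lambda(0^+)=0$. By monotonicity and continuity of $\Phi_\Lambda$ we obtain $\limsup_{n}\P_{\mu^n}(A^n_K)\le\Phi_\Lambda(\limsup_{n}\P_{\nu_\rho}(A^n_K))$, and the analogous bound for $B^n_{\delta,\varepsilon}$; letting $K\uparrow\infty$, respectively $\delta\downarrow 0$, the right-hand sides vanish, establishing (a) and (b) under $\{\mu^n\}$. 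For the initial value, $X^{n,\gamma}_0(H)\equiv 0$ unless $X=\Y$, in which case $\Y^{n,\gamma}_0(H)=\Y^n_0(H)$ (the characteristic shift is trivial at $s=0$), which converges in law under $\{\mu^n\}$ by (CLT) and is therefore tight; alternatively the Chebyshev bound $\P_{\nu_\rho}(|\Y^n_0(H)|\ge K)\le C(\rho)\|H\|^2_{L^2(\R)}K^{-2}$ from (IM) transfers by the same device. Assembling the three steps proves the proposition.

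I do not expect a genuine obstacle: all of the analytic content lives in Proposition \ref{stationary_tightness} and Theorem \ref{gbg_L2}, and this statement is a soft corollary. The one point that must be handled with a little care is that the stationary estimates control the bad events only in the $\limsup_n$ sense, not uniformly in $n$, so the transfer has to be carried out by passing the $\limsup$ through the monotone continuous map $\Phi_\Lambda$ rather than comparing probabilities term by term; one should also record, before invoking (CLT) or (IM), that the moving-frame field satisfies $\Y^{n,\gamma}_0=\Y^n_0$, so the initial data is unaffected by the characteristic reference frame.
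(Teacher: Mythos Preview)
Your proposal is correct and follows essentially the same route as the paper: reduce via Mitoma's criterion, invoke the stationary tightness of Proposition \ref{stationary_tightness}, and transfer the modulus-of-continuity bounds from $\P_{\nu_\rho}$ to $\P_{\mu^n}$ via the entropy inequality \eqref{relative_entropy}, handling the initial value $\Y^{n,\gamma}_0$ separately by (CLT). The paper's argument is terser---it records only the modulus event $O^n_{\delta,\varepsilon}$ and leaves compact containment implicit (it follows from tightness of the initial value together with the modulus bound)---but your more explicit treatment of $\Phi_\Lambda$ and the $\limsup$ passage is a faithful expansion of the same idea.
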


\begin{pf}
As before, all initial values vanish except $\Y_0^{n,\gamma}$ which,
however, is tight by (CLT). Next, by Proposition~\ref
{stationary_tightness}, we have $\lim_{\delta\downarrow0}\lim_{n\uparrow\infty}
\P_{\nu_\rho}(O^n_{\delta,\varepsilon})=0$ where
\[
O^n_{\delta, \varepsilon} = \Bigl\{ \mathop{\sup_{|t-s|\leq\delta
}}_{s,t\in[0,T]}
\bigl\|X^n_t - X^n_s\bigr\|>\varepsilon
\Bigr\},
\]
and $X^n_t$ may be equal to $\Y^{n,\gamma}_t$, $\M^{n,\gamma}_t$, $\I
^{n,\gamma}_t$, $\B^{n,\gamma}_t$, $\K^{n,\gamma}_t$ or $\langle\M
^{n,\gamma}_t\rangle$. Then
we have by the entropy inequality (\ref{relative_entropy}) that also
$\lim_{\delta\downarrow0} \lim_{n\uparrow\infty}\P_{\mu^n}(O_{\delta
,\varepsilon}) = 0$
which allows to conclude.
\end{pf}

\subsection{Identification of limit points: Proofs of Theorems \texorpdfstring{\protect\ref{th crossover}}{2.2} and \texorpdfstring{\protect\ref{th kpz scaling}}{2.3}}\label{identification}

With tightness (Proposition~\ref{nonstat_tightness}) in hand, we now
identify the limit points of $\{\Y^{n,\gamma}_t\dvtx t\in[0,T]\}_{n\geq
1}$ and its parts in decomposition \eqref{mart_decomposition}. Let
$Q^n$ be the distribution of
\[
\bigl(\Y^{n,\gamma}_t, \M^{n,\gamma}_t,
\I^{n,\gamma}_t, \B^{n,\gamma
}_t,
\K^{n,\gamma}_t, \bigl\langle\M^{n,\gamma}_t\bigr
\rangle\dvtx t\in[0,T] \bigr),
\]
and let $n'$ be a subsequence where $Q^{n'}$ converges to a limit point
$Q$. Let also $\Y_t$, $\M_t$, $\I_t$, $\B_t$, $\K_t$ and $\D_t$ be the
respective limits in distribution of the components.
Since tightness is shown in the uniform topology on $D([0,T], \S'(\R
))$, we have that $\Y_t$, $\M_t$, $\I_t$, $\B_t$, $\K_t$ and $\D_t$
have a.s. continuous paths.

Let now $G_\varepsilon\dvtx \bb R \rightarrow[0,\infty)$ be a smooth
compactly supported function for $0<\varepsilon\leq1$ which
approximates $\iota_\varepsilon(z) = \varepsilon
^{-1}1_{[-1,1]}(z\varepsilon^{-1})$ as in the definition of energy
solution before Theorem~\ref{th kpz scaling}. That is, $\|G_\varepsilon
\|^2_{L^2(\R)}\leq2\|\iota_\varepsilon\|^2_{L^2(\R)}=\varepsilon^{-1}$
and $\lim_{\varepsilon\downarrow0}\varepsilon^{-1/2}\|G_\varepsilon-
\iota_\varepsilon\|_{L^2(\R)}=0$.
Define
\[
\A^{n,\gamma,\varepsilon}_{s,t}(H):= \int_s^t
\frac{1}{n}\sum_{x\in\Z} \bigl(
\nabla^n_x H\bigr) \bigl[\tau_x
\Y^{n,\gamma}_u(G_\varepsilon) \bigr]^2\,du.
\]
Since for fixed $0<\varepsilon\leq1$ the map $\pi_\cdot\mapsto\int_s^t \,du \int \,dx (\nabla H(x) ) \{\pi_u(\tau_{-x} G_\varepsilon
) \}^2$ is continuous in the uniform topology on $D([0,T];\S'(\R))$,
we have subsequentially in distribution that
\[
\lim_{n'\uparrow\infty}\A^{n',\gamma,\varepsilon}_{s,t}(H) = \int
_s^t \,du\int \,dx \bigl(\nabla H(x) \bigr) \bigl
\{\Y_u(\tau_{-x} G_\varepsilon ) \bigr
\}^2 =: \A^{\varepsilon}_{s,t}(H).
\]

\begin{proposition}
\label{stat_lemma1} Suppose the initial distribution is the invariant
measure $\nu_\rho$ and $t\in[0,T]$.

When $\gamma= 1/2$, there is a constant $C=C(a,\rho,R)$ such that
\begin{eqnarray*}
&&\lim_{n\uparrow\infty}\E_{\nu_\rho} \biggl[ \biggl|\B^{n,\gamma}_t(H)
- \frac{a\varphi_b''(\rho)}{4}\A^{n,\gamma,\varepsilon}_{0,t}(H) \biggr|^2 \biggr]
\\
&&\qquad \leq Ct \bigl(\varepsilon+ \varepsilon^{-1}\|G_\varepsilon -
\iota_\varepsilon\|^2_{L^2(\R)} \bigr) \|b
\|^2_{L^4(\nu_\rho)} \bigl[\| \nabla H\|^2_{L^2(\R)}
+ \|\nabla H\|^2_{L^1(\R)} \bigr].
\end{eqnarray*}
Then, in $L^2(\P_{\nu_\rho})$,
$A^\varepsilon_{0,t}(H)$ is a Cauchy $\varepsilon$-sequence. Hence,
\[
\frac{a \varphi_b''(\rho)}{4} \A_{0,t}(H):= \lim_{\varepsilon
\downarrow0}
\frac{a \varphi_b''(\rho)}{4}\A^\varepsilon_{0,t}(H) = \B_t(H).
\]
In particular, we conclude $\A_{s,t}(H) \stackrel{d}{=} \A_{0,t-s}(H)$
does not depend on the specific smoothing family $\{G_\varepsilon\}$.
Moreover, when $1/2<\gamma\leq1$, we have $\B_t(H) = 0$.

In addition, when $1/2\leq\gamma\leq1$,
\begin{eqnarray*}
\lim_{n\uparrow\infty} \E_{\nu_\rho} \biggl[ \biggl|
\I^{n,\gamma}_t(H) - \frac{\varphi_c'(\rho)}{2}\int_0^t
\Y^{n,\gamma}_s(\Delta H)\,ds\biggr |^2 \biggr]& =& 0,
\\
\lim_{n\uparrow\infty} \E_{\nu_\rho} \biggl[ \biggl| \bigl\langle
\M^{n,\gamma
}_t(H)\bigr\rangle- \frac{\varphi_b(\rho)}{2}t\|\nabla H
\|^2_{L^2(\R)} \biggr|^2 \biggr] &=& 0,
\\
\lim_{n\uparrow\infty} \E_{\nu_\rho} \bigl[\bigl |
\K^{n,\gamma}_t(H) \bigr|^2 \bigr] &=& 0.
\end{eqnarray*}
Then, in $L^2(\P_{\nu_\rho})$, $\K_t(H)= 0$ and
\[
\I_t(H) = \frac{\varphi_c'(\rho)}{2}\int_0^t
\Y_s(\Delta H)\,ds \quad\mbox{and}\quad D_t(H) = \frac{\varphi_b(\rho)}{2}t
\|\nabla H\| ^2_{L^2(\R)}.
\]
\end{proposition}

\begin{pf} We prove the limit display for $\B_t(H)$ when $\gamma
=1/2$ which shows, by a Fatou's lemma that
$\E_{\nu_\rho} [ |\B_t(H) - (a\varphi''_b(\rho)/4)\A^\varepsilon
_{0,t}(H) |^2 ]\leq C(a,\rho,R, H)\*t [\varepsilon+ \varepsilon
^{-1}\|G_\varepsilon- \iota_\varepsilon\|^2_{L^2(\R)} ]$.
Therefore, $\A^\varepsilon_{0,t}(H)$, as a sequence in $\varepsilon$,
is Cauchy in $L^2(\P_{\nu_\rho})$.
The arguments for $\I_t(H)$, $\D_t(H)$, and $\K_t(H)$, noting their
forms, are similar; for $\D_t(H)$ and $\K_t(H)$, one might also use
spatial mixing assumed in (IM). To simplify notation, we will call $n=n'$.

Note, for $\ell=\varepsilon n$, that
\begin{eqnarray*}
&&\sum_{x\in\Z}\bigl(\nabla^n_x
H_{\gamma,s}\bigr) \bigl( \bigl(\eta^n_s
\bigr)^{(\ell
)}(x) - \rho \bigr)^2
\\
&&\qquad = \sum_{x\in\Z}\bigl(\nabla^n_x
H_{\gamma,s}\bigr) \biggl(\frac
{1}{2n\varepsilon+1}\sum
_{|z|\leq n\varepsilon}\bigl(\eta^n_s(z+x) - \rho
\bigr) \biggr)^2
\\
&& \qquad= \frac{1+O(n^{-1})}{n}\sum_{x\in\Z} \bigl(
\nabla^n_x H\bigr) \bigl[\tau_x
\Y^{n,\gamma}_s(\iota_\varepsilon) \bigr]^2.
\end{eqnarray*}
Here, the shift by $n^{-1}\lfloor a\varphi'_b(\rho)sn^2/(2n^\gamma
)\rfloor$ in $\nabla^n_x H_{\gamma,s}$ [cf. \eqref{shifted_H}] was
transferred to $\tau_x\Y^{n,\gamma}_s(\iota_\varepsilon)$.

Then, with $\ell= \varepsilon n$, by Theorem~\ref{gbg_L2}, as in the
bound \eqref{tightness_B}, we have
\begin{eqnarray*}
&&\lim_{n\uparrow\infty}\E_{\nu_\rho} \biggl[ \biggl(
\B^{n,\gamma}_t(H) - \frac{a\varphi_{b^n}''(\rho)}{4}\int_0^t
\frac{1}{n} \sum_{x\in\Z} \bigl(
\nabla^n_x H\bigr)\tau_x
\Y^{n,\gamma}_s(\iota_\varepsilon)^2 \,ds
\biggr)^2 \biggr]
\\
&&\qquad = \lim_{n\uparrow\infty}\E_{\nu_\rho} \biggl[ \biggl(
\B_t^{n,\gamma
}(H)
\\
&&\hspace*{46pt}\qquad\quad{} - \frac{a\varphi_{b^n}''(\rho)}{4}\int_0^t
\frac
{1}{n} \sum_{x\in\Z} \bigl(
\nabla^n_x H\bigr)\tau_x \biggl\{
\Y^{n,\gamma}_s(\iota _\varepsilon)^2 -
\frac{\sigma^2_\ell(\rho)}{2\varepsilon} \biggr\}\,ds \biggr)^2 \biggr]
\\
&&\qquad \leq \lim_{n\uparrow\infty}C(a,\rho,R)\bigl\|b^n
\bigr\|^2_{L^4(\nu_\rho
)}t \biggl( \varepsilon+ \frac{1}{\varepsilon^{2+\alpha_0} n^{\alpha
_0}} \biggr)
\\
&& \qquad\quad{}\times \biggl[ \biggl(\frac{1}{n}\sum_{x\in\Z}
\bigl(\nabla^n_x H \bigr)^2 \biggr) +
\biggl(\frac{1}{n}\sum_{x\in\Z} \bigl|\nabla
^n_x H \bigr| \biggr)^2 \biggr].
\end{eqnarray*}
Here, as the sum of $\nabla^n_x H_{\gamma,s}$ on $x$ vanishes, we
introduced the centering constant $(2\varepsilon)^{-1}\sigma^2_\ell(\rho
)$ in the second line.

Now,
\[
\Y^{n,\gamma}_s(\iota_\varepsilon)^2 -
\Y^{n,\gamma}_x(G_\varepsilon)^2 = \bigl[
\Y^{n,\gamma}_s(\iota_\varepsilon) - \Y^{n,\gamma
}_s(G_\varepsilon)
\bigr] \bigl[\Y^{n,\gamma}_s(\iota_\varepsilon) + \Y
^{n,\gamma}_s(G_\varepsilon) \bigr]
\]
and by (IM2)
\[
\C_{\nu_\rho} (\iota_\varepsilon- G_\varepsilon,
\iota_\varepsilon - G_\varepsilon )^{1/2} \C_{\nu_\rho} (
\iota_\varepsilon+ G_\varepsilon, \iota_\varepsilon+ G_\varepsilon
)^{1/2} \leq C(\rho)\varepsilon^{-1/2}\|G_\varepsilon-
\iota_\varepsilon\|_{L^2(\R)}.
\]
Hence, by Schwarz inequality,
\begin{eqnarray*}
&&\lim_{n\uparrow\infty} \E_{\nu_\rho} \biggl[ \biggl(\int
_0^t\frac{1}{n} \sum
_{x\in\Z} \bigl(\nabla^n_x H\bigr)
\tau_x\Y^{n,\gamma}_s(\iota_\varepsilon)^2
\,ds - \A^{n,\gamma,\varepsilon}_{0,t}(H) \biggr)^2 \biggr]
\\
&&\qquad \leq C(\rho)\varepsilon^{-1}\|G_\varepsilon-
\iota_\varepsilon\| ^2_{L^2(\R)}t^2 \biggl(
\frac{1}{n}\sum_{x\in\Z} \bigl|\nabla^n_x
H \bigr| \biggr)^2.
\end{eqnarray*}

Finally, combining these estimates with the inequality $(a+b)^2 \leq
2a^2 + 2b^2$, and by assumption (D) that $\lim_{n\uparrow\infty}\varphi
''_{b^n}(\rho) =\varphi''_b(\rho)$, we complete the proof.
\end{pf}

\begin{proposition}
\label{nonstat_lemma1}
Suppose the initial measures $\{\mu^n\}$ satisfy assumption \textup{(BE)}, and
$t\in[0,T]$.

When $\gamma=1/2$, we have $\A^\varepsilon_{0,t}(H)$ is a Cauchy
$\varepsilon$-sequence in probability with respect to a limit measure
$Q$, and hence
\[
\frac{a\varphi''_b(\rho)}{4}\A_{0,t}(H):= \lim_{\varepsilon
\downarrow0}
\frac{a \varphi_b''(\rho)}{4}\A_{0,t}^\varepsilon(H) = \B_t(H).
\]
On the other hand, when $1/2<\gamma\leq1$, we have $\B_t(H) \equiv0$.

When $1/2\leq\gamma\leq1$, we have $\K^n_t(H)\equiv0$,
\[
\I_t(H) = \frac{\varphi_c'(\rho)}{2}\int_0^t
\Y_s(\Delta H)\,ds \quad\mbox{and}\quad \D_t(H) =
\frac{\varphi_b(\rho)}{2}t\|\nabla H\|^2_{L^2(\R)}.
\]
\end{proposition}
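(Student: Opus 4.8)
The plan is to transfer the stationary conclusions of Proposition \ref{stat_lemma1} to the present non-stationary setting by means of the entropy inequality \eqref{relative_entropy} together with the bounded relative entropy assumption (BE). Under $\P_{\nu_\rho}$ all the relevant $L^2$ limits were already obtained in Proposition \ref{stat_lemma1}; by Chebyshev these give convergence in $\P_{\nu_\rho}$-probability, and since \eqref{relative_entropy} bounds $\P_{\mu^n}(A)$ in terms of $\P_{\nu_\rho}(A)$ and $\sup_n H(\mu^n;\nu_\rho)<\infty$, any sequence of events whose $\P_{\nu_\rho}$-probability tends to $0$ has $\P_{\mu^n}$-probability tending to $0$ as well. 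I will use this passage repeatedly, together with the joint convergence in distribution along the subsequence $n'$ of $\big(\Y^{n',\gamma}_t,\M^{n',\gamma}_t,\I^{n',\gamma}_t,\B^{n',\gamma}_t,\K^{n',\gamma}_t,\langle\M^{n',\gamma}_t\rangle\big)$ to $(\Y_t,\M_t,\I_t,\B_t,\K_t,\D_t)$ (Proposition \ref{nonstat_tightness}), the continuity in the uniform topology on $D([0,T],\S'(\R))$ of the maps $\pi_\cdot\mapsto\int_0^t\pi_s(\Delta H)\,ds$ and $\pi_\cdot\mapsto\A^\varepsilon_{0,t}(H)$ for fixed $\varepsilon$, and the joint distributional convergence $\A^{n',\gamma,\varepsilon}_{0,t}(H)\to\A^\varepsilon_{0,t}(H)$ already recorded in Subsection \ref{identification}.

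First I would dispose of $\I_t$, $\D_t$, $\K_t$, and of $\B_t$ in the range $1/2<\gamma\le1$. Proposition \ref{stat_lemma1} gives that $\I^{n,\gamma}_t(H)-\tfrac{\varphi_c'(\rho)}{2}\int_0^t\Y^{n,\gamma}_s(\Delta H)\,ds$, $\langle\M^{n,\gamma}_t(H)\rangle-\tfrac{\varphi_b(\rho)}{2}t\|\nabla H\|_{L^2(\R)}^2$ and $\K^{n,\gamma}_t(H)$ tend to $0$ in $L^2(\P_{\nu_\rho})$, and that $\B^{n,\gamma}_t(H)\to0$ in distribution under $\P_{\nu_\rho}$ (hence in $\P_{\nu_\rho}$-probability) when $1/2<\gamma\le1$; by the entropy inequality all of these also tend to $0$ in $\P_{\mu^n}$-probability. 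Combining this with the joint convergence in distribution, and applying the continuous mapping theorem to identify the distributional limit of $\int_0^t\Y^{n',\gamma}_s(\Delta H)\,ds$ as $\int_0^t\Y_s(\Delta H)\,ds$, a routine Slutsky-type argument then yields $\I_t(H)=\tfrac{\varphi_c'(\rho)}{2}\int_0^t\Y_s(\Delta H)\,ds$, $\D_t(H)=\tfrac{\varphi_b(\rho)}{2}t\|\nabla H\|_{L^2(\R)}^2$, $\K_t(H)\equiv0$, and $\B_t(H)\equiv0$ for $1/2<\gamma\le1$, $Q$-almost surely.

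For the $\gamma=1/2$ statement on $\A^\varepsilon_{0,t}(H)$, fix $H\in\S(\R)$, $\delta>0$ and $0<\varepsilon,\varepsilon'\le1$, and write
\[
\tfrac{a\varphi_b''(\rho)}{4}\big(\A^{n,\gamma,\varepsilon}_{0,t}(H)-\A^{n,\gamma,\varepsilon'}_{0,t}(H)\big)=\big(\tfrac{a\varphi_b''(\rho)}{4}\A^{n,\gamma,\varepsilon}_{0,t}(H)-\B^{n,\gamma}_t(H)\big)-\big(\tfrac{a\varphi_b''(\rho)}{4}\A^{n,\gamma,\varepsilon'}_{0,t}(H)-\B^{n,\gamma}_t(H)\big).
\]
Applying the first bound of Proposition \ref{stat_lemma1} to each term, with $r(\varepsilon):=\varepsilon+\varepsilon^{-1}\|G_\varepsilon-\iota_\varepsilon\|_{L^2(\R)}^2\to0$, I obtain $\limsup_{n\to\infty}\E_{\nu_\rho}\big[|\A^{n,\gamma,\varepsilon}_{0,t}(H)-\A^{n,\gamma,\varepsilon'}_{0,t}(H)|^2\big]\le C(a,\rho,H)\,t\,(r(\varepsilon)+r(\varepsilon'))$. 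Chebyshev and then \eqref{relative_entropy} give that $\limsup_{n\to\infty}\P_{\mu^n}\big(|\A^{n,\gamma,\varepsilon}_{0,t}(H)-\A^{n,\gamma,\varepsilon'}_{0,t}(H)|>\delta\big)$ is bounded by a quantity that vanishes as $\varepsilon,\varepsilon'\downarrow0$; passing to $n'$ and using the portmanteau theorem for open sets then gives $Q\big(|\A^\varepsilon_{0,t}(H)-\A^{\varepsilon'}_{0,t}(H)|>\delta\big)\le\liminf_{n'}\P_{\mu^{n'}}(\cdots)$, so $\{\A^\varepsilon_{0,t}(H)\}$ is Cauchy in $Q$-probability and its limit $\A_{0,t}(H)$ exists. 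Running the same scheme on $\tfrac{a\varphi_b''(\rho)}{4}\A^{n,\gamma,\varepsilon}_{0,t}(H)-\B^{n,\gamma}_t(H)$ alone, and combining with $\B^{n',\gamma}_t(H)\to\B_t(H)$ in distribution, identifies $\tfrac{a\varphi_b''(\rho)}{4}\A_{0,t}(H)=\B_t(H)$, $Q$-almost surely.

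The one point that needs genuine care is the threading of this double limit: the bounds of Proposition \ref{stat_lemma1} are only asymptotic in $n$, and \eqref{relative_entropy} degrades the polynomial control $r(\varepsilon)$ into a bound that is only logarithmically small, roughly of the form $\big(\log2+\sup_nH(\mu^n;\nu_\rho)\big)\big/\log\big(\delta^2/(C\,t\,r(\varepsilon))\big)$. One must therefore send $n\to\infty$ first for fixed $\varepsilon$, then $\varepsilon\downarrow0$, and only afterwards invoke convergence in distribution along $n'$ to transport the Cauchy property and the identification $\tfrac{a\varphi_b''(\rho)}{4}\A_{0,t}=\B_t$ to the limit measure $Q$; the remaining manipulations are bookkeeping on top of Proposition \ref{stat_lemma1}.
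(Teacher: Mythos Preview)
Your argument is correct and essentially complete, but it takes a different route from the paper's. The paper works directly at the level of the limit measure: it observes that the relative entropy of path measures satisfies $H(\P_{\mu^n};\P_{\nu_\rho})=H(\mu^n;\nu_\rho)$, and then uses lower semi-continuity of relative entropy under weak convergence to conclude that the limit law $Q$ has bounded entropy with respect to the (subsequential) stationary limit law. The entropy inequality \eqref{relative_entropy} is then applied once, at the limit level, to transfer the $L^2(\P_{\nu_\rho})$ statements of Proposition \ref{stat_lemma1} (which are already formulated for the limit objects $\B_t$, $\A^\varepsilon_{0,t}$, $\I_t$, $\D_t$, $\K_t$) directly to $Q$. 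By contrast, you stay at the prelimit level throughout: you apply \eqref{relative_entropy} for each finite $n$, then invoke portmanteau/Slutsky along $n'$ to push the resulting $\P_{\mu^n}$-probability bounds through to $Q$. Your route is more hands-on and requires the careful double-limit threading you flag in your last paragraph, but it avoids having to set up a common subsequence along which both the stationary and non-stationary laws converge (which is implicit in the paper's use of ``$H(Q;\P_{\nu_\rho})<\infty$''). The paper's route is shorter and conceptually cleaner once one accepts the lower semi-continuity step; yours is more explicit about what happens at each scale.
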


\begin{pf}
By assumption (BE), and lower semicontinuity of entropy, the limit
measure $Q$ also has bounded entropy with respect to $\P_{\nu_\rho}$,
$H(Q;\P_{\nu_\rho})<\infty$. When $\gamma=1/2$, by the $L^2(\P_{\nu_\rho
})$ statements in Proposition~\ref{stat_lemma1} and the entropy
inequality \eqref{relative_entropy}, we have for $\delta>0$ that
$\lim_{\varepsilon\downarrow0}Q ( |\B_t(H)-(a\varphi''_b(\rho
)/\break 4)\A^\varepsilon_t(H) |>\delta ) = 0$, and so
$\A^\varepsilon_t(H)$ is Cauchy in probability with respect to $Q$.
Therefore, $\lim_{\varepsilon\downarrow0} (a\varphi''_b(\rho)/4)\A
^\varepsilon_t(H) = \B_t(H)$.

The other claims follow similarly.
\end{pf}

\begin{pf*}{Proof of Theorems \ref{th crossover} and \ref{th kpz scaling}}
Let $H\in\S(\R)$, $t\in[0,T]$, and suppose the initial measures are
$\{\mu^n\}$. When $\gamma=1/2$, by the decomposition \eqref
{mart_decomposition}, Proposition~\ref{nonstat_lemma1}, and tightness
of the constituent processes $\Y_t^{n,\gamma}$, $\M^{n,\gamma}_t$, $\I
^{n,\gamma}_t$, $\B^{n,\gamma}_t$, $\K^{n,\gamma}_t$ and $\langle\M
^{n,\gamma}_t\rangle$ in the uniform topology, any limit point of
\[
\bigl(\Y^{n,\gamma}_t, \M^{n,\gamma}_t,
\I^{n,\gamma}_t, \B^{n,\gamma
}_t,
\K^{n,\gamma}_t, \bigl\langle\M^{n,\gamma}_t\bigr
\rangle\dvtx t\in[0,T] \bigr)
\]
satisfies
\[
\M_t(H) = \Y_t(H) - \Y_0(H) -
\frac{\varphi_c'(\rho)}{2}\int_0^t\Y _s(
\triangle H)\,ds -\bigl(a\varphi_b''(
\rho)/2\bigr)\A_t(H).
\]
However, when $1/2<\gamma\leq1$,
%
\begin{equation}
\label{proof_thm2.4} \M_t(H) = \Y_t(H) - \Y_0(H)
- \frac{\varphi_c'(\rho)}{2}\int_0^t \Y
_s(\triangle H)\,ds.
\end{equation}

Also, in both cases, $\Y_0(H) = \bar\Y_0(H)$ by assumption (CLT).

We also claim in both cases that $\M_t(H)$ is a continuous martingale
with a quadratic variation
\[
\bigl\langle\M_t(H)\bigr\rangle = \frac{\varphi_b(\rho)}{2}t\|\nabla H\|
^2_{L^2(\R)}.
\]
Indeed, by Proposition~\ref{nonstat_lemma1}, any limit point of the
quadratic variation sequence equals $\mc D_t(H) = (\varphi_b(\rho)/2)t\|
\nabla H\|^2_{L^2(\R)}$. Next, $\M_t(H)$ as the limit of martingales
with respect to the uniform topology is a continuous martingale. Also,
by the triangle inequality, Doob's inequality and the quadratic
variation bound \eqref{quad_var_bound},
\begin{eqnarray*}
&&\sup_n \E_{\nu_\rho} \Bigl[\sup_{0\leq s\leq t}
\bigl|\M^{n,\gamma}_s(H) - \M ^{n,\gamma}_{s^-}(H)\bigr|
\Bigr] \\
&&\qquad \leq 2\sup_n \E_{\nu_\rho} \Bigl[ \sup
_{u\in[0,t]}\bigl|\M^{n,\gamma
}_u(H)\bigr|^2
\Bigr]^{{1}/{2}}
\\
&&\qquad\leq 2\sup_n \E_{\nu_\rho} \bigl[\bigl\langle
M^{n,\gamma}_t(H)\bigr\rangle \bigr]^{{1}/{2}}  \leq
C(a,T)\|b\|_{L^1(\nu_\rho)}\|\nabla H\| ^2_{L^2(\R)}.
\end{eqnarray*}
Then, by Corollary VI.6.30 of
\cite{JS}, $(\M_t^{n,\gamma}(H), \langle\M^{n,\gamma}_t(H)\rangle)$
converges on a subsequence in distribution to $(\M_t(H), \langle\M
_t(H)\rangle)$. Since, also $\langle\M^{n,\gamma}_t(H)\rangle$
converges on a subsequence in distribution to $\D_t(H)=(\varphi_b(\rho
)/2)t\|\nabla H\|^2_{L^2(\R)}$, we have $\langle\M_t(H)\rangle=
(\varphi_b(\rho)/2)t\|\nabla H\|^2_{L^2(\R)}$.

By Proposition~\ref{nonstat_lemma1}, when $\gamma=1/2$, $\Y_t$ is a
``probability energy solution'' corresponding to the stochastic Burgers
equation \eqref{BE}. But, if initially $\mu^n\equiv\nu_\rho$, by
Proposition~\ref{stat_lemma1}, $\Y_t$ is an ``$L^2$ energy solution.''
This completes the proof of Theorem~\ref{th kpz scaling}.

However, when $1/2<\gamma\leq1$, by the form of $\M_t(H)$ in \eqref
{proof_thm2.4}, we conclude $\Y_t(H)$ solves the Ornstein--Uhlenbeck
equation \eqref{OU2}. By uniqueness, all subsequences converge to the
same limit, and we obtain Theorem~\ref{th crossover}.
\end{pf*}

\section{Proof of the generalized Boltzmann--Gibbs principle}\label{BG}

We start by recalling the notion of $H_{1,n}$ and $H_{-1,n}$ spaces \cite{KV}.
For $n\geq n_0$, recall $S_n = (L_n + L^*_n)/2$ [cf. near \eqref
{dirichlet_form}], and define the $H_{1,n}$ seminorm $\|\cdot\|_{1,n}$
on $L^2(\nu_\rho)$ functions by
\[
\|f\|_{1,n}^2:= E_{\nu_\rho} \bigl[f(-S_n)f
\bigr] = n^2D_{\nu_\rho}(f).
\]
The Hilbert space $H_{1,n}$ is then the completion of functions with
finite $H_{1,n}$ norm modulo norm-zero functions. In particular, local
bounded functions are dense in~$H_{1,n}$.

Correspondingly, one can define the dual seminorm $\|\cdot\|_{-1,n}$
with respect to the $L^2(\nu_\rho)$ inner-product by
\[
\|f\|_{-1,n}:= \sup \biggl\{ \frac{E_{\nu_\rho}[f\phi]}{\|\phi\|
_{1,n}}\dvtx \phi\neq0 \mbox{ local, bounded} \biggr\},
\]
and the Hilbert space $H_{-1,n}$ which is the completion over those
functions with finite $\|\cdot\|_{-1,n}$ norm modulo norm-zero functions.

We now state a helping lemma for the results in this section. Define
the restricted Dirichlet form on local, bounded functions with respect
to the grand canonical measure $\nu_\rho$ as
\[
D_{\nu_\rho, \ell}(\phi) = \sum_{x,x+1\in\Lambda_\ell}E_{\nu_\rho
}
\bigl[ b^{R,n}_x(\eta) \bigl(\nabla_{x,x+1}\phi(
\eta) \bigr)^2 \bigr].
\]

Recall the collection $\eta^c_r:= \{\eta(x)\dvtx x\notin\Lambda_r\}$.

\begin{proposition} \label{crucial estimate}
Let $r\dvtx \Omega\to\mathbb R$ be an $L^4(\nu_\rho)$ function and $\ell
_0\geq2$.
Suppose that
$E_{\nu_\rho}[r|\eta^{(\ell_0)}, \eta^c_{\ell_0}]=0$ a.s. Then, for
local, bounded functions $\phi$, we have
\[
\bigl|E_{\nu_{\rho}}\bigl[ r(\eta) \phi(\eta)\bigr] \bigr| \leq E_{\nu_\rho}
\biggl[ W \biggl(\sum_{x\in\Lambda_{\ell_0}}\eta(x),
\ell_0,\eta_{\ell
_0}^c,n \biggr)^2
\biggr]^{1/4}\|r\|_{L^4(\nu_\rho)} D_{\nu_\rho,\ell
_0}^{1/2}(
\phi).
\]
\end{proposition}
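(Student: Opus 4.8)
The plan is to disintegrate $\nu_\rho$ over the ``hyperplane data'', apply the Poincar\'e inequality \eqref{poincare inequality} on each fibre, and then recombine via H\"older's inequality. First I would note that conditioning $\eta\sim\nu_\rho$ on the pair $(\eta^{(\ell_0)},\eta^c_{\ell_0})$ is the same as conditioning on $\big(\sum_{x\in\Lambda_{\ell_0}}\eta(x),\,\eta^c_{\ell_0}\big)$, and that the corresponding regular conditional law of $\eta$ is precisely the canonical measure $\nu_{k,\ell_0,\xi}$ with $k=\sum_{x\in\Lambda_{\ell_0}}\eta(x)$ and $\xi=\eta$. The hypothesis $E_{\nu_\rho}[r\,|\,\eta^{(\ell_0)},\eta^c_{\ell_0}]=0$ then says exactly that $E_{\nu_{k,\ell_0,\xi}}[r]=0$ for $\nu_\rho$-a.e.\ $(k,\xi)$. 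Since $r\in L^4(\nu_\rho)$ and $\phi$ is local and bounded, $r\in L^4(\nu_{k,\ell_0,\xi})$ for $\nu_\rho$-a.e.\ $(k,\xi)$ and $r\phi\in L^1(\nu_\rho)$, so by the tower property and the triangle inequality $\big|E_{\nu_\rho}[r\phi]\big|\leq E_{\nu_\rho}\big[\,\big|E_{\nu_{k,\ell_0,\xi}}[r\phi]\big|\,\big]$. (We may assume $E_{\nu_\rho}[W(\sum_{x\in\Lambda_{\ell_0}}\eta(x),\ell_0,\eta^c_{\ell_0},n)^2]<\infty$, since otherwise the claimed bound is trivial; in particular the spectral gap is then $\nu_\rho$-a.s.\ positive.)

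Next I would estimate the inner expectation for fixed $(k,\xi)$. Because $E_{\nu_{k,\ell_0,\xi}}[\phi]$ is constant with respect to $\nu_{k,\ell_0,\xi}$ and $E_{\nu_{k,\ell_0,\xi}}[r]=0$, one has $E_{\nu_{k,\ell_0,\xi}}[r\phi]=E_{\nu_{k,\ell_0,\xi}}\big[r\,(\phi-E_{\nu_{k,\ell_0,\xi}}[\phi])\big]$, and Cauchy--Schwarz followed by the Poincar\'e inequality \eqref{poincare inequality} gives
$$
\big|E_{\nu_{k,\ell_0,\xi}}[r\phi]\big| \ \leq \ \|r\|_{L^2(\nu_{k,\ell_0,\xi})}\,{\rm Var}(\phi,\nu_{k,\ell_0,\xi})^{1/2} \ \leq \ \|r\|_{L^2(\nu_{k,\ell_0,\xi})}\, W(k,\ell_0,\xi,n)^{1/2}\, \mathcal{D}_n(\phi,\nu_{k,\ell_0,\xi})^{1/2}.
$$
Substituting this into the previous bound and applying H\"older's inequality with exponents $(4,4,2)$ (note $\tfrac14+\tfrac14+\tfrac12=1$) to the three factors $\|r\|_{L^2(\nu_{k,\ell_0,\xi})}$, $W(k,\ell_0,\xi,n)^{1/2}$ and $\mathcal{D}_n(\phi,\nu_{k,\ell_0,\xi})^{1/2}$ yields
$$
\big|E_{\nu_\rho}[r\phi]\big| \ \leq \ E_{\nu_\rho}\big[\|r\|_{L^2(\nu_{k,\ell_0,\xi})}^4\big]^{1/4}\, E_{\nu_\rho}\big[W(k,\ell_0,\xi,n)^2\big]^{1/4}\, E_{\nu_\rho}\big[\mathcal{D}_n(\phi,\nu_{k,\ell_0,\xi})\big]^{1/2}.
$$

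Finally I would simplify the first and third factors. Conditional Jensen gives $\|r\|_{L^2(\nu_{k,\ell_0,\xi})}^4=\big(E_{\nu_{k,\ell_0,\xi}}[r^2]\big)^2\leq E_{\nu_{k,\ell_0,\xi}}[r^4]$, so the tower property yields $E_{\nu_\rho}[\|r\|_{L^2(\nu_{k,\ell_0,\xi})}^4]\leq E_{\nu_\rho}[r^4]=\|r\|_{L^4(\nu_\rho)}^4$; and the tower property together with the definitions of $\mathcal{D}_n(\cdot,\nu_{k,\ell_0,\xi})$ and $D_{\nu_\rho,\ell_0}$ gives $E_{\nu_\rho}[\mathcal{D}_n(\phi,\nu_{k,\ell_0,\xi})]=\tfrac12 D_{\nu_\rho,\ell_0}(\phi)\leq D_{\nu_\rho,\ell_0}(\phi)$. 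Combining these with the middle factor, which is already the one appearing in the statement, completes the proof. The step I expect to demand the most care is not any of these estimates --- all routine applications of Cauchy--Schwarz, the assumed Poincar\'e inequality, conditional Jensen, and H\"older --- but rather the measure-theoretic bookkeeping: verifying that $\{\nu_{k,\ell_0,\xi}\}$ genuinely provides a regular conditional distribution for $\nu_\rho$ given the hyperplane data, so that the tower property can be applied freely, and tracking integrability at each step since $r$ is only assumed to lie in $L^4(\nu_\rho)$.
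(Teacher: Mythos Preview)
Your proof is correct and follows essentially the same disintegrate--estimate--recombine strategy as the paper. The only variation is cosmetic: on each hyperplane the paper solves $r=-\mathcal{S}_{n,\G_{k,\ell_0,\xi}}u$ and uses the dual spectral-gap bound $E_{\nu_{k,\ell_0,\xi}}[ru]\leq W\,E_{\nu_{k,\ell_0,\xi}}[r^2]$, whereas you apply the Poincar\'e inequality directly to $\phi$ and then a single three-term H\"older in place of the paper's iterated Cauchy--Schwarz --- both routes give the identical fibrewise estimate and final bound.
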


\begin{pf}
Recall, from Section~\ref{notation}, for $k\geq0$, $\ell_0\geq2$
and $\xi\in\Omega$, the space
\[
\G_{k,\ell_0,\xi} = \biggl\{\eta: \sum_{x\in\Lambda_{\ell_0}} \eta
(x) = k, \eta(y) = \xi(y) \mbox{ for }y\notin\Lambda_{\ell_0} \biggr\}
\]
and generator $S_{n,\G}:=S_{n,\G_{k,\ell_0,\xi}}$\vspace*{1pt} which governs the
evolution of the symmetrized process on $\G_{k, \ell_0, \xi}$. Suppose
$W(k,\ell_0,\xi,n)<\infty$ so that the measure $\nu_{k,\ell_0,\xi}$ is
the unique invariant measure for the process.

Given $E_{\nu_\rho}[r|\sum_{|x|\leq\ell_0}\eta(x)=k, \eta(y)=\xi(y)
\mbox{ for }y\notin\Lambda_{\ell_0}]=E_{\nu_{k,\ell_0,\xi}}[r]=0$,
we have $r$ restricted to $\G_{k,\ell_0, \xi}$ is orthogonal to
constant functions and therefore belongs to the range of $-S_{n,\G}$,
that is the equation $r = -S_{n,\G} u$ can be solved for some function
$u\dvtx \G_{k,\ell_0,\xi} \rightarrow\R$.

Now, with $k=\sum_{x\in\Lambda_{\ell_0}}\eta(x)$ and $\xi= \eta
^c_{\ell_0}$, $W(k,\ell_0,\eta^c_{\ell_0},n)<\infty$ a.s. by assumption
(G). Hence,
\begin{eqnarray*}
\bigl|E_{\nu_\rho}[r\phi] \bigr| &=& \bigl|E_{\nu_\rho} \bigl[ E_{\nu_\rho
}
\bigl[r\phi|\eta^{(\ell_0)}, \eta_{\ell_0}^c\bigr] \bigr]\bigr |
\\
&=& \bigl|E_{\nu_\rho} \bigl[ E_{\nu_\rho}\bigl[ (-S_{n,\G} u)
\phi|\eta^{(\ell
_0)}, \eta_{\ell_0}^c\bigr] \bigr] \bigr|
\\
&\leq& E_{\nu_\rho} \bigl[ E_{\nu_\rho}\bigl[u (-S_{n,\G} u)|
\eta^{(\ell_0)}, \eta_{\ell_0}^c\bigr]^{1/2}
 E_{\nu_\rho}\bigl[\phi(-S_{n,\G} \phi )|
\eta^{(\ell_0)}, \eta_{\ell_0}^c\bigr]^{1/2}
\bigr].
\end{eqnarray*}
The last line follows as $-S_{n,\G}$ is a nonnegative symmetric
operator and, therefore, has a square root.

Further, since $W(k,\ell_0, \xi,n)$ is the reciprocal of the spectral
gap for $-S_{n,\G}$, we have
\[
E_{\nu_\rho} \bigl[ru|\eta^{(\ell_0)},\eta^c_{\ell_0}
\bigr] \leq W\bigl(k,\ell_0, \eta_{\ell_0}^c,n
\bigr)E_{\nu_\rho} \bigl[r^2|\eta^{(\ell_0)},\eta
^c_{\ell_0} \bigr].
\]
Therefore, we conclude
\[
\bigl|E_{\nu_\rho}[r\phi]\bigr |\leq E_{\nu_\rho} \biggl[ W \biggl(\sum
_{x\in
\Lambda_{\ell_0}}\eta(x),\ell_0,
\eta_{\ell_0}^c,n \biggr) E_{\nu_\rho
}
\bigl[r^2|\eta^{(\ell_0)}, \eta_{\ell_0}^c
\bigr] \biggr]^{1/2} D^{1/2}_{\nu_\rho
,\ell_0}(\phi).
\]
The desired bound now follows from Schwarz inequality.\vadjust{\goodbreak}
\end{pf}

The following bound on the variance of additive functionals is the main
way we control the fluctuations of several quantities in the sequel. A
proof of Proposition~\ref{KV} can be found in Appendix 1.6 of \cite{KL}.

To simplify notation, for the rest of the section, we will drop the
superscript ``$n$'' and write $\eta^n = \eta$.

%
\begin{proposition}
\label{KV}
Let $r\dvtx \Omega\to\mathbb R$ be a mean-zero $L^2(\nu_\rho)$ function,
\mbox{$\varphi_r(\rho)=0$}. Then\vspace*{-1pt}
\[
\mathbb{E}_{\nu_\rho} \biggl[ \biggl(\int_0^t
r(\eta_s) \,ds \biggr)^2 \biggr] \leq 20 t \|r
\|_{-1,n}^2.
\]
\end{proposition}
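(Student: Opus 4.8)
The plan is to use the resolvent decomposition from Kipnis--Varadhan theory. Since the estimate is vacuous when $\|r\|_{-1,n}=\infty$, I may assume $\|r\|_{-1,n}<\infty$. For $\lambda>0$, introduce the resolvent $u_\lambda:=(\lambda-L_n)^{-1}r$; by assumption (M), $L_n$ generates a strongly continuous contraction semigroup on $L^2(\nu_\rho)$, so $u_\lambda$ is a well-defined element of the domain of $L_n$. Pairing the identity $(\lambda-L_n)u_\lambda=r$ with $u_\lambda$ in $L^2(\nu_\rho)$ and using that the symmetric part of $L_n$ is $S_n$, so that $E_{\nu_\rho}[u_\lambda(-L_nu_\lambda)]=E_{\nu_\rho}[u_\lambda(-S_nu_\lambda)]=\|u_\lambda\|_{1,n}^2$, gives the energy identity
\[
\lambda\,\|u_\lambda\|_{L^2(\nu_\rho)}^2+\|u_\lambda\|_{1,n}^2 \ = \ E_{\nu_\rho}[\,r\,u_\lambda\,] \ \leq \ \|r\|_{-1,n}\,\|u_\lambda\|_{1,n},
\]
the last bound being the definition of $\|\cdot\|_{-1,n}$ applied to $u_\lambda$. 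Hence $\|u_\lambda\|_{1,n}\leq\|r\|_{-1,n}$ and $\lambda\,\|u_\lambda\|_{L^2(\nu_\rho)}^2\leq\|r\|_{-1,n}^2$.

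Next I would write the Dynkin decomposition: $M^\lambda_t:=u_\lambda(\eta_t)-u_\lambda(\eta_0)-\int_0^tL_nu_\lambda(\eta_s)\,ds$ is a mean-zero $L^2(\P_{\nu_\rho})$ martingale, and since $L_nu_\lambda=\lambda u_\lambda-r$,
\[
\int_0^t r(\eta_s)\,ds \ = \ u_\lambda(\eta_0)-u_\lambda(\eta_t)+\lambda\int_0^t u_\lambda(\eta_s)\,ds+M^\lambda_t .
\]
Then I bound the three groups of terms in $L^2(\P_{\nu_\rho})$ using stationarity of $\nu_\rho$: the boundary terms by $\E_{\nu_\rho}[(u_\lambda(\eta_0)-u_\lambda(\eta_t))^2]\leq 4\|u_\lambda\|_{L^2(\nu_\rho)}^2\leq 4\lambda^{-1}\|r\|_{-1,n}^2$; the time integral by Cauchy--Schwarz, $\E_{\nu_\rho}[(\lambda\int_0^t u_\lambda(\eta_s)\,ds)^2]\leq\lambda^2t^2\|u_\lambda\|_{L^2(\nu_\rho)}^2\leq\lambda t^2\|r\|_{-1,n}^2$; and the martingale term through its predictable bracket, $\E_{\nu_\rho}[(M^\lambda_t)^2]=\E_{\nu_\rho}[\langle M^\lambda\rangle_t]=t\,E_{\nu_\rho}\big[L_n(u_\lambda^2)-2u_\lambda L_nu_\lambda\big]=2t\|u_\lambda\|_{1,n}^2\leq 2t\|r\|_{-1,n}^2$. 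Using $(a+b+c)^2\leq 3(a^2+b^2+c^2)$ and the balancing choice $\lambda=2/t$, the three contributions are each at most $2t\|r\|_{-1,n}^2$, and one obtains $\E_{\nu_\rho}[(\int_0^t r(\eta_s)\,ds)^2]\leq 18\,t\,\|r\|_{-1,n}^2\leq 20\,t\,\|r\|_{-1,n}^2$, as claimed.

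The conceptual ingredients above (the resolvent decomposition and the scaling $\lambda\sim 1/t$) are classical; the real work is the approximation arguments forced by the fact that $u_\lambda$ is in general neither local nor bounded. Two points need care. First, to insert $u_\lambda$ into the supremum defining $\|r\|_{-1,n}$, which a priori ranges only over local bounded $\phi$, one approximates $u_\lambda$ simultaneously in the $H_{1,n}$ and $L^2(\nu_\rho)$ norms by local bounded functions; this is legitimate since such functions are dense and, because $\|r\|_{-1,n}<\infty$, the linear functional $\phi\mapsto E_{\nu_\rho}[r\phi]$ is continuous for $\|\cdot\|_{1,n}$. Second, to justify $\E_{\nu_\rho}[\langle M^\lambda\rangle_t]=2t\|u_\lambda\|_{1,n}^2$ when $u_\lambda^2$ need not lie in the domain of $L_n$, one proves the full estimate first for bounded $r$, where all manipulations (in particular squares lying in the domain) are valid, and then passes to the limit along bounded approximations $r_k\to r$ in $L^2(\nu_\rho)$ for which $\|r_k\|_{-1,n}$ stays controlled. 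Both steps are standard, and carried out in detail in \cite{KL}[Appendix 1.6].
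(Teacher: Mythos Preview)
Your proof is correct and is precisely the classical Kipnis--Varadhan resolvent argument that the paper defers to; the paper does not supply its own proof but simply cites \cite{KL}[Appendix~1.6], which is exactly the computation you have reproduced (resolvent energy identity, Dynkin decomposition, and the choice $\lambda\sim 1/t$). Your handling of the two approximation issues---extending the $H_{-1,n}$ duality to $u_\lambda$ and the domain of $u_\lambda^2$---matches the standard treatment there as well.
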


The proof of Theorem~\ref{gbg_L2}, given at the end of the section, is
made through a succession of steps, labeled ``one-block,''
``renormalization step,'' ``two-blocks'' and ``equivalence of ensembles'' estimates.

\begin{lemma}[(One-block estimate)]
\label{globalone_block}
Let $f\dvtx \Omega\to\mathbb R$ be a local $L^4(\nu_\rho)$ function
supported on sites in $\Lambda_{\ell_0}$ such that
$\varphi_f(\rho)=0$. Then there exists a constant $C=C(\rho)$ such
that for $\ell\geq\ell_0$, $t \geq0$ and $h\in{\ell^1(\Z)\cap\ell
^2(\mathbb{Z})}$:
\begin{eqnarray*}
&&\bb E_{\nu_\rho} \biggl[ \biggl(\int_0^t
\sum_{x\in\mathbb{Z}}h(x)\tau_x \bigl\{f(
\eta_s)-E_{\nu_{\rho}}\bigl[f(\eta_s)|
\eta_s^{(\ell)}, (\eta_s)_\ell
^c\bigr] \bigr\} \,ds \biggr)^2 \biggr]
\\
&&\qquad \leq Ct\frac{\ell^3}{n^2} \|f\|^2_{L^4(\nu
_\rho)}\sum
_{x\in{\mathbb{Z}}}h^2(x).
\end{eqnarray*}
\end{lemma}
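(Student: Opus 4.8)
We outline the argument. The estimate will be deduced from the Kipnis--Varadhan bound of Proposition \ref{KV} together with the $H_{-1,n}$ duality bound of Proposition \ref{crucial estimate}. Set
$$r(\eta) \ := \ \sum_{x\in\Z} h(x)\,\tau_x\big\{f(\eta) - E_{\nu_\rho}[f(\eta)\,|\,\eta^{(\ell)},\eta^c_\ell]\big\},$$
which lies in $L^2(\nu_\rho)$ since $h\in\ell^1(\Z)$ and each $\tau_x$ is $\nu_\rho$--preserving. The first point is to check $\varphi_r(\rho)=0$, as required to apply Proposition \ref{KV}: since for $k\geq0$, $\xi\in\Omega$ the canonical measure on the hyperplane $\G_{k,\ell,\xi}$ does not depend on the global density, tilting by $e^{\lambda\sum_{\Lambda_\ell}(\cdot-\rho)}$ being constant on each hyperplane, one has $E_{\nu_z}[f\,|\,\eta^{(\ell)},\eta^c_\ell]=E_{\nu_\rho}[f\,|\,\eta^{(\ell)},\eta^c_\ell]$ a.s. for every $z\in(\rho_*,\rho^*)$, so by the tower property $\varphi_r(z)\equiv0$. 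Hence Proposition \ref{KV} reduces the claim to showing
$$\|r\|_{-1,n}^2 \ \leq \ C\,\frac{\ell^3}{n^2}\,\|f\|^2_{L^4(\nu_\rho)}\sum_{x\in\Z}h^2(x).$$

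To bound $\|r\|_{-1,n}$ I would use its variational characterization. For a local bounded $\phi$, changing variables via translation invariance,
$$\bb E_{\nu_\rho}[r\phi] \ = \ \sum_{x\in\Z} h(x)\,\bb E_{\nu_\rho}\Big[\big(f - E_{\nu_\rho}[f\,|\,\eta^{(\ell)},\eta^c_\ell]\big)\,\tau_{-x}\phi\Big].$$
I apply Proposition \ref{crucial estimate}, with box $\Lambda_\ell$, to the function $f-E_{\nu_\rho}[f\,|\,\eta^{(\ell)},\eta^c_\ell]$, whose conditional expectation given $(\eta^{(\ell)},\eta^c_\ell)$ vanishes and whose $L^4(\nu_\rho)$ norm is at most $2\|f\|_{L^4(\nu_\rho)}$ by Jensen's inequality; combined with assumption (G), which gives $\bb E_{\nu_\rho}\big[W(\sum_{x\in\Lambda_\ell}\eta(x),\ell,\eta^c_\ell,n)^2\big]^{1/4}\le C^{1/4}\ell$, this bounds the $x$--th summand by $2C^{1/4}\ell\,\|f\|_{L^4(\nu_\rho)}\,D^{1/2}_{\nu_\rho,\ell}(\tau_{-x}\phi)$. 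Since $\nu_\rho$ and the gradient rates $b^R_x=\tau_xb^R_0$ are translation invariant (condition (R1)), a change of variables identifies $D_{\nu_\rho,\ell}(\tau_{-x}\phi)$ with $D_{\nu_\rho,x+\Lambda_\ell}(\phi):=\sum_{z,z+1\in x+\Lambda_\ell}\bb E_{\nu_\rho}\big[b^R_z(\eta)(\nabla_{z,z+1}\phi)^2\big]$, the Dirichlet form of $\phi$ over the shifted box $x+\Lambda_\ell$.

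Summing over $x$ and applying Cauchy--Schwarz,
$$\bb E_{\nu_\rho}[r\phi] \ \leq \ 2C^{1/4}\ell\,\|f\|_{L^4(\nu_\rho)}\Big(\sum_x h^2(x)\Big)^{1/2}\Big(\sum_x D_{\nu_\rho,x+\Lambda_\ell}(\phi)\Big)^{1/2}.$$
Each bond $(z,z+1)$ belongs to exactly $2\ell$ of the boxes $\{x+\Lambda_\ell\}_{x\in\Z}$, so $\sum_x D_{\nu_\rho,x+\Lambda_\ell}(\phi)\le 2\ell\cdot 2D_{\nu_\rho}(\phi)=4\ell\,n^{-2}\|\phi\|_{1,n}^2$. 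Plugging this in yields $\bb E_{\nu_\rho}[r\phi]\le C'\frac{\ell^{3/2}}{n}\|f\|_{L^4(\nu_\rho)}\big(\sum_xh^2(x)\big)^{1/2}\|\phi\|_{1,n}$, hence the desired bound on $\|r\|_{-1,n}^2$, and Proposition \ref{KV} finishes the proof.

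The routine computations are the two duality reductions; the \emph{main obstacle}, such as it is, is the bookkeeping in passing from the single-box estimate of Proposition \ref{crucial estimate} to the full sum over $x$: one must correctly transport the shift $\tau_{-x}$ through the localized Dirichlet form using translation invariance of both $\nu_\rho$ and the gradient rates, and then exploit that the boxes $\{x+\Lambda_\ell\}$ overlap only $O(\ell)$--fold (rather than $O(\ell^2)$), so that only a single extra power of $\ell$ is lost at this step; this is exactly what produces $\ell^3$ and not a worse power. The crucial quantitative input making everything come out at this order is assumption (G), the $O(\ell^4)$ bound on the second moment of the inverse spectral gap.
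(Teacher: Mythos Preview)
Your proof is correct and follows essentially the same route as the paper: reduce via Proposition~\ref{KV} to the $H_{-1,n}$ norm, shift the translation onto $\phi$, apply Proposition~\ref{crucial estimate} with box $\Lambda_\ell$ and assumption (G), and then use that the shifted boxes overlap $O(\ell)$-fold to pass from the localized Dirichlet forms to the full one. The only cosmetic difference is that the paper writes the last step via the identity $2ab=\inf_{\kappa>0}\{\kappa a^2+\kappa^{-1}b^2\}$ rather than Cauchy--Schwarz, and it does not spell out the (immediate) mean-zero check for the integrand.
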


\begin{pf}
By Proposition~\ref{KV}, we need only to estimate the $H_{-1,n}$ norm
of the integrand [which is in $L^2(\nu_\rho)$ since $h\in\ell^1(\Z)$].
Bound the $H_{-1,n}$ norm multiplied by $n$, using Proposition~\ref
{crucial estimate}, as follows:\vspace*{-1pt}
%
\begin{eqnarray}
\label{global_eq_1} &&\sup_\phi D^{-{1}/{2}}_{\nu_\rho}(
\phi) E_{\nu_\rho} \biggl[\sum_{x\in\mathbb{Z}}h(x)
\tau_x \bigl\{f-E_{\nu_{\rho}}\bigl[f|\eta^{(\ell)}, \eta
^c_{\ell}\bigr] \bigr\} \phi \biggr]
\nonumber
\\
&&\qquad = \sup_\phi\sum_{x\in\Z}
D^{-{1}/{2}}_{\nu_\rho}(\phi)E_{\nu
_\rho} \bigl[h(x)
\tau_x \bigl( f - E_{\nu_\rho}\bigl[f|\eta^{(\ell)}, \eta
^c_{\ell}\bigr] \bigr) \phi \bigr]
\\
&&\qquad \leq \sup_\phi D^{-{1}/{2}}_{\nu_\rho}(\phi)
\nonumber\\
&&\qquad\quad{} \times\sum_{x\in\Z} \bigl|h(x)\bigr| E_{\nu_\rho}
\biggl[ W \biggl(\sum_{x\in\Lambda_\ell}\eta(x), \ell,
\eta_\ell^c,n \biggr)^2 \biggr]^{{1}/{4}}
\|f\|_{L^4(\nu_\rho)} D^{{1}/{2}}_{\nu_\rho,\ell}(\tau_{-x} \phi).
\nonumber
\end{eqnarray}

Observe now, by translation-invariance of $\nu_\rho$, that
\[
\sum_{x\in\Z} D_{\nu_\rho,\ell}(\tau_{-x}
\phi) \leq (2\ell+1) D_{\nu_\rho}(\phi).
\]
Then, noting the spectral gap assumption (G), and using the relation
$2ab = \inf_{\kappa>0}[a^2\kappa+ \kappa^{-1}b^2]$, we bound \eqref
{global_eq_1} by
\begin{eqnarray*}
&&\sup_\phi D^{-{1}/{2}}_{\nu_\rho}(\phi)\inf
_{\kappa>0} \biggl\{ \kappa C\ell^2\|f
\|^2_{L^4(\nu_\rho)}\sum_{x\in\Z}
h^2(x) + \kappa ^{-1}C\ell D_{\nu_\rho}(\phi) \biggr\}
\\
&& \qquad\leq \biggl(C\ell^3 \|f\|_{L^4(\nu_\rho)}^2\sum
_{x\in\Z
} h^2(x) \biggr)^{{1}/{2}},
\end{eqnarray*}
where $C=C(\rho)$ is a constant. This completes the proof.
\end{pf}

Now we double the size of the box in the conditional expectation.

\begin{lemma}[(Renormalization step)]
\label{globalrenormalization}
Let $f\dvtx \Omega\to\mathbb R$ be a local $L^5(\nu_\rho)$ function
supported on sites in $\Lambda_{\ell_0}$
such that $\varphi_f(\rho)=\varphi_f'(\rho)=0$. There exists a constant
$C=C(\rho, \ell_0)$ such that for $\ell\geq\ell_0$, $t \geq0$ and
$h\in{\ell^1(\Z)\cap\ell^2(\mathbb{Z})}$:
\begin{eqnarray*}
&&\bb E_{\nu_\rho} \biggl[ \biggl(\int_0^t
\sum_{x\in{\mathbb{Z}}}\tau_x \bigl\{
E_{\nu_{\rho}}\bigl[f(\eta_s)|\eta_s^{(\ell)},
(\eta_s)_\ell^c\bigr]
\\
&&\hspace*{63pt}\quad{} -E_{\nu_{\rho}}\bigl[f(\eta _s)|\eta_s^{(2\ell)},
(\eta_s)_{2\ell}^c\bigr] \bigr\}h(x) \,ds
\biggr)^2 \biggr]
\\
&&\qquad \leq C\|f\|^2_{L^5(\nu_\rho)}t\frac{\ell
}{n^2}\sum
_{x\in{\mathbb{Z}}}h^2(x).
\end{eqnarray*}

On the other hand, when only $\varphi_f(\rho)=0$ is known,
\begin{eqnarray*}
&&\bb E_{\nu_\rho} \biggl[ \biggl(\int_0^t
\sum_{x\in{\mathbb{Z}}}\tau_x \bigl\{
E_{\nu_{\rho}}\bigl[f(\eta_s)|\eta_s^{(\ell)},
(\eta_s)_\ell^c\bigr]
\\
&&\hspace*{63pt}\quad{} -E_{\nu_{\rho}}\bigl[f(\eta _s)|\eta_s^{(2\ell)},
(\eta_s)_{2\ell}^c\bigr] \bigr\}h(x) \,ds
\biggr)^2 \biggr]
\\
&&\qquad \leq C\|f\|^2_{L^5(\nu_\rho)}t\frac{\ell
^2}{n^2}\sum
_{x\in{\mathbb{Z}}}h^2(x).
\end{eqnarray*}
\end{lemma}

\begin{pf} We prove the first statement as the second is similar. Since
\[
E_{\nu_\rho} \bigl[E_{\nu_\rho} \bigl[f(\eta) |\eta^{(\ell)},
\eta^c_\ell \bigr] |\eta^{(2\ell)},
\eta^c_{2\ell} \bigr] = E_{\nu_\rho} \bigl[f(\eta) |
\eta^{(2\ell)}, \eta^c_{2\ell} \bigr],
\]
we follow now the same steps as in the proof of Lemma~\ref
{globalone_block} to the last line. To finish the proof, we now give an
order $O(\|f\|^2_{L^5(\nu_\rho)}\ell^{-2})$ bound on the variance
\begin{eqnarray*}
&&\bigl\|E_{\nu_{\rho}}\bigl[f(\eta)|\eta^{(\ell)},\eta^c_\ell
\bigr]-E_{\nu_{\rho
}}\bigl[f(\eta)|\eta^{(2\ell)}, \eta^c_{2\ell}
\bigr]\bigr\|_{L^4(\nu_\rho)}^2.
\end{eqnarray*}
Adding and subtracting terms, and the inequality $(a+b+c)^2 \leq
3a^2+3b^2 +3c^2$, the variance is bounded by
\begin{eqnarray*}
& \leq& 3\biggl \|E_{\nu_{\rho}} \biggl[f(\eta) - \frac{\varphi_f''(\rho
)}{2} \biggl\{\bigl(
\eta^{(\ell)} - \rho\bigr)^2 -\frac{\sigma^2_\ell(\rho)}{2\ell
+1} \biggr\} \Big|
\eta^{(\ell)}, \eta^c_\ell \biggr]
\biggr\|^2_{L^4(\nu_\rho)}
\\
&&{} +3 \biggl\|E_{\nu_{\rho}} \biggl[f(\eta)- \frac{\varphi_f''(\rho
)}{2} \biggl\{\bigl(
\eta^{(2\ell)} - \rho\bigr)^2 -\frac{\sigma^2_{2\ell}(\rho
)}{2(2\ell+1)} \biggr\} \Big|
\eta^{(2\ell)}, \eta^c_{2\ell} \biggr] \biggr\|
^2_{L^4(\nu_\rho)}
\\
&&{} + 3 \biggl\|\frac{\varphi_f''(\rho)}{2} \biggl\{E_{\nu_\rho} \biggl[\bigl(
\eta^{(\ell)} - \rho\bigr)^2 -\frac{\sigma^2_\ell(\rho)}{2\ell+1}\Big |\eta
^{(\ell)}, \eta^c_\ell \biggr]
\\
&&\hspace*{59pt}{} + E_{\nu_\rho} \biggl[\bigl(\eta^{(2\ell)} - \rho
\bigr)^2 -\frac{\sigma^2_{2\ell
}(\rho)}{2(2\ell+1)} \Big|\eta^{(2\ell)},
\eta^c_{2\ell} \biggr] \biggr\} \biggr\|_{L^4(\nu_\rho)}^2.
\end{eqnarray*}
The last term, by the fourth moment bound of $(\eta^{(k)}-\rho)^2$ in
(IM2) with $k=\ell$ and $k=2\ell$ and that $|\varphi_f''(\rho)|\leq
C(\rho)\|f\|_{L^2(\nu_\rho)}$ in (D), is of order $O(\|f\|^2_{L^2(\nu
_\rho)}\ell^{-2})$. But the first two terms are of order $O(\|f\|
^2_{L^5(\nu_\rho)}\ell^{-2+\alpha_0})$ by applying the equivalence
of ensembles assumption (EE).
\end{pf}

\begin{lemma}[(Two-blocks estimate)]
\label{globaltwo-blocks}
Let $f\dvtx \Omega\to\mathbb R$ be a local $L^5(\nu_\rho)$ function
supported on sites in $\Lambda_{\ell_0}$
such that $\varphi_f(\rho)=\varphi'_f(\rho)=0$. Then, there exists a
constant $C = C(\rho,\ell_0)$ such that for $\ell\geq \ell_0$, $t
\geq0$ and
$h\in{\ell^1(\Z)\cap\ell^2(\mathbb{Z})}$:
\begin{eqnarray*}
&&\bb E_{\nu_\rho} \biggl[ \biggl(\int_0^t
\sum_{x\in{\mathbb{Z}}}\tau_x \bigl\{
E_{\nu_{\rho}}\bigl[f(\eta)|\eta^{(\ell_0)}, \eta_{\ell_0}^c
\bigr]-E_{\nu_{\rho
}}\bigl[f(\eta)|\eta^{(\ell)}, \eta_\ell^c
\bigr] \bigr\} h(x) \,ds \biggr)^2 \biggr]
\\
&& \qquad\leq C\|f\|^2_{L^5(\nu_\rho)}t\frac{\ell
}{n^2}\sum
_{x\in{\mathbb{Z}}}h^2(x).
\end{eqnarray*}

On the other hand, when only $\varphi_f(\rho)=0$ is known,
\begin{eqnarray*}
&&\bb E_{\nu_\rho} \biggl[ \biggl(\int_0^t
\sum_{x\in{\mathbb{Z}}}\tau_x \bigl\{
E_{\nu_{\rho}}\bigl[f(\eta)|\eta^{(\ell_0)}, \eta_{\ell_0}^c
\bigr]-E_{\nu_{\rho
}}\bigl[f(\eta)|\eta^{(\ell)}, \eta_\ell^c
\bigr] \bigr\} h(x) \,ds \biggr)^2 \biggr]
\\
&&\qquad \leq C\|f\|^2_{L^5(\nu_\rho)}t\frac{\ell
^2}{n^2}\sum
_{x\in{\mathbb{Z}}}h^2(x).
\end{eqnarray*}
\end{lemma}

\begin{pf} We prove the first display as the second is analogous.
Again, we invoke Proposition~\ref{KV} and bound the square of the
$H_{-1,n}$ norm. To this end,
write $\ell= 2^{m+1}\ell_0 + r$ where $0\leq r\leq2^{m+1}\ell_0 -1$. Then
\begin{eqnarray*}
&&E_{\nu_{\rho}}\bigl[f(\eta)|\eta^{(\ell_0)}, \eta_{\ell_0}^c
\bigr]-E_{\nu_{\rho
}}\bigl[f(\eta)|\eta^{(\ell)}, \eta_\ell^c
\bigr]
\\
&&\qquad = E_{\nu_\rho}\bigl[f(\eta)|\eta^{(2^{m+1}\ell_0)}, \eta
_{2^{m+1}\ell_0}^c\bigr] - E_{\nu_\rho}\bigl[f(\eta)|
\eta^{(\ell)}, \eta_\ell^c\bigr]
\\
&& \qquad\quad{}+ \sum_{i=0}^{m} \bigl
\{E_{\nu_{\rho}}\bigl[f(\eta)|\eta ^{(2^i \ell_0)}, \eta_{2^i\ell_0}^c
\bigr]-E_{\nu_{\rho}}\bigl[f(\eta)|\eta ^{(2^{i+1}\ell_0)}, \eta_{2^{i+1}\ell_0}^c
\bigr] \bigr\}.
\end{eqnarray*}

Now, by Minkowski's inequality, with respect to the $H_{-1,n}$ norm,
over the $m+2$ terms, and Lemma~\ref{globalrenormalization}, we obtain
that the left-hand side of the display in the lemma statement is
bounded by
\begin{eqnarray*}
&& \Biggl\{ \biggl(\frac{Ct 2^{m+1}\ell_0}{n^2} \biggr)^{1/2} + \sum
_{i=0}^m \biggl(\frac{Ct 2^{i}\ell_0}{n^2}
\biggr)^{1/2} \Biggr\}^2 \|f\|^2_{L^5(\nu
_\rho)}
\sum_{x\in\Z} h^2(x)
\\
&&\qquad \leq \frac{C\|f\|^2_{L^5(\nu_\rho)} t \ell
}{n^2}\sum_{x\in\Z}
h^2(x)
\end{eqnarray*}
to finish the proof.
\end{pf}

\begin{lemma}[(Equivalence of ensembles estimate)]
\label{EE_1block}
Let $f\dvtx \Omega\to\mathbb R$ be a local $L^5(\nu_\rho)$ function
supported on sites in $\Lambda_{\ell_0}$ such that $\varphi_f(\rho
)=\varphi'_f(\rho)=0$. Then, there exists a constant $C = C(\rho,\ell_0)$
such that for $\ell\geq \ell_0$, $t \geq0$ and $h\in{\ell^1(\mathbb{Z})}$:
\begin{eqnarray*}
&&\bb E_{\nu_\rho} \biggl[ \biggl(\int_0^t
\sum_{x\in{\mathbb{Z}}}\tau_x \biggl\{
E_{\nu_{\rho}}\bigl[f(\eta_s)|\eta_s^{(\ell)},
(\eta_s)_\ell^c\bigr]
\\
&&\hspace*{66pt}\quad -\frac{\varphi_f''(\rho)}{2} \biggl(\bigl(\eta_s^{(\ell)} - \rho
\bigr)^2 - \frac
{\sigma^2_\ell(\rho)}{2\ell+1} \biggr) \biggr\} h(x) \,ds
\biggr)^2 \biggr]
\\
&&\qquad \leq C\|f\|^2_{L^5(\nu_\rho)}t^2\frac{n^2}{\ell
^{2+\alpha_0}}
\biggl(\frac{1}{n}\sum_{x\in\Z}\bigl|h(x)\bigr|
\biggr)^2.
\end{eqnarray*}

On the other hand, when only $\varphi_f(\rho)=0$ is known,
\begin{eqnarray*}
&&\bb E_{\nu_\rho} \biggl[ \biggl(\int_0^t
\sum_{x\in{\mathbb{Z}}}\tau_x \bigl\{
E_{\nu_{\rho}}\bigl[f(\eta_s)|\eta_s^{(\ell)},
(\eta_s)_\ell^c\bigr] -\varphi_f'(
\rho) \bigl(\eta_s^{(\ell)}-\rho \bigr) \bigr\}h(x)\,ds
\biggr)^2 \biggr]
\\
&&\qquad \leq C\|f\|^2_{L^5(\nu_\rho)}t^2\frac{n^2}{\ell
^{1+\alpha_0}}
\biggl(\frac{1}{n}\sum_{x\in\Z}\bigl|h(x)\bigr|
\biggr)^2.
\end{eqnarray*}
Here, $\alpha_0>0$ is the power mentioned in assumption \textup{(EE)}.
\end{lemma}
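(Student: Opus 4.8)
The plan is to bypass the $H_{-1,n}$ estimate of Proposition \ref{KV} altogether: unlike the one-block, renormalization and two-blocks steps, here a crude bound in the time variable combined with the static estimate (EE) already suffices, at the cost of producing $t^2$ rather than $t$.

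For the first display, set $F_\ell(\eta) := E_{\nu_\rho}[f(\eta)\,|\,\eta^{(\ell)},\eta^c_\ell] - \tfrac{\varphi''_f(\rho)}{2}\big((\eta^{(\ell)}-\rho)^2 - \tfrac{\sigma^2_\ell(\rho)}{2\ell+1}\big)$, and for the second display set $F_\ell(\eta) := E_{\nu_\rho}[f(\eta)\,|\,\eta^{(\ell)},\eta^c_\ell] - \varphi'_f(\rho)(\eta^{(\ell)}-\rho)$, so that in both cases the integrand at time $s$ is $G(\eta_s)$ with $G(\eta) := \sum_{x\in\Z} h(x)\,\tau_x F_\ell(\eta)$. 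First I would apply the Cauchy--Schwarz inequality in time, $\big(\int_0^t G(\eta_s)\,ds\big)^2 \le t\int_0^t G(\eta_s)^2\,ds$, take expectations, and use invariance of $\nu_\rho$ to obtain $\E_{\nu_\rho}\big[\big(\int_0^t G(\eta_s)\,ds\big)^2\big] \le t^2\, E_{\nu_\rho}[G^2]$. Next, by the triangle inequality in $L^2(\nu_\rho)$ and translation invariance of $\nu_\rho$, $\|G\|_{L^2(\nu_\rho)} \le \sum_{x} |h(x)|\,\|\tau_x F_\ell\|_{L^2(\nu_\rho)} = \|F_\ell\|_{L^2(\nu_\rho)}\sum_{x}|h(x)|$; the sum is finite since $h\in\ell^1(\Z)$ and, by $f\in L^5(\nu_\rho)\subset L^2(\nu_\rho)$ together with the moment bound on $\eta^{(\ell)}$ in (IM), $\|F_\ell\|_{L^2(\nu_\rho)}<\infty$.

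It remains to insert the bound on $\|F_\ell\|_{L^2(\nu_\rho)}\le\|F_\ell\|_{L^4(\nu_\rho)}$. Assumption (EE), in its first form when $\varphi_f(\rho)=\varphi'_f(\rho)=0$ and in its second form when only $\varphi_f(\rho)=0$, gives exactly $\|F_\ell\|_{L^4(\nu_\rho)}\le C\|f\|_{L^5(\nu_\rho)}\,\ell^{-1-\alpha_0/2}$, respectively $\|F_\ell\|_{L^4(\nu_\rho)}\le C\|f\|_{L^5(\nu_\rho)}\,\ell^{-1/2-\alpha_0/2}$. Squaring and using $\big(\sum_x|h(x)|\big)^2 = n^2\big(\tfrac1n\sum_x|h(x)|\big)^2$ then yields the two claimed inequalities. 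There is essentially no obstacle in this lemma: the whole arithmetic is a one-line application of (EE), and the only care needed is to verify that all quantities lie in $L^2(\nu_\rho)$ so that the Cauchy--Schwarz and triangle-inequality manipulations are legitimate.
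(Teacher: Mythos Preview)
Your proof is correct and follows essentially the same route as the paper: bound the time integral by Cauchy--Schwarz/invariance to produce the $t^2$ factor, reduce to a static $L^2(\nu_\rho)$ bound on the spatial sum, and then invoke (EE) together with translation invariance. The only cosmetic difference is that the paper uses the weighted Schwarz inequality $\big(\sum_x |h(x)|\,r(x)\big)^2 \le \big(\sum_x |h(x)|\big)\sum_x |h(x)|\,r(x)^2$ where you use the triangle inequality in $L^2(\nu_\rho)$; both yield $(\sum_x|h(x)|)^2\,\|F_\ell\|_{L^2(\nu_\rho)}^2$.
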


\begin{pf} By squaring and using invariance of $\nu_\rho$, the
left-hand side of the display is bounded by
\[
2t^2\E_{\nu_\rho} \biggl[ \biggl(\sum
_{x\in\Z}\bigl |h(x)\bigr|\bigl|r(x)\bigr| \biggr)^2 \biggr],
\]
where $r(x)$ is the $\tau_x$-shifted expression in curly braces in the
display of Lemma~\ref{EE_1block}.
Now, by Schwarz inequality,
\[
\biggl(\sum_{x\in\Z} \bigl|h(x)\bigr|r(x) \biggr)^2
\leq \biggl(\sum_{x\in\Z} \bigl|h(x)\bigr| \biggr)\sum
_{x\in\Z} \bigl|h(x)\bigr|r^2(x).
\]
Since $\nu_\rho$ is translation-invariant, the desired bound is now
obtained by noting the form of $r(x)$ and the equivalence of ensembles
assumption (EE).
\end{pf}

\begin{pf*}{Proof of Theorem~\ref{gbg_L2}} By combining Lemma~\ref
{globalone_block} with $\ell= \ell_0$, and Lemmas~\ref
{globaltwo-blocks} and~\ref{EE_1block}, we straightforwardly
obtain the result.
\end{pf*}

\section{Equivalence of ensembles}\label{EE_section}

We prove, as a consequence of Proposition~\ref{2EE}, that condition
(EE) holds for a large class of systems with product invariant
measures. In this case, $\nu_{k,\ell, \xi}$ does not depend on $\xi$,
which simplifies the conditional expectation in the statement of (EE).

Next, we show in Proposition~\ref{Markov_EE} that (EE) also holds for
the Markov chain measure $\nu_{1/2}$ defined in Section~\ref{speed_change_model}. Some parts of the proofs of these statements are
similar to those in \cite{SX}.

Define $\Lambda^+_{m} = \{x\dvtx 1\leq x\leq m\}$.

%
\begin{proposition}\label{2EE}
Let $\nu_\rho$ be a product measure on $\Omega$ such that \textup{(IM)} holds,
and $0<\nu_\rho(\eta(0)=j)<1$ for $j=0,1$. Let also $f$ be a local
$L^5(\nu_\rho)$ function, supported on sites $\Lambda^+_{\ell_0}$, such
that $\varphi_f(\rho)= \varphi'_f(\rho)=0$. Then
there exists a constant $C= C(\rho,\ell_0)$, such that for $n\geq\ell
_0$ we have
\[
\biggl\| E_{\nu_\rho}\bigl[f(\eta)| y\bigr] - \biggl\{y^2-
\frac{\sigma^2(\rho
)}{n} \biggr\} \frac{\varphi_f''(\rho)}{2} \biggr\|_{L^4(\nu_\rho)} \leq
\frac{C\|f\|_{L^5(\nu_\rho)}}{n^{3/2}}.
\]

On the other hand, when only $\varphi_f(\rho)=0$ is known,
\[
\bigl\| E_{\nu_\rho}\bigl[f(\eta)| y\bigr] - y\varphi_f'(
\rho) \bigr\|_{L^4(\nu_\rho)} \leq \frac{C\|f\|_{L^5(\nu_\rho)}}{n}.
\]
Here, $y:= \frac{1}{n}\sum_{x\in\Lambda^+_n} \eta(x)-\rho$.
\end{proposition}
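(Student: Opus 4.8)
The plan is to reduce to a sharp local central limit theorem for the single variable $k:=\sum_{x\in\Lambda^+_n}\eta(x)$, prove a pointwise expansion valid on a moderate window of $k$, and then assemble the $L^4(\nu_\rho)$ bound using moment and large‑deviation estimates for $y$. Write $\psi(k):=E_{\nu_\rho}[f\,|\,k]$, which by the product structure is a function of $k$ alone, $z:=k/n$, $y:=z-\rho$, $\chi(z):=\mathrm{Var}_{\nu_z}(\eta(0))$, and let $\ell_0$ be the (fixed) size of the support of $f$. Fix $\delta_0>0$ so that $[\rho-\delta_0,\rho+\delta_0]$ is compactly contained in $(\rho_*,\rho^*)$. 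For product $\nu_\rho$ the maps $z\mapsto\varphi_f(z),\varphi_f'(z),\varphi_f''(z),\chi(z),\lambda(z)$ are real‑analytic there, and from \eqref{varphi_derivatives}, the bound $d\nu_z/d\nu_\rho\le C(\delta_0)$ on $\Lambda^+_{\ell_0}$ for $|z-\rho|\le\delta_0$, and the exponential moments in (IM), one gets $|\varphi_f^{(i)}(z)|\le C\|f\|_{L^2(\nu_\rho)}\le C\|f\|_{L^5(\nu_\rho)}$ for $i\le 3$ uniformly on that interval; also $\sigma^2(\rho)=\chi(\rho)$, and by conditional Jensen $\|\psi(k)\|_{L^p(\nu_\rho)}\le\|f\|_{L^p(\nu_\rho)}$.

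The key step is the expansion: uniformly over $|y|\le\delta_0$,
\begin{equation*}
\psi(k)\ =\ \varphi_f(z)\ -\ \frac{\chi(z)\,\varphi_f''(z)}{2n}\ +\ O\!\Big(\frac{\|f\|_{L^5(\nu_\rho)}}{n^{3/2}}\Big).
\end{equation*}
To prove it I would use that the conditional law of $(\eta(1),\dots,\eta(n))$ given $k$ is the same under $\nu_\rho$ and under every $\nu_z$ (the tilt being a function of the sum), so with $S_m:=\sum_{x=1}^m\eta(x)$, $c^z_j:=E_{\nu_z}[f\,1(S_{\ell_0}=j)]$ and $p^z_m$ the law of $S_m$ under $\nu_z$,
\begin{equation*}
\psi(k)\ =\ E_{\nu_z}[f\,|\,S_n=k]\ =\ \sum_{j}c^z_j\,\frac{p^z_{n-\ell_0}(k-j)}{p^z_n(k)}\,,\qquad z=k/n.
\end{equation*}
Under $\nu_z$, $S_n$ has mean exactly $k$ and $S_{n-\ell_0}$ has mean $k-\ell_0 z$, so in both cases the evaluation point lies within $O(1)$ of the mean, and an ordinary lattice Edgeworth expansion, uniform over $z$ in the compact neighbourhood, gives
\begin{equation*}
\frac{p^z_{n-\ell_0}(k-j)}{p^z_n(k)}\ =\ 1+\frac{\ell_0}{2n}-\frac{(\ell_0 z-j)^2}{2\chi(z)n}-\frac{\mu_3(z)(\ell_0 z-j)}{2\chi(z)^2 n}+O\!\big(n^{-3/2}\big),
\end{equation*}
where $\mu_3(z)$ is the third cumulant of $\eta(0)$ under $\nu_z$ and the remainder grows at most polynomially in $|\ell_0 z-j|$; the range $j\ge n^{1/4}$ is discarded using $p^z_{n-\ell_0}(k-j)\le Cn^{-1/2}\le C'p^z_n(k)$ together with $\sum_{j\ge n^{1/4}}|c^z_j|\le\|f\|_{L^2(\nu_z)}\,\P_{\nu_z}(S_{\ell_0}\ge n^{1/4})^{1/2}$, which is super‑exponentially small. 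Summing against $c^z_j$ and using the product‑measure identities (read off from \eqref{varphi_derivatives}) $\sum_j c^z_j=\varphi_f(z)$, $\sum_j c^z_j(\ell_0 z-j)=-\chi(z)\varphi_f'(z)$ and $\sum_j c^z_j(\ell_0 z-j)^2=\chi(z)^2\varphi_f''(z)+\ell_0\chi(z)\varphi_f(z)-\chi(z)^3\lambda''(z)\varphi_f'(z)$, the $\pm\tfrac{\ell_0}{2n}\varphi_f(z)$ terms cancel and the coefficient of $\varphi_f'(z)/n$ equals $\tfrac12[\chi(z)^2\lambda''(z)+\mu_3(z)/\chi(z)]$, which is identically zero since $\lambda'(z)=1/\chi(z)$ forces $\chi(z)^2\lambda''(z)=-\chi'(z)$ while $\mu_3(z)=\chi'(z)\chi(z)$; this leaves the displayed expansion. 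This step is the main obstacle: it needs a lattice Edgeworth expansion uniform over the one‑parameter family $\{\nu_z:|z-\rho|\le\delta_0\}$ with a genuine $O(n^{-3/2})$ remainder and polynomial control in the deviation variable, plus the bookkeeping that turns the $O(1/n)$ terms into exactly $-\chi(z)\varphi_f''(z)/(2n)$ through the product forms of \eqref{varphi_derivatives}.

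With the expansion in hand I would Taylor‑expand about $\rho$. Using $\varphi_f(\rho)=\varphi_f'(\rho)=0$ and the uniform bound on $\varphi_f'''$, $\varphi_f(z)=\tfrac12\varphi_f''(\rho)y^2+O(\|f\|_{L^5(\nu_\rho)}|y|^3)$ and $\chi(z)\varphi_f''(z)=\chi(\rho)\varphi_f''(\rho)+O(\|f\|_{L^5(\nu_\rho)}|y|)$, so that on $\{|y|\le\delta_0\}$, using $\sigma^2(\rho)=\chi(\rho)$,
\begin{equation*}
\Big|\psi(k)-\frac{\varphi_f''(\rho)}{2}\Big(y^2-\frac{\sigma^2(\rho)}{n}\Big)\Big|\ \le\ C\|f\|_{L^5(\nu_\rho)}\big(n^{-3/2}+|y|^3+n^{-1}|y|\big).
\end{equation*}
Taking $L^4(\nu_\rho)$‑norms and using $\|y\|_{L^{2p}(\nu_\rho)}\le C_p n^{-1/2}$ for all $p$ (central moments of an i.i.d. empirical mean with exponential moments, (IM)), each term contributes $O(\|f\|_{L^5(\nu_\rho)}n^{-3/2})$. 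On $\{|y|>\delta_0\}$ I would apply Hölder with exponents $5$ and $5/4$, the Jensen bound $\|\psi(k)\|_{L^5(\nu_\rho)}\le\|f\|_{L^5(\nu_\rho)}$ and $\|\tfrac{\varphi_f''(\rho)}{2}(y^2-\sigma^2(\rho)/n)\|_{L^5(\nu_\rho)}\le C\|f\|_{L^5(\nu_\rho)}$, together with the Cramér bound $\P_{\nu_\rho}(|y|>\delta_0)\le 2e^{-cn}$, so this contribution is exponentially small; combining the two regions gives the first inequality (for small $n$ it is trivial, the left side being $\le C\|f\|_{L^5(\nu_\rho)}$). The second inequality, assuming only $\varphi_f(\rho)=0$, follows identically by stopping one order earlier: on $\{|y|\le\delta_0\}$, $\psi(k)-\varphi_f'(\rho)y=[\varphi_f(z)-\varphi_f'(\rho)y]-\chi(z)\varphi_f''(z)/(2n)+O(\|f\|_{L^5(\nu_\rho)}n^{-3/2})=O(\|f\|_{L^5(\nu_\rho)}(y^2+n^{-1}))$, whose $L^4$‑norm is $O(\|f\|_{L^5(\nu_\rho)}n^{-1})$, and the complementary region is again exponentially small.
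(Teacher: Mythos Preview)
Your proposal is correct and follows essentially the same route as the paper: tilt to the measure at the empirical density, expand the conditional expectation via a local central limit theorem for the i.i.d.\ sum evaluated near its mean, Taylor-expand in $y$, and dispatch $\{|y|>\delta_0\}$ by large deviations with H\"older. The only organizational difference is that the paper expands $\theta_m(x)$ directly in $x$ via characteristic functions to obtain coefficients $\kappa_0,\kappa_1,\kappa_2$ and then Taylor-expands each $E_{\nu_{y+\rho}}[\,\cdot\,]$ separately, whereas you use an Edgeworth expansion and the exponential-family identity $\mu_3(z)/\chi(z)=-\chi(z)^2\lambda''(z)$ to kill the $\varphi_f'(z)/n$ term in one stroke, arriving at $\psi(k)=\varphi_f(z)-\chi(z)\varphi_f''(z)/(2n)+O(n^{-3/2})$ before expanding in $y$; the two computations are equivalent.
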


\begin{pf} We prove the first display as the second statement,
following the same scheme, has a simpler argument. At the expense of
the constant, we need only to consider all large $n>\ell_0$. To
simplify notation, we will call $\ell=\ell_0$. The proof follows in
several steps.

\textit{Step} 1.
Recall the tilted measures $\{\nu_z\dvtx \rho_*<z<\rho^*\}$ given after
assumption (D1) which are well defined as $\nu_\rho$ is a product
measure. Let $\sigma^2(z) = E_{\nu_z}[(\eta(0)-z)^2]$.
Note also the canonical expectation $E_{\nu_z}[f|y]$ does not depend
on $z$, and that we are free to choose it as desired.

Develop
\begin{eqnarray*}
E_{\nu_{\rho}}\bigl[f(\eta)|y\bigr] &=& E_{\nu_{y+\rho}} \biggl[ f(\eta) \Big|
\frac
{1}{n} \sum_{x\in\Lambda^+_n}\eta(x)-\rho= y \biggr]
\\
&=& \frac{E_{\nu_{y+\rho}} [f(\eta)1(({1}/{n})\sum_{x\in\Lambda
^+_n} \eta(x)-\rho= y) ]}{\nu_{y+\rho} (({1}/{n})\sum_{x\in
\Lambda^+_n}\eta(x)-\rho= y )}.
\end{eqnarray*}

Define $\theta_m(z) = \sqrt{m}\nu_{y+\rho}(\sum_{x\in\Lambda^+_m}\eta
(x) - \rho-y=z)$, and write
the last expression as
\[
E_{\nu_{y+\rho}} \biggl[ f(\eta) \frac{\sqrt{n}\theta_{n-\ell}(-\sum_{x\in\Lambda^+_\ell} (\eta(x)-y-\rho))}{\sqrt{n-\ell}\theta
_n(0)} \biggr].
\]
The goal will be now to expand $\theta_{n-\ell}(z)$ to recover the main
terms approximating $E_{\nu_\rho}[f|y]$
when $|y|$ is small. We will treat the case when $|y|$ is bounded away
from~$0$ afterward.

\textit{Step} 2. To expand $\theta_m(z)$,
let $\psi_y(t)= E_{\nu_{y+\rho}}[e^{it(\eta(x)-\rho-y)}]$ be the
characteristic function. Then one can write
\begin{eqnarray*}
\theta_m(x) &=& \frac{\sqrt{m}}{2\pi} \int_{-\pi}^\pi
e^{-itx}\psi_y^m(t)\,dt
\\
&=& \frac{1}{2\pi}\int
_{-\pi\sqrt{m}}^{\pi\sqrt{m}} e^{-itx/\sqrt
{m}}\psi^m_y(t/
\sqrt{m})\,dt.
\end{eqnarray*}

By Taylor expansion,
%
\begin{eqnarray}
\label{Taylor} 2\pi\theta_m(x) &=& \int_{-\pi\sqrt{m}}^{\pi\sqrt{m}}
\psi _y^m(t/\sqrt{m}) \,dt - \int_{-\pi\sqrt{m}}^{\pi\sqrt{m}}
\frac
{ixt}{\sqrt{m}} \psi^m_y(t/\sqrt{m}) \,dt
\nonumber
\\
&&{} - \frac{1}{2}\int_{-\pi\sqrt{m}}^{\pi\sqrt{m}}
\frac{x^2
t^2}{m} \psi_y^m(t/\sqrt{m}) \,dt \\
&&{}+ O \biggl(
\frac{|x|^3}{m^{3/2}} \biggr) \int_{-\pi\sqrt{m}}^{\pi\sqrt{m}}
|t|^3 \bigl|\psi_y^m(t/\sqrt{m})\bigr|\,dt.\nonumber
\end{eqnarray}

\textit{Step} 3.
Let $\delta>0$ be such that $(\rho-\delta, \rho+\delta)\subset(\rho_*,
\rho^*)$ and sufficiently small in the following estimates. Let also
$0<\varepsilon\leq\pi$.

First,
$\sup_{|y|\leq\delta, \varepsilon\leq|t|\leq\pi}|\psi_y^m(t)|<
C_0^m$ where $C_0<1$: Write
\begin{eqnarray*}
\bigl|\psi_y(t)\bigr| &\leq &\bigl|\nu_{y+\rho}\bigl(\eta(0)=0\bigr) +
e^{it}\nu_{y+\rho
}\bigl(\eta(0)=1\bigr)\bigr| + \sum
_{k\geq2}\nu_{y+\rho}\bigl(\eta(0)=k\bigr)
\\
&\leq& \bigl(A^2 -2\nu_{y+\rho}\bigl(\eta(0)=0\bigr)
\nu_{y+\rho}\bigl(\eta(0)=1\bigr)\bigl[1-\cos (t)\bigr]
\bigr)^{1/2} + 1-A,
\end{eqnarray*}
where $A = \nu_{y+\rho}(\eta(0)=0) + \nu_{y+\rho}(\eta(0)=1)$.
By the proposition assumptions and continuity of $\nu_{y+\rho}(\eta
(0)=k)$ in $y$, $0<\nu_{y+\rho}(\eta(0)=j)<1$ for $j=0,1$ uniformly for
$|y|\leq\delta$. Hence,
uniformly over $\varepsilon\leq|t|\leq\pi$, $|y|\leq\delta$, the
right-hand side of the display above is strictly bounded by a constant $C_0<1$.

Second, for $0\leq|t/\sqrt{m}|<\varepsilon$ and $|y|\leq\delta$,
\[
\psi_y^m(t/\sqrt{m}) = \bigl[1-\bigl(t^2
\sigma^2(y+\rho)/(2m) \bigr) + O \bigl(C(\delta)|t|^3m^{-3/2}
\bigr) \bigr]^m
\]
so that
$|\psi_y^m(t/\sqrt{m})|\leq e^{-C_1(y,\varepsilon)t^2}$. Here, by
continuity in $y$ and $\sigma^2(\rho)>0$, when $\varepsilon$ is small,
$\inf_{|y|\leq\delta}C_1(y,\varepsilon)>0$. Similarly, we note $\sup_{|y|\leq\delta}\sigma^2(y+\rho)<\infty$, and $\inf_{|y|\leq\delta
}\sigma^2(y+\rho)>0$.

Last, by the classical local limit theorem, $\lim_{m\uparrow\infty
}\theta_m(0) = (2\pi\sigma^2(y+\rho))^{-1/2}$.

\textit{Step} 4.
We now observe, for $|y|\leq\delta$ and $m\geq1$, as a consequence of
the estimates in step 3, the integral in the last term in \eqref
{Taylor} is uniformly bounded: Split the integral over the ranges
$|t/\sqrt{m}|<\varepsilon$ and $|t/\sqrt{m}|\geq\varepsilon$ and bound
each part separately.

Also, similarly, we split the second integral in \eqref{Taylor}, when
$|y|\leq\delta$, over ranges $|t/\sqrt{m}|\geq\varepsilon$ and
$|t/\sqrt{m}|< \varepsilon$. On the first range, the restricted
integral exponentially decays, and on the range $|t/\sqrt {m}|<\varepsilon$, the restricted integral is almost the integral of an
odd function since here
\[
\psi_y^m(t/\sqrt{m}) = \biggl(1-\frac{t^2\sigma^2(y+\rho)}{2m}
\biggr)^m \bigl[1+ O\bigl(C(\delta)|t|^3m^{-1/2}
\bigr) \bigr].
\]
Therefore, we conclude that the second integral in \eqref{Taylor} is of
order $O(m^{-1/2})$.

\textit{Step} 5.
Then, for $|y|\leq\delta$, we have
\begin{eqnarray*}
E_{\nu_{\rho}}\bigl[f(\eta)|y\bigr] &=& \kappa_0
E_{\nu_{y+\rho}}\bigl[f(\eta)\bigr] + \frac
{\kappa_1}{\sqrt{n-\ell}}E_{\nu_{y+\rho}}
\biggl[ f(\eta) \biggl(\sum_{x\in
\Lambda^+_\ell}\eta(x)-\rho-y
\biggr) \biggr]
\\
&&{} + \frac{\kappa_2}{n-\ell}E_{\nu_{y+\rho}} \biggl[ f(\eta) \biggl(\sum
_{x\in\Lambda^+_\ell}\eta(x)-\rho-y \biggr)^2 \biggr] +
\varepsilon_f(n),
\end{eqnarray*}
where $|\varepsilon_f(n)| \leq C(\rho,\ell,\delta)\|f\|_{L^2(\nu_\rho
)}n^{-3/2}$ and $\kappa_i=\kappa_i(n)$ for $i=0,1,2$ are explicit
expressions. Indeed, one observes
\begin{eqnarray*}
\kappa_0(n) &=& \frac{\sqrt{n}}{\sqrt{n-\ell}}\frac{\theta_{n-\ell
}(0)}{\theta_n(0)} = 1 + O
\bigl(n^{-1/2}\bigr),
\\
\kappa_1(n) &=& \frac{\sqrt{n}}{\theta_n(0)\sqrt{n-\ell}}\frac{1}{2\pi
}\int
_{-\pi\sqrt{n-\ell}}^{\pi\sqrt{n-\ell}} it\psi_y^{n-\ell}
\biggl(\frac
{t}{\sqrt{n-\ell}} \biggr)\,dt = O\bigl(n^{-1/2}\bigr),
\\
\kappa_2(n) &=& \frac{-\sqrt{n}}{2\theta_n(0)\sqrt{n-\ell}}\frac{1}{2\pi
}\int
_{-\pi\sqrt{n-\ell}}^{\pi\sqrt{n-\ell}} t^2\psi_y^{n-\ell}
\biggl(\frac{t}{\sqrt{n-\ell}} \biggr)\,dt
\\
& = &\frac{-1}{2\sigma^{2}(y+\rho)} + O\bigl(n^{-1/2}\bigr).
\end{eqnarray*}

\textit{Step} 6. We now develop expansions of $E_{\nu_{y+\rho}}[h]$ for a
local $L^2(\nu_\rho)$ function $h$ supported on coordinates in $\Lambda
^+_\ell$.
The ``tilting'' given in the \hyperref[intro_section]{Introduction}, \eqref{tilted_measure}
reduces to
\[
E_{\nu_{y+\rho}}[h] = E_{\nu_\rho} \biggl[ h(\eta) \frac{e^{\lambda(y+\rho)\sum_{x\in
\Lambda^+_\ell}(\eta(x)-\rho)}}{M^\ell(\lambda(y+\rho))}
\biggr],
\]
where $\lambda(y+\rho)$ is the ``tilt'' chosen to change the density to
$y+\rho$ and $M(\lambda) = E_{\nu_\rho}[e^{\lambda(\eta(x)-\rho)}]$.
Note that
$z-\rho= M'(\lambda(z))/M(\lambda(z))$ and
\[
\lambda'(z) = \biggl[\frac{M''(\lambda(z))}{M(\lambda(z))} - \biggl(
\frac{M'(\lambda
(z))}{M(\lambda(z))} \biggr)^2 \biggr]^{-1} =
\frac{1}{\sigma^{2}(z)}.
\]

Consider the first and second derivatives of $E_{\nu_{y+\rho}}[h]$
given exactly in \eqref{varphi_derivatives} as $\nu_{y+\rho}$ is a
product measure. The third derivative takes the form
\begin{eqnarray*}
\frac{d^3}{dy^3} E_{\nu_{y+\rho}}\bigl[h(\eta)\bigr]& =&
\lambda'''(y+\rho)E_{\nu_{y+\rho}}
\biggl[\bar h(\eta) \biggl(\sum_{x\in\Lambda^+_\ell}\eta(x)-y-\rho
\biggr) \biggr]
\\
&&{} +3\lambda'(y+\rho)\lambda''(y+
\rho)E_{\nu_{y+\rho}} \biggl[\bar h(\eta ) \biggl(\sum
_{x\in\Lambda^+_\ell}\eta(x)-y-\rho \biggr)^2 \biggr]
\\
&& {}+\bigl(\lambda'(y+\rho)\bigr)^3E_{\nu_{y+\rho}}
\biggl[\bar h(\eta) \biggl(\sum_{x\in\Lambda^+_\ell}\eta(x)-y-\rho
\biggr)^3 \biggr]
\\
&&{} -3\bigl(\lambda'(y+\rho)\bigr)^3E_{\nu_{y+\rho}}
\biggl[\bar h(\eta) \biggl(\sum_{x\in\Lambda^+_\ell}\eta(x)-y-\rho
\biggr) \biggr]
\\
&&\quad{} \times E_{\nu
_{y+\rho}} \biggl[ \biggl(\sum_{x\in\Lambda^+_\ell}
\eta(x)-y-\rho \biggr)^2 \biggr],
\end{eqnarray*}
where $\bar h(\eta) = h(\eta) - E_{\nu_{y+\rho}}[h]$.

Then, for $|y|\leq\delta$, when $\varphi_h(\rho)=\varphi_h'(\rho)=0$,
we may expand around $y=0$:
\[
E_{\nu_{y+\rho}}\bigl[h(\eta)\bigr] = \bigl(\lambda'(\rho)
\bigr)^2\frac{y^2}{2}E_{\nu_\rho}\biggl[h(\eta) \biggl(\sum
_{x\in
\Lambda^+_\ell}\eta(x)-\rho \biggr)^2 \biggr] +
|y|^3r(\rho,\delta,h).
\]
When only $\varphi_h(\rho)=0$ is known,
\begin{eqnarray*}
E_{\nu_{y+\rho}}\bigl[h(\eta)\bigr]&=& \lambda'(
\rho)yE_{\nu_\rho} \biggl[h(\eta ) \biggl(\sum_{x\in
\Lambda^+_\ell}
\eta(x)-\rho \biggr) \biggr] + |y|^2r(\rho,\delta,h).
\end{eqnarray*}
When possibly $\varphi_h(\rho)\neq0$,
\[
E_{\nu_{y+\rho}}\bigl[h(\eta)\bigr]= E_{\nu_\rho}\bigl[h(\eta)\bigr] +
|y|r(\rho, \delta, h).
\]
Here, as the first and second derivatives in \eqref{varphi_derivatives}
and the third derivative above are bounded for $|y|\leq\delta$, we may
conclude that the remainders $|r(\rho,\delta,h)| \leq C(\rho, \delta)\|
h\|_{L^2(\nu_\rho)}$.

We now relate the terms $E_{\nu_\rho} [h(\eta) (\sum_{x\in\Lambda
_\ell^+} (\eta(x)-\rho) )^k ]$ to derivatives $\varphi
^{(k)}_h(\rho)$:
From \eqref{varphi_derivatives}, for $k=1,2$, when $\varphi
_h^{(k-1)}(\rho) = \varphi_h(\rho) = 0$, we have
%
\begin{equation}
\label{step6_line} \varphi_h^{(k)}(\rho) = \bigl(
\lambda'(\rho)\bigr)^kE_{\nu_\rho} \biggl[h(\eta )
\biggl(\sum_{x\in\Lambda^+_\ell}\bigl(\eta(x)-\rho\bigr)
\biggr)^k \biggr].
\end{equation}

\textit{Step} 7.
Consider the expansion of $E_{\nu_\rho}[f|y]$ in step 5 when $|y|\leq
\delta$. With $h$ equal to variously $f$, $f(\eta) (\sum_{x\in\Lambda
_\ell^+}(\eta(x)-\rho) )$, and $f(\eta) (\sum_{x\in\Lambda_\ell
^+}(\eta(x)-\rho) )^2$, we may write
\begin{eqnarray*}
E_{\nu_\rho}[f|y] & = & \frac{\kappa_0}{2}\bigl(\lambda'(
\rho)\bigr)^2y^2E_{\nu
_\rho} \biggl[f(\eta) \biggl(
\sum_{\Lambda_\ell^+}\bigl(\eta(x)-\rho\bigr)
\biggr)^2 \biggr] + \kappa_0|y|^3r(f)
\\
&&{} + \frac{\kappa_1\lambda'(\rho)y}{\sqrt{n-\ell}}E_{\nu_\rho} \biggl[f(\eta) \biggl(\sum
_{\Lambda_\ell^+}\bigl(\eta(x)-\rho\bigr) \biggr)^2 \biggr] +
\frac
{\kappa_1}{\sqrt{n-\ell}}|y|^2r(f)
\\
&&{} + \frac{\kappa_2}{n-\ell}E_{\nu_\rho} \biggl[f(\eta) \biggl(\sum
_{\Lambda
^+_\ell}\bigl(\eta(x)-\rho\bigr) \biggr)^2 \biggr] +
\frac{\kappa_2}{n-\ell}|y|r(f) + \varepsilon_f(n),
\end{eqnarray*}
where $|r(f)|\leq C(\rho, \ell, \delta)\|f\|^2_{L^2(\nu_\rho)}$.

Hence, noting the assumptions on $\varphi_f(\rho)$, \eqref{step6_line}, and
$E_{\nu_\rho}[y^{2p}] = O(n^{-p})$ so that each $y$ factor is
$O(n^{-1/2})$, we can group the dominant terms so that
\begin{eqnarray*}
&& E_{\nu_\rho} \biggl[ 1\bigl(|y|\leq\delta\bigr)
\\
&&\hspace*{12pt}\quad \times \biggl(E_{\nu_{\rho}}\bigl[f(\eta)|y\bigr] - \biggl\{
\frac{\kappa_0
y^2}{2} + \frac{1}{\lambda'(\rho)}\frac{\kappa_1 y}{\sqrt{n}} + \frac
{1}{(\lambda'(\rho))^2}
\frac{\kappa_2}{n} \biggr\}\varphi_f''(
\rho) \biggr)^4 \biggr]
\\
&&\qquad \leq C(\rho, \delta )\|f\|^4_{L^2(\nu_\rho)}n^{-6}.
\end{eqnarray*}
Noting $\kappa_0(n) = 1 + O(n^{-1/2}), \kappa_1(n)= O(n^{-1/2})$,
formula $\lambda'(\rho) = \sigma^{-2}(\rho)$ in step~6, $|\varphi''(\rho
)|\leq C\|f\|_{L^2(\nu_\rho)}$ and, by Taylor expansion of $\sigma
^2(y+\rho)$ around $y=0$, $\kappa_2(n) = -2^{-1}\sigma^{-2}(\rho) +
O(n^{-1/2})$, we have further
\[
E_{\nu_\rho} \biggl[ 1\bigl(|y|\leq\delta\bigr) \biggl(E_{\nu_\rho}\bigl[f(
\eta)|y\bigr] - \biggl\{y^2 - \frac{\sigma^2(\rho)}{n} \biggr\}
\frac{\varphi_f''(\rho)}{2} \biggr)^4 \biggr] \leq C\|f\|^4_{L^2(\nu_\rho)}n^{-6}.
\]

\textit{Step} 8.
On the other hand,
by say large deviations estimates, we bound
\begin{eqnarray*}
&&E_{\nu_\rho} \biggl[ 1\bigl(|y|> \delta\bigr) \biggl(E_{\nu_\rho}\bigl[f(
\eta)|y\bigr] - \biggl\{ y^2 - \frac{\sigma^2(\rho)}{n} \biggr\}
\frac{\varphi_f''(\rho)}{2} \biggr)^4 \biggr]
\\
&&\qquad \leq C\|f\|^4_{L^5(\nu_\rho)}O\bigl(n^{-6}\bigr)
\end{eqnarray*}
to complete the proof.
\end{pf}

We now prove the equivalence ensembles estimate (EE) with respect to a
Markovian measure.
Recall the Gibbs measures $\nu_{1/2}$ and $\nu_z=\nu^{\lambda
(z)}_{1/2}$, and transition matrix $P$ defined in Section~\ref{speed_change_model}.
To see how the next proposition can be used to satisfy assumption (EE),
we note (1) the estimate is uniform in the ``outside variables'' $\eta
^c_\ell$, and (2) since the transition matrix $P$ is positive, the
$L^\infty$ norm of any local function supported on sites $\Lambda_{\ell
_0}$ can be bounded $\|f\|_{L^\infty} \leq C(\ell_0,\beta)\|f\|_{L^p(\nu
_{1/2})}$ for $p>0$. Recall also the definitions of $\varphi_f(\rho)$
and its derivatives in~\eqref{varphi_derivatives}.

%
\begin{proposition}
\label{Markov_EE}
Let $f$ be a local function, supported on sites indexed by $\Lambda
_{\ell_0}$, such that $\varphi_f(1/2)=\varphi'_f(1/2)=0$. Then, for each
$0<\varepsilon<1$, there is a constant $C=C(\ell_0,\varepsilon)$ such
that for $a,b\in\{0,1\}$ and $n\geq\ell_0$,
\begin{eqnarray*}
&& \biggl\| E_{\nu_{{1}/{2}}}\bigl[f|y, \eta(-n-1)=a,\eta(n+1)=b\bigr] -
\frac
{\varphi_f''({1}/{2})}{2} \biggl[y^2 - \frac{\sigma^2_n(
{1}/{2})}{2n+1} \biggr]
\biggr\|_{L^4(\nu_{{1}/{2}})}
\\
&&\qquad \leq \frac{C\|f\|_{L^\infty
}}{n^{3/2-\varepsilon}}.
\end{eqnarray*}

On the other hand, when only $\varphi_f(1/2)=0$ is known,
\[
\biggl\| E_{\nu_{{1}/{2}}}\bigl[f|y, \eta(-n-1)=a,\eta(n+1)=b\bigr] - y\varphi
'_f\biggl(\frac{1}{2}\biggr) \biggr\|_{L^4(\nu_{{1}/{2}})}
\leq \frac{C\|f\|
_{L^\infty}}{n^{1-\varepsilon}}.
\]
Here, $y = (2n+1)^{-1}\sum_{x\in\Lambda_n}(\eta(x) -\frac{1}{2})$.
\end{proposition}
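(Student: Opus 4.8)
The plan is to follow the scheme of the proof of Proposition~\ref{2EE}, replacing the i.i.d.\ local central limit theorem by a local central limit theorem, with one correction term, for the finite--state Markov chain underlying $\nu_{1/2}$; as in \cite{SX} this is done through a Fourier analysis of twisted transfer operators. First I would reduce the tilt. Since the event $\{y,\ \eta(-n-1)=a,\ \eta(n+1)=b\}$ fixes both $\sum_{x\in\Lambda_n}\eta(x)$ and the two boundary spins, and since, by the Markov property and the form of $\nu_z$, on that event the laws of $\eta|_{\Lambda_n}$ under $\nu_z$ and under $\nu_{1/2}$ differ by a Radon--Nikodym factor depending on $\eta$ only through $\sum_{x\in\Lambda_n}\eta(x)$, the conditional expectation in the statement is unchanged if $\nu_{1/2}$ is replaced by $\nu_z$ for any admissible $z$; I take $z=1/2+y$, i.e.\ the tilt $\lambda=\lambda(1/2+y)$. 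Under $\nu_z$, conditionally on $\eta|_{\Lambda_\ell}$ (the support of $f$) and on $a,b$, the Markov property splits the chain into two independent bridges for the transition matrix $P_{\lambda}$, and the conditional expectation of $f$ is then written as $E_{\nu_z}$ of $f$ against a ratio of normalized bridge densities of partial sums, exactly paralleling the $\theta$-ratio of Proposition~\ref{2EE}.

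The analytic core is a local limit theorem, with a first correction, for these endpoint--conditioned partial sums, uniform over $a,b\in\{0,1\}$ and over $|y|\le\delta$ for a small fixed $\delta>0$. I would introduce the twisted matrix $P_{\lambda}(t)$, with entries $P_{\lambda}(\eta,\eta')\,e^{it(\eta'-z)}$, so that $\langle e_a,\,P_{\lambda}(t)^m e_b\rangle$ is, up to normalization, the characteristic function of a bridge partial sum over an interval of length $m$. Because $P$, hence $P_{\lambda}$, is a strictly positive $2\times2$ matrix, Perron--Frobenius and analytic perturbation theory give a simple leading eigenvalue $\Lambda_{\lambda}(t)$, jointly analytic near $(t,\lambda)=(0,\lambda(1/2))$, with $\Lambda_{\lambda}(0)=1$, $\Lambda_{\lambda}'(0)=0$ (by the centering), and $\Lambda_{\lambda}''(0)=-\sigma^2(\lambda)$ the asymptotic Markovian variance; the other eigenvalue is uniformly bounded away from $1$ in modulus, and for $t\in[-\pi,\pi]\setminus\{0\}$ the spectral radius of $P_{\lambda}(t)$ is $<1$ since the increments $\eta(x)\in\{0,1\}$ generate the full lattice $\Z$. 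Integrating $e^{-itzm}\langle e_a,P_{\lambda}(t)^m e_b\rangle$ over $[-\pi,\pi]$ and Taylor--expanding $\Lambda_{\lambda}(t)^m$ about $t=0$ produces the Gaussian main term plus an explicit correction of relative order $O(m^{-1})$, the Markov analogue of \eqref{Taylor}. Feeding this into the bridge--ratio representation gives, for $|y|\le\delta$,
\[
E_{\nu_{1/2}}\bigl[f\mid y,\eta(-n-1)=a,\eta(n+1)=b\bigr]
\;=\; \kappa_0\,E_{\nu_z}[f]\;+\;\frac{\kappa_1}{\sqrt n}\,E_{\nu_z}[f\,S_\ell]\;+\;\frac{\kappa_2}{n}\,E_{\nu_z}[f\,S_\ell^2]\;+\;\varepsilon_f(n),
\]
with $S_\ell=\sum_{x\in\Lambda_\ell}(\eta(x)-z)$, $\kappa_0=1+O(n^{-1/2})$, $\kappa_1=O(n^{-1/2})$, $\kappa_2=-\tfrac{1}{2}\sigma^{-2}(\lambda)+O(n^{-1/2})$, and $|\varepsilon_f(n)|\le C\|f\|_{L^\infty}\,n^{-3/2+\varepsilon}$.

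To finish, expand $E_{\nu_z}[f]$ and $E_{\nu_z}[f\,S_\ell]$ in powers of $y$ about $y=0$. Since $\nu_z$ is again a Markov chain (transition matrix $P_{\lambda(z)}$), the maps $y\mapsto E_{\nu_z}[g]$ are smooth on $|y|\le\delta$ with derivatives bounded in terms of $\|g\|_{L^\infty}$; using $\varphi_f(1/2)=\varphi_f'(1/2)=0$ and the identities \eqref{varphi_derivatives} relating the $y$-derivatives of $E_{\nu_z}[f]$ at $y=0$ to $\varphi_f''(1/2)$ (and the corresponding expansion of $E_{\nu_z}[f\,S_\ell]$), the three displayed terms combine to $\tfrac{1}{2}\varphi_f''(1/2)\bigl(y^2-\sigma^2_n(1/2)/(2n+1)\bigr)+O\bigl(\|f\|_{L^\infty}(|y|^3+n^{-1}|y|+n^{-3/2+\varepsilon})\bigr)$, where one uses $E_{\nu_{1/2}}[y^2]=\sigma^2_n(1/2)/(2n+1)$. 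Taking fourth moments, bounding $E_{\nu_{1/2}}[y^{2p}]=O(n^{-p})$ on $\{|y|\le\delta\}$, and using a standard Markov large--deviation estimate $\nu_{1/2}(|y|>\delta)\le e^{-cn}$ together with the crude bound $|E_{\nu_{1/2}}[f\mid\cdot\,]|\le\|f\|_{L^\infty}$ on $\{|y|>\delta\}$, yields the first claimed $L^4$ bound; the case where only $\varphi_f(1/2)=0$ is known is entirely analogous, keeping one fewer order in every expansion.

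I expect the main obstacle to be the uniform endpoint--conditioned local limit theorem: one must control both the $O(m^{-1})$ correction in $\Lambda_{\lambda}(t)^m$ and the subdominant spectral contribution of $P_{\lambda}(t)$ uniformly in $a,b$ and, crucially, uniformly in the tilt $\lambda=\lambda(1/2+y)$ over $|y|\le\delta$. It is this uniformity, rather than any new mechanism, that forces the mild $n^\varepsilon$ loss relative to the sharp $n^{-3/2}$ rate of the product case in Proposition~\ref{2EE}.
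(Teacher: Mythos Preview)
Your approach mirrors the paper's: tilt to $\nu_{y+1/2}$, express the conditional expectation as an expectation against a ratio of local-limit densities, expand those densities via transfer-matrix characteristic functions, then Taylor-expand in $y$. The one substantive difference is the choice of ``inner block'': you condition on $\eta|_{\Lambda_\ell}$ (the fixed support of $f$), whereas the paper introduces an intermediate growing scale $\Lambda_{n^\chi}$ with $0<\chi<1/6$ and carries sums $\sum_{|x|\le n^\chi}(\eta(x)-y-1/2)$ through the expansion. In the paper it is precisely this growing inner block that produces the $n^\varepsilon$ loss: the third-order Taylor remainder in $y$ of $E_{\nu_{y+1/2}}\bigl[f\cdot(\sum_{|x|\le n^\chi}\bar\eta(x))^i\,\bigm|\,a,b\bigr]$ carries a factor $n^{3\chi}$ (coming from the size of the inner sum combined with the derivative formulas), and one then takes $3\chi\le\varepsilon$. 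Your diagnosis that the $n^\varepsilon$ stems from uniformity of the bridge local limit theorem in the tilt $\lambda$ is not the mechanism the paper uses; Perron--Frobenius perturbation for the $2\times 2$ twisted matrix $P_\lambda(t)$ is uniform on compact $\lambda$-sets without any polynomial loss. In fact your fixed-$\ell$ decomposition, carried through carefully, should not need the $\varepsilon$ at all and would yield the sharp $n^{-3/2}$ rate, so your route is if anything a cleaner variant of the same argument.

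One small point you glossed over: after the bridge decomposition, the inner expectation is still $E_{\nu_z}[\,\cdot\mid \eta(-n-1)=a,\eta(n+1)=b]$, not the unconditioned $E_{\nu_z}[\,\cdot\,]$ as you wrote; passing from the former to the latter (which is what lets you land on $\varphi_f''(1/2)$ via \eqref{varphi_derivatives}) costs only $O(e^{-cn})$ by exponential mixing of $P_\lambda$, but it should be said explicitly.
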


\begin{pf}
The argument has the same structure as for
Proposition~\ref{2EE}. We will concentrate on the first display for all
large $n$; the second statement has a similar argument. Since $\nu
_{1/2}$ corresponds to an ergodic finite-state Markov chain with
uniform invariant measure, it is exponentially mixing and allows
standard approximations, which are used in many steps.

\textit{Step} 1.
Let $\chi>0$ be small and $n' = n - n^\chi$. Write
\begin{eqnarray*}
&&E_{\nu_{1/2}}\bigl[f|y, \eta(-n-1)=a,\eta(n+1)=b\bigr]
\\
&& \qquad= E_{\nu_{y+1/2}}\bigl[f(\eta)|y, \eta(-n-1)=a,\eta(n+1)=b\bigr]
\\
&&\qquad = E_{\nu_{y+1/2}} \biggl[ f(\eta) \frac{\sqrt{2n+1}\theta^\chi
_{n,y,a,b}(-\sum_{x\in\Lambda_{n^\chi}} (\eta(x)-y-1/2))}{\sqrt {2n'}
\theta_{n,y,a,b}(0)}\Big|
\\
&&\hspace*{126pt}\qquad\qquad{} \eta(-n-1)=a, \eta (n+1)=b \biggr],
\end{eqnarray*}
where
\begin{eqnarray*}
\theta^\chi_{n,y,a,b}(z) &=& \sqrt{2n'}
\nu_{y+1/2} \biggl(\sum_{n^\chi<|x|\leq n}\eta (x)-y-1/2=z\Big|
\\
&&\hspace*{61pt}{}  \eta\bigl(n^\chi \bigr), \eta\bigl(-n^\chi\bigr),
\eta(-n-1)=a,\\
&&\hspace*{144pt}\eta(n+1)=b \biggr),
\\
\theta_{n,y,a,b}(z) &=& \sqrt{2n+1}\nu_{y+1/2} \biggl(\sum
_{x\in\Lambda
_n}\eta(x) - y-1/2 =z\Big|
\\
&&\hspace*{76pt}{} \eta (-n-1)=a,\eta(n+1)=b \biggr).
\end{eqnarray*}

\textit{Step} 2.
Let the characteristic function $\psi_{n,y,\chi,a,b}(t)$ for $|t|\leq
\pi$ be defined by
\[
E_{\nu_{y+1/2}} \bigl[e^{it\sum_{n^\chi<|x|\leq n}(\eta(x)-y-{1}/{2})} |\eta\bigl(n^\chi\bigr),
\eta\bigl(-n^\chi\bigr), \eta(-n-1)=a, \eta(n+1)=b \bigr].
\]

Let $\delta>0$ be such that $(\rho-\delta,\rho+\delta)\subset(0,1)$ and
sufficiently small in the following estimates. Suppose $|y|\leq\delta
$. Let also $r>0$ be a small number. We now state a few relations, and
then argue them.
First, for $|t|<r$, we claim
%
\begin{equation}
\label{claim1_step2} \psi_{n,y,\chi,a,b} \biggl(\frac{t}{\sqrt{2n'}} \biggr) =
\biggl(1-\frac
{t^2\sigma^2(y+1/2)}{2(2n')} \biggr)^{2n'} \bigl[1+O
\bigl(n'^{-1/2}\bigr) \bigr],
\end{equation}
where $\sigma^2(z) = \lim_{n\uparrow\infty} (2n+1)^{-1}E_{\nu_{z}}[(\sum_{x\in\Lambda_n}\eta(x)-z)^2]$ is the limiting variance of the
additive functional $n^{-1/2}\sum_{x=1}^n \eta(x)-z$ with respect to
measure $\nu_z$ (cf. formula in Section~\ref{speed_change_model}).

Therefore, for $|t|<r$ and $C = C(\delta,r)>0$, we have $|\psi_{n,y,\chi
,a,b}(t/\sqrt{2n'})| < \exp\{-Ct^2\}$.

Next, we claim, for $r\leq|t|\leq\pi$ that
%
\begin{equation}
\label{claim2_step2} \bigl|\psi_{n,y,\chi,a,b}(t)\bigr|< A^{2n'},
\end{equation}
where $A=A(\delta, r)<1$.

We also state a case of a local central limit theorem for ergodic
Markov chains~\cite{Kolmogorov},
\[
\lim_{n\uparrow\infty} \theta_{n,y,a,b}(0) = \frac{1}{\sqrt{2\pi
\sigma^2(y+1/2)}}.
\]

We now give an argument for the above claims which may be skipped on
first reading.
Recall $u_1$ and $v_1$ near \eqref{p_formulas} with $\lambda= \lambda
(y+1/2)$, and consider the transfer matrix:
\[
\widetilde P(s) = \left[ \matrix{
(1-u_1)e^{s(-1/2-y)}& u_1e^{s(1/2-y)}
\vspace*{2pt}\cr
v_1e^{s(-1/2-y)}& (1-v_1)e^{s(1/2-y)}}
\right].
\]
By the Markov property, one writes
%
\begin{equation}
\label{eigenvalue_decomp} \psi_{n,y,\chi,a,b}(t) = \frac{\widetilde P(it)^{n'}(\eta(-n^\chi
),a)\widetilde P(it)^{n'}(\eta(n^\chi),b)} {
\widetilde P(0)^{n'}(\eta(-n^\chi),a)\widetilde P(0)^{n'}(\eta(n^\chi),b)}.
\end{equation}

We may diagonalize $\widetilde P(it/\sqrt{m})^m = Q(it/\sqrt {m})D^m(it/\sqrt{m})Q^{-1}(it/\sqrt{m})$, for large $m$, where $D(t)$
is a diagonal matrix with eigenvalues $\d_1(t)$ and $\d_2(t)$ and
$Q(t)$ is the matrix of the corresponding eigenvectors. Of course, when
$t=0$, $1=\d_1(0)>\d_2(0) = 1-u_1-v_1$ with corresponding eigenvectors
$\langle1,1\rangle$ and $\langle u_1,-v_1\rangle$. For large $m$, $\d
_1(it/\sqrt{m})$ is the eigenvalue with maximum absolute value and is
expressed as
\begin{eqnarray*}
&&\frac{e^{-ity/\sqrt{m}}}{2} \bigl[(1-u_1)e^{-it/(2\sqrt{m})} +
(1-v_1)e^{it/(2\sqrt{m})}
\\
&&\hspace*{25pt}\qquad{} + \bigl\{ \bigl((1-u_1)e^{-it/(2\sqrt{m})} - (1-v_1)e^{it/(2\sqrt{m})}
\bigr)^2 + 4u_1v_1 \bigr\}^{1/2}
\bigr].
\end{eqnarray*}
It is not difficult to check that
\begin{eqnarray*}
\d_1'(0) & =& -ity/\sqrt{m} + it/(2\sqrt{m})
\bigl[(u_1-v_1)/(u_1+v_1)
\bigr]
\\
& =& (it/\sqrt{m})E_{\pi_1(y+1/2)}\bigl[\eta(0)-1/2-y\bigr] = 0,
\end{eqnarray*}
where $\pi_1(y+1/2)= (u_1+v_1)^{-1}\langle v_1,u_1\rangle$ is the
marginal of $\nu_{y+1/2}$ (cf. Section~\ref{speed_change_model}).
One now expands, as all quantities are smooth,
$\d_1(it/m) = 1 -(t^2/2m) \d_1''(0) + O(m^{-3/2})$
where the error is uniform for $|y|\leq\delta$ and $|t|\leq\pi$.
Similarly, $Q(it/\sqrt{m}) = Q(0) + O(m^{-1/2})$.
One can identify $\d_1''(0)$ as the variance $\sigma^2(y+1/2)$ since we know
\begin{eqnarray*}
&&E_{\nu_{y+1/2}} \bigl[e^{(it/\sqrt{m})\sum_{x=1}^m\eta(x)-y-1/2} \bigr]\\
&&\qquad=\pi_1(y+1/2)P(it/
\sqrt{m})^m\bf1
\\
&& \qquad=\bigl(1 - t^2\d_1''(0)/(2m)
\bigr)^m \bigl(1 + O\bigl(m^{-1/2}\bigr) \bigr)
\end{eqnarray*}
must converge to $e^{-t^2\sigma^2(y+1/2)/2}$. Here, $\pi_1(y+1/2)$ is
thought of as a row vector, and $\bf1$ is the column vector with
entries equal to $1$.

Putting these estimates together, we may conclude \eqref
{claim1_step2}. To verify \eqref{claim2_step2}, from equation \eqref
{eigenvalue_decomp}, we need only show the moduli $|\d_1(it)|, |\d
_2(it)|<1$ uniformly for $r\leq|t|\leq\pi$ and $|y|\leq\delta$. One
way is the following. Suppose $y=0$ and note that the moduli are less
than $1$ at $|t|=\pi$. For $r\leq|t|\leq\pi$, from the determinant of
$\widetilde P(it)$, $\d_1(it)\d_2(it) = 1-u_1-v_1$. In particular, if
$\d_1(it)$ say is of the form $e^{i\theta}$ with $|\theta|\leq\pi$,
then $\d_2(it) = e^{-i\theta}(1-u_1-v_1)$. From the trace, we obtain equation
$e^{i\theta} + e^{-i\theta}(1-u_1-v_1) = (1-u_1)e^{-it/2} +
(1-v_1)e^{it/2}$ which is absurd: The real part is $\cos(\theta
)(2-u_1-v_2) = \cos(t/2)(2-u_1-v_2)$ which yields $\theta= t/2$. But
the imaginary part is $\sin(\theta)(u_1+v_1) = \sin(t/2)(u_1-v_1)$
which is a contradiction as $r/2<|\theta| = |t|/2\leq\pi/2$ and
$v_1\neq0$ for $y=0$. Hence, by continuity, for $|y|\leq\delta$
small, we conclude the claim.

\textit{Step} 3.
Now, write
\begin{eqnarray*}
\theta^\chi_{n, y, a,b}(x) &=& \frac{\sqrt{2n'}}{2\pi} \int
_{-\pi}^\pi e^{-itx}\psi_{n,y,\chi,a,b}(t)\,dt
\\
&=& \frac{1}{2\pi}\int_{-\pi\sqrt{2n'}}^{\pi\sqrt{2n'}}
e^{-itx/\sqrt
{2n'}}\psi_{n,y,\chi,a,b}\bigl(t/\sqrt{2n'}\bigr)\,dt.
\end{eqnarray*}
The last expression is rewritten as
\begin{eqnarray*}
&&\frac{1}{2\pi}\int_{-\pi\sqrt{2n'}}^{\pi\sqrt{2n'}}
\psi_{n,y,\chi
,a,b}\bigl(t/\sqrt{2n'}\bigr)\,dt - \frac{ix}{2\pi\sqrt{2n'}}
\int_{-\pi\sqrt
{2n'}}^{\pi\sqrt{2n'}} t\psi_{n,y,\chi,a,b}\bigl(t/
\sqrt{2n'}\bigr)\,dt
\\
&&\qquad{} - \frac{x^2}{4\pi n'}\int_{-\pi\sqrt{2n'}}^{\pi\sqrt{2n'}}
t^2\psi_{n,y,\chi,a,b}\bigl(t/\sqrt{2n'}\bigr)\,dt +
r_0(x)n^{-3/2}
\end{eqnarray*}
in terms of error $r_0(x)$ which, by the estimates in step 2, is of
order $O(|x|^3)$.

The second integral in the last display is also estimated of order
$O(n'^{-1/2})$ by the same argument as given in step 3 of the proof of
Proposition~\ref{2EE}.

\textit{Step} 4.
Then, for $|y|\leq\delta$, we have
\begin{eqnarray*}
&&E_{\nu_{1/2}} \bigl[ f|y, \eta(-n-1)=a,\eta(n+1)=b \bigr]
\\
&&\qquad = \kappa_0 E_{\nu_{y+1/2}} \bigl[f(\eta)|\eta(-n-1)=a,
\eta(n+1)=b \bigr]
\\
&&\qquad\quad{} + \frac{\kappa_1}{\sqrt{2n'}}E_{\nu_{y+1/2}} \biggl[ f(\eta) \biggl(\sum
_{|x|\leq n^\chi} \eta(x)-\frac{1}{2}-y \biggr) \Big|\eta(-n-1)=a,\\
&&\hspace*{247pt}{}\eta
(n+1)=b \biggr]
\\
&&\qquad\quad{} + \frac{\kappa_2}{2n'}E_{\nu_{y+1/2}} \biggl[ f(\eta) \biggl(\sum
_{|x|\leq n^\chi}\eta(x)-\frac{1}{2}-y \biggr)^2 \Big|
\eta(-n-1)=a,\\
&&\hspace*{242pt}{}\eta (n+1)=b \biggr]
\\
&&\qquad\quad{} + \varepsilon_f(n),
\end{eqnarray*}
where $|\varepsilon_f(n)|\leq C\|f\|_{L^\infty(\nu_\rho)}n^{-3/2 +3\chi
}$ and $\kappa_i=\kappa_i(n)$ for $i=0,1,2$ have the same asymptotics
as in step 5 of the proof of Proposition~\ref{2EE}.

\textit{Step} 5. Recall the tilted measures and the formula for the tilt
$\lambda(z)$ in Section~\ref{speed_change_model}. Recall also the
definitions of $\varphi^{(i)}_f(\rho)$ \eqref{varphi_derivatives}.
For $|y|\leq\delta$ and $i=0,1,2$, using the uniform exponentially
mixing property of the measures $\{\nu_{y+1/2}\dvtx |y|\leq\delta\}$ and
$\varphi_f(1/2)=\varphi_f'(1/2)=0$,
we claim
%
\begin{eqnarray}
\label{mixing_taylor} &&E_{\nu_{y+1/2}} \biggl[f(\eta) \biggl(\sum
_{|x|\leq n^{\chi}}\eta (x)-1/2-y \biggr)^i \Big|\eta(-n-1)=a,
\eta(n+1)=b \biggr]
\nonumber
\\
&&\qquad = \frac{\lambda'(1/2)^{2-i} y^{2-i}}{(2-i)!}E_{\nu_{1/2}} \biggl[f(\eta) \biggl(\sum
_{|x|\leq n^{2\chi}}\eta(x)-1/2 \biggr)^2 \biggr]
\\
&&\qquad\quad {}+ |y|^{3-i}r_1(f,n) + r_2(f,n).\nonumber
\end{eqnarray}
Here, the error $r_1(f,n)$ stands for the error made first in Taylor
approximation around $y=0$ with respect to the conditioned measure:
Using that $\nu_{y+1/2}$ is exponentially mixing, one can bound the
first, second and third derivatives below~\eqref{first_derivative},
\eqref{second_derivative} and \eqref{third_derivative}, uniformly in
$a,b$ and $|y|\leq\delta$ after a calculation so that $|r_1(f,n)| \leq
C(\delta)n^{4\chi}\|f\|_{L^\infty}$. The error $r_2(f,n)$ represents
other errors made by exponential approximations and $|r_2(h,n)| \leq C\|
f\|_{L^\infty}n^{-3/2}$. The reader, on first reading, may like to skip
now to step 6.

Indeed, in more detail, when $i=1$,
%
\begin{eqnarray}
\label{mixing_1} &&E_{\nu_{y+1/2}} \biggl[f(\eta) \biggl(\sum
_{|x|\leq n^{\chi}}\eta (x)-1/2-y \biggr) \Big|\eta(-n-1)=a,\eta(n+1)=b \biggr]
\nonumber
\\
&&\qquad= E_{\nu_{1/2}} \biggl[f(\eta) \biggl(\sum_{|x|\leq n^{\chi}}
\eta (x)-1/2 \biggr) \Big|\eta(-n-1)=a,\eta(n+1)=b \biggr]
\\
&&\qquad\quad{} + By + y^2r_1(f,n),\nonumber
\end{eqnarray}
where, referring to the first derivative expression \eqref
{first_derivative}, $B$ equals
%
\begin{eqnarray}
\label{B_expression} && \lambda' \biggl(\frac{1}{2} \biggr)
E_{\nu_{1/2}} \biggl[f(\eta) \biggl(\sum_{|x|\leq n^{\chi}}
\eta(x)-\frac{1}{2} \biggr) \biggl(\sum_{|x|\leq
n}
\widetilde\eta(x) \biggr)\Big|
\nonumber
\\
&&\hspace*{69pt}\qquad{} \eta(-n-1)=a,\eta(n+1)=b \biggr]
\\
&&\qquad{} - 2n^\chi E_{\nu_{1/2}} \bigl[f |\eta(-n-1)=a,\eta (n+1)=b
\bigr]\nonumber
\end{eqnarray}
and $\widetilde\eta(x) = \eta(x)- E_{\nu_{1/2}}[\eta(x)|\eta
(-n-1)=a,\eta(n+1)=b]$.
The error $r_1(f,n)$ is~less than the bound on the second derivative
\eqref{second_derivative} with $h = f(\eta)\* (\sum_{|x|\leq n^{\chi
}}\eta(x)-1/2 )$ plus $2n^\chi$ times the bound on the first
derivative \eqref{first_derivative} with $h = f$.
We now bound the second derivative; estimating the first derivative is
similar. See notation $\bar h$ and $\bar\eta(x)$ above \eqref{first_derivative}.\vadjust{\goodbreak}

For $|y|\leq\delta$, from the formula for the tilt $\lambda(z)$ in
Section~\ref{speed_change_model}, the derivatives $\lambda
^{(k)}(y+1/2)$ for $k=1,2,3$ are uniformly bounded.
The expectation
$E_{\nu_{y+1/2}} [\bar h(\eta) (\sum_{|x|\leq n}\bar\eta(x)
)|\eta(-n-1)=a,\eta(n+1)=b ]$ in \eqref{second_derivative}
is handled as follows. By splitting the sum $\sum_{|x|\leq n}\bar\eta
(x)$ over indices $|x|\leq n^{2\chi}$, $n^{2\chi}<|x|\leq n-n^\chi$ and
$|x|>n-n^\chi$, and using the uniform exponentially mixing property of
$\nu_{y+1/2}$, for $|y|\leq\delta$, and that all variables $|\eta
(x)|\leq1$, one bounds this term as $O(\|f\|_{L^\infty}n^{3\chi})$.

Consider now the other term $E_{\nu_{y+1/2}} [\bar h(\eta) (\sum_{|x|\leq n}\bar\eta(x) )^2|\eta(-n-1)=a$, $\eta(n+1)=b ]$ in \eqref
{second_derivative}. Split the sum over $|x|\leq n$ into sums over
$|x|\leq n^{(1+u)\chi}$ and $|x|>n^{(1+u)\chi}$, and square to yield
three terms. Bounding the cross term is the most involved, the other
two being straightforward. The cross term is
\begin{eqnarray*}
&&2E_{\nu_{y+1/2}} \biggl[\bar h(\eta) \biggl(\sum
_{|x|\leq n^{(1+u)\chi}}\bar \eta(x) \biggr)
 \biggl(\sum_{n^{(1+u)\chi
}<|x|\leq n}\bar\eta(x) \biggr)\Big|\\
&&\hspace*{104pt}\eta(-n-1)=a,\eta(n+1)=b \biggr].
\end{eqnarray*}
By\vspace*{1pt} splitting the sum over $n^{(1+u)\chi}<|x|\leq n$ into sums on
$n^{(1+u)\chi}<\break |x|<n^{(1+2u)\chi}$, $n^{(1+2u)\chi}
\leq|x| \leq n-n^{u\chi}$ and $|x|> n-n^{u\chi}$, and using the
exponentially mixing property of $\nu_{y+1/2}$, one can bound the cross
term\break $O(\|f\|_{L^\infty}n^{(3+3u)\chi})$ which for $u<1/3$ gives the
desired error bound.

We now
relate terms in \eqref{mixing_1} to $\varphi_f'(1/2)$ and $\varphi
_f''(1/2)$ [cf. \eqref{varphi_derivatives}], using the exponentially
mixing property. It is straightforward that the difference between the
first conditional expectation on the right-hand side of \eqref
{mixing_1} and $\lambda'(1/2)^{-1}\varphi_f'(1/2)=0$ is exponentially
close. Also, as $\varphi'_f(1/2)=0$, the first conditional expectation
in the expression $B$ in \eqref{B_expression} is exponentially close to
$(\lambda'(1/2))^{-1}\varphi_f''(1/2)$, which in turn is exponentially
close to the expectation on the right-hand side of \eqref
{mixing_taylor}. The other expectation in $B$ is exponentially small.

The cases $i=0,2$ with respect to equation \eqref{mixing_taylor},
are argued analogously.

Here, for functions $h$ supported on sites in $\Lambda_{n^\chi}$, and notation
\begin{eqnarray*}
\bar h(\eta) &=& h(\eta) - E_{\nu_{y+1/2}}\bigl[h|\eta(-n-1)=a,\eta(n+1)=b
\bigr] \quad\mbox{and }
\\
\bar\eta(x) &=& \eta(x) - E_{\nu_{y+1/2}}\bigl[\eta(x)|\eta(-n-1)=a,
\eta(n+1)=b\bigr],
\end{eqnarray*}
the first derivative is
%
\begin{eqnarray}
\label{first_derivative} && \frac{d}{dy}E_{\nu_{y+{1}/{2}}}\bigl[h(\eta)|
\eta(-n-1)=a, \eta (n+1)=b\bigr]
\nonumber
\\
&&\qquad = \lambda'\biggl(y+\frac{1}{2}\biggr) \\
&&\qquad\quad{}\times E_{\nu_{y+{1}/{2}}}
\biggl[\bar h(\eta ) \biggl(\sum_{|x|\leq n}\bar\eta(x)
\biggr) \Big| \eta(-n-1)=a, \eta (n+1)=b \biggr].\nonumber
\end{eqnarray}
The second derivative is
%
\begin{eqnarray}
\label{second_derivative} &&\frac{d^2}{dy^2} E_{\nu_{y+{1}/{2}}} \bigl[h(\eta) |
\eta(-n-1)=a, \eta(n+1)=b \bigr]
\nonumber
\\
&&\qquad = \lambda''\biggl(y+\frac{1}{2}
\biggr)E_{\nu_{y+{1}/{2}}} \biggl[\bar h(\eta) \biggl(\sum
_{|x|\leq n}\bar\eta(x) \biggr) \Big| \eta(-n-1)=a, \eta (n+1)=b \biggr]
\nonumber
\\[-8pt]
\\[-8pt]
\nonumber
&&\qquad\quad{} + \biggl(\lambda'\biggl(y+\frac{1}{2}\biggr)
\biggr)^2 \\
&&\qquad\qquad{}\times E_{\nu_{y+{1}/{2}}} \biggl[\bar h(\eta) \biggl(\sum
_{|x|\leq n} \bar\eta(x) \biggr)^2 \Big| \eta(-n-1)=a, \eta(n+1)=b \biggr].\nonumber
\end{eqnarray}
The third derivative is
%
\begin{eqnarray}
\label{third_derivative}
&&\frac{d^3}{dy^3} E_{\nu_{y+{1}/{2}}} \bigl[h(\eta)|
\eta(-n-1)=a,\eta (n+1)=b \bigr]
\nonumber
\\
&&\qquad = \lambda'''\biggl(y+
\frac{1}{2}\biggr)E_{\nu_{y+{1}/{2}}} \biggl[\bar h(\eta ) \biggl(\sum
_{|x|\leq n}\bar\eta(x) \biggr)\Big |\eta(-n-1)=a,\eta(n+1)=b
\biggr]
\nonumber
\\
&&\qquad\quad{} +3\lambda'\biggl(y+\frac{1}{2}\biggr)
\lambda''\biggl(y+\frac{1}{2}\biggr)
\nonumber
\\
&&\qquad\qquad{} \times E_{\nu_{y+{1}/{2}}} \biggl[\bar h(\eta ) \biggl(\sum
_{|x|\leq n}\bar\eta(x) \biggr)^2 \Big|\eta(-n-1)=a,\eta
(n+1)=b \biggr]\nonumber
\\
&& \qquad\quad{}+\biggl(\lambda'\biggl(y+\frac{1}{2}\biggr)
\biggr)^3 \\
&&\qquad\qquad{}\times E_{\nu_{y+{1}/{2}}} \biggl[\bar h(\eta) \biggl(\sum
_{|x|\leq n}\bar\eta(x) \biggr)^3 \Big|\eta(-n-1)=a,\eta
(n+1)=b \biggr]
\nonumber
\\
&&\qquad\quad{} -3\biggl(\lambda'\biggl(y+\frac{1}{2}\biggr)
\biggr)^3\nonumber\\
&&\qquad\qquad{}\times E_{\nu_{y+{1}/{2}}} \biggl[\bar h(\eta) \biggl(\sum
_{|x|\leq n}\bar\eta(x) \biggr)\Big |\eta(-n-1)=a,\eta (n+1)=b \biggr]
\nonumber
\\
&&\qquad\qquad{}\times E_{\nu_{y+{1}/{2}}} \biggl[ \biggl(\sum_{|x|\leq n}
\bar\eta(x) \biggr)^2\Big |\eta(-n-1)=a,\eta (n+1)=b \biggr].
\nonumber
\end{eqnarray}

\textit{Step} 6.
By the exponentially mixing property of $\nu_{1/2}$ and the assumption
$\varphi_f(1/2)=\varphi'_f(1/2)=0$ [cf. \eqref{varphi_derivatives}], we have
\[
\lambda'(1/2)^2E_{\nu_{1/2}} \biggl[f(\eta)
\biggl(\sum_{|x|\leq n^{2\chi
}}\eta(x)-1/2 \biggr)^2
\biggr] = \varphi_f''(1/2) + O
\bigl(n^{-3/2}\bigr).
\]

Also, note the relation $\lambda'(1/2)\sigma^2(1/2) = 1$ (cf.
Section~\ref{speed_change_model}), and by exponential mixing that
$|\sigma^2(1/2) - \sigma^2_n(1/2)| = O(n^{-1})$. Recall the asymptotic
behaviors of $\kappa_0$, $\kappa_1$ and $\kappa_2$ (cf. step 5 of proof
of Proposition~\ref{2EE}). In addition, a factor $n^{4\chi} y$ is of
order $O(n^{-(1/2-4\chi)})$ in $L^4(\nu_{1/2})$. Hence, with the
parameter $\chi$ chosen small enough, dominant terms may be gathered,
as done in the proof of Proposition~\ref{2EE}, to obtain for all large
$n$ that
\begin{eqnarray*}
&& \biggl\|1\bigl(|y|\leq\delta\bigr) \biggl(E_{\nu_{1/2}}\bigl[f|y,\eta(-n-1)=a,\eta
(n+1)=b\bigr]
\\
&&\hspace*{89pt}\qquad{} - \frac{\varphi_f''(1/2)}{2} \biggl[y^2 - \frac
{\sigma^2_n(1/2)}{2n+1} \biggr]
\biggr) \biggr\|_{L^4(\nu_{1/2})} \\
&&\qquad\leq C\|f\| _{L^\infty} n^{-3/2 + \varepsilon}.
\end{eqnarray*}

On the other hand, large deviation estimates yield
\begin{eqnarray*}
&& \biggl\|1\bigl(|y|>\delta\bigr) \biggl(E_{\nu_{1/2}}\bigl[f|y,\eta(-n-1)=a,\eta(n+1)=b
\bigr]
\\
&&\hspace*{90pt}\qquad{} - \frac{\varphi_f''(1/2)}{2} \biggl[y^2 - \frac{\sigma
^2_n(1/2)}{2n+1} \biggr]
\biggr)\biggr \|_{L^4(\nu_{1/2})} \\
&&\qquad\leq C\|f\| _{L^\infty}n^{-3/2}.
\end{eqnarray*}
\upqed\end{pf}

\section*{Acknowledgments}
P. Gon\c{c}alves and M. Jara are grateful to Capes\break (Brazil) and FCT (Portugal)
for the
research project ``Non-Equilibrium Statistical Mechanics of Stochastic
Lattice Systems'' no. FCT 291/11.
Also, P. Gon\c{c}alves thanks the Research Centre of
Mathematics, University of Minho, and ``Feder'' for support of the
``Programa Operacional Factores de Competiti\-vidade---COMPETE.''
Thanks also to the Editors and referees for their constructive comments.


%



\printaddresses

\end{document}